\begin{document}
\newtheorem{Def}{Definition}[section]
\newtheorem{Bsp}[Def]{Example}
\newtheorem{Prop}[Def]{Proposition}
\newtheorem{Theo}[Def]{Theorem}
\newtheorem{Lem}[Def]{Lemma}
\newtheorem{Koro}[Def]{Corollary}
\theoremstyle{definition}
\newtheorem{Rem}[Def]{Remark}

\newcommand{\add}{{\rm add}}
\newcommand{\con}{{\rm con}}
\newcommand{\gd}{{\rm gldim}}
\newcommand{\qgd}{{\rm qgldim}}
\newcommand{\sd}{{\rm stdim}}
\newcommand{\sr}{{\rm sr}}
\newcommand{\dm}{{\rm domdim}}
\newcommand{\cdm}{{\rm codomdim}}
\newcommand{\tdim}{{\rm dim}}
\newcommand{\E}{{\rm E}}
\newcommand{\Mor}{{\rm Morph}}
\newcommand{\End}{{\rm End}}
\newcommand{\ind}{{\rm ind}}
\newcommand{\rsd}{{\rm resdim}}
\newcommand{\rd} {{\rm rd}}
\newcommand{\ol}{\overline}
\newcommand{\overpr}{$\hfill\square$}
\newcommand{\rad}{{\rm rad}}
\newcommand{\soc}{{\rm soc}}
\renewcommand{\top}{{\rm top}}
\newcommand{\pd}{{\rm pd}}
\newcommand{\id}{{\rm idim}}
\newcommand{\fld}{{\rm fdim}}
\newcommand{\Fac}{{\rm Fac}}
\newcommand{\Gen}{{\rm Gen}}
\newcommand{\fd} {{\rm findim}}
\newcommand{\qpd} {{\rm qpd}}
\newcommand{\Fd} {{\rm Findim}}
\newcommand{\Pf}[1]{{\mathscr P}^{<\infty}(#1)}
\newcommand{\DTr}{{\rm DTr}}
\newcommand{\cpx}[1]{#1^{\bullet}}
\newcommand{\D}[1]{{\mathscr D}(#1)}
\newcommand{\Dz}[1]{{\mathscr D}^+(#1)}
\newcommand{\Df}[1]{{\mathscr D}^-(#1)}
\newcommand{\Db}[1]{{\mathscr D}^b(#1)}
\newcommand{\C}[1]{{\mathscr C}(#1)}
\newcommand{\Cz}[1]{{\mathscr C}^+(#1)}
\newcommand{\Cf}[1]{{\mathscr C}^-(#1)}
\newcommand{\Cb}[1]{{\mathscr C}^b(#1)}
\newcommand{\Dc}[1]{{\mathscr D}^c(#1)}
\newcommand{\K}[1]{{\mathscr K}(#1)}
\newcommand{\Kz}[1]{{\mathscr K}^+(#1)}
\newcommand{\Kf}[1]{{\mathscr  K}^-(#1)}
\newcommand{\Kb}[1]{{\mathscr K}^b(#1)}
\newcommand{\DF}[1]{{\mathscr D}_F(#1)}
\newcommand{\Kac}[1]{{\mathscr K}_{\rm ac}(#1)}
\newcommand{\Keac}[1]{{\mathscr K}_{\mbox{\rm e-ac}}(#1)}
\newcommand{\modcat}{\ensuremath{\mbox{{\rm -mod}}}}
\newcommand{\Modcat}{\ensuremath{\mbox{{\rm -Mod}}}}
\newcommand{\Spec}{{\rm Spec}}
\newcommand{\stmc}[1]{#1\mbox{{\rm -{\underline{mod}}}}}
\newcommand{\Stmc}[1]{#1\mbox{{\rm -{\underline{Mod}}}}}
\newcommand{\prj}[1]{#1\mbox{{\rm -proj}}}
\newcommand{\inj}[1]{#1\mbox{{\rm -inj}}}
\newcommand{\Prj}[1]{#1\mbox{{\rm -Proj}}}
\newcommand{\Inj}[1]{#1\mbox{{\rm -Inj}}}
\newcommand{\PI}[1]{#1\mbox{{\rm -Prinj}}}
\newcommand{\GP}[1]{#1\mbox{{\rm -GProj}}}
\newcommand{\GI}[1]{#1\mbox{{\rm -GInj}}}
\newcommand{\gp}[1]{#1\mbox{{\rm -Gproj}}}
\newcommand{\gi}[1]{#1\mbox{{\rm -Ginj}}}
\newcommand{\opp}{^{\rm op}}
\newcommand{\otimesL}{\otimes^{\rm\mathbb L}}
\newcommand{\rHom}{{\rm\mathbb R}{\rm Hom}\,}
\newcommand{\pdim}{\pd}
\newcommand{\Hom}{{\rm Hom}}
\newcommand{\Coker}{{\rm Coker}}
\newcommand{ \Ker  }{{\rm Ker}}
\newcommand{ \Cone }{{\rm Con}}
\newcommand{ \Img  }{{\rm Im}}
\newcommand{\Ext}{{\rm Ext}}
\newcommand{\StHom}{{\rm \underline{Hom}}}
\newcommand{\StEnd}{{\rm \underline{End}}}
\newcommand{\KK}{I\!\!K}
\newcommand{\gm}{{\rm _{\Gamma_M}}}
\newcommand{\gmr}{{\rm _{\Gamma_M^R}}}
\def\vez{\varepsilon}\def\bz{\bigoplus}  \def\sz {\oplus}
\def\epa{\xrightarrow} \def\inja{\hookrightarrow}
\newcommand{\lra}{\longrightarrow}
\newcommand{\llra}{\longleftarrow}
\newcommand{\lraf}[1]{\stackrel{#1}{\lra}}
\newcommand{\llaf}[1]{\stackrel{#1}{\llra}}
\newcommand{\ra}{\rightarrow}
\newcommand{\dk}{{\rm dim_{_{k}}}}
\newcommand{\holim}{{\rm Holim}}
\newcommand{\hocolim}{{\rm Hocolim}}
\newcommand{\colim}{{\rm colim\, }}
\newcommand{\limt}{{\rm lim\, }}
\newcommand{\Add}{{\rm Add }}
\newcommand{\Prod}{{\rm Prod }}
\newcommand{\Tor}{{\rm Tor}}
\newcommand{\Cogen}{{\rm Cogen}}
\newcommand{\Tria}{{\rm Tria}}
\newcommand{\Loc}{{\rm Loc}}
\newcommand{\Coloc}{{\rm Coloc}}
\newcommand{\tria}{{\rm tria}}
\newcommand{\Con}{{\rm Con}}
\newcommand{\Thick}{{\rm Thick}}
\newcommand{\thick}{{\rm thick}}
\newcommand{\Sum}{{\rm Sum}}
\newcommand{\dep}{{\rm depth}}

\begin{center}
{\Large {\bf Properties of quasi-projective dimension over abelian categories}}
\end{center}

\centerline{\textbf{Hongxing Chen}$^*$, \textbf{Xiaohu Chen} and \textbf{Mengge Liu}}

\renewcommand{\thefootnote}{\alph{footnote}}
\setcounter{footnote}{-1} \footnote{ $^*$ Corresponding author.
Email: chenhx@cnu.edu.cn; Fax: 0086 10 68903637.}
\renewcommand{\thefootnote}{\alph{footnote}}
\setcounter{footnote}{-1} \footnote{2020 Mathematics Subject
Classification: Primary 16E10, 18G20, 16G10; Secondary 18E10, 16E35.}
\renewcommand{\thefootnote}{\alph{footnote}}
\setcounter{footnote}{-1} \footnote{Keywords: Quasi-global dimension; Quasi-projective dimension; Nakayama algebra; Self-orthogonal module.}
\begin{abstract}

Quasi-projective dimension was introduced by Gheibi, Jorgensen and Takahashi to generalize the Auslander-Buchsbaum formula and the depth formula in commutative algebra. In this paper, we establish some basic properties of quasi-projective dimensions of objects in abelian categories. Analogous to global dimension of rings, we also introduce the concept of quasi-global dimension for left Noetherian rings, and then compare quasi-global dimension with global dimension for a class of Nakayama algebras. This provides new examples of finite-dimensional algebras with finite quasi-global dimensions but infinite global dimensions.

\end{abstract}


\section{Introduction}\label{Introduction}
In the representation theory of algebras and homological algebra, projective dimensions of objects in abelian categories  plays a very important role. In \cite{QPD}, Gheibi, Jorgensen and Takahashi introduced a generalization of projective dimension, called \emph{quasi-projective dimension}. This new dimension not only supplies a general framework to establish the Auslander–Buchsbaum formula and the depth formula in commutative algebra for modules of finite quasi-projective dimension (see \cite{2024}), but also gives an equivalent characterization of equipresented, local, complete intersection rings in term of finite quasi-projective dimension (see \cite{2021}). Dual to quasi-projective dimensions, quasi-injective dimensions of modules over rings were introduced to provide extensions of the Bass's formula and the Chouinard's formula in commutative algebra for modules of finite quasi-injective dimensions (see \cite{G2024,NMT}).

We start from recalling some elementary properties of quasi-projective dimensions of objects in abelian categories established in \cite{QPD}. \emph{Throughout this section, let $\mathcal{A}$ be an arbitrary abelian category with enough projective objects}. For each object $M\in\mathcal{A}$, we denote by ${\pd}_{\mathcal{A}}\,M$, $\qpd_{\mathcal{A}}\,M$ and $\Omega^n(M)$ its projective dimension, quasi-projective dimension and $n$-th syzygy for $n\geqslant 0$ in $\mathcal{A}$, respectively. For the definitions of quasi-projective resolution and quasi-projective dimension, we refer to Definition \ref{def3.1(1)}.

\begin{Theo}{\rm \cite[Propositions 3.3 and 3.6(2)]{QPD}}\label{QPP}
The following statements hold for an object $M\in\mathcal{A}$.

$(1)$ $\qpd_{\mathcal{A}}(M\oplus N)\leqslant\sup\{\qpd_{\mathcal{A}}\,M,\,\qpd_{\mathcal{A}}\,N\}$ for any object $N\in\mathcal{A}$. In particular, if $N$ is projective, then $\qpd_{\mathcal{A}}\,(M\oplus N)\leqslant\qpd_{\mathcal{A}}\,M$.

$(2)$ $\qpd_{\mathcal{A}}\,\Omega(M)\leqslant\qpd_{\mathcal{A}}\,M$.

$(3)$ If $M$ is periodic, that is, there exists a positive integer $r$ such that $\Omega^r(M)\simeq M$, then $\qpd_{\mathcal{A}}\,M=0$.
\end{Theo}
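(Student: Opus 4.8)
The plan is to convert the periodicity $\Omega^r(M)\simeq M$ into an explicit quasi-projective resolution of $M$ that is visibly a witness for quasi-projective dimension $0$. Fix any projective resolution $\cdots\to P_2\xrightarrow{d_2}P_1\xrightarrow{d_1}P_0\xrightarrow{\varepsilon}M\to 0$ in $\mathcal{A}$, so that $\Omega^r(M)=\Ker(d_{r-1})=\Img(d_r)$ is a subobject of $P_{r-1}$ via a monomorphism $\iota$, and let $\theta\colon M\xrightarrow{\ \sim\ }\Omega^r(M)$ be the given isomorphism. Splicing the truncated complex $P_{r-1}\xrightarrow{d_{r-1}}\cdots\xrightarrow{d_1}P_0$ to itself infinitely often along $\theta$ produces an $r$-periodic complex of projective objects, concentrated in non-negative degrees,
$$U\colon\quad \cdots\xrightarrow{\ \delta\ }P_{r-1}\xrightarrow{d_{r-1}}\cdots\xrightarrow{d_1}P_0\xrightarrow{\ \delta\ }P_{r-1}\xrightarrow{d_{r-1}}\cdots\xrightarrow{d_1}P_0\to 0,$$
where $U_{kr+j}=P_j$ for $k\geqslant 0$ and $0\leqslant j\leqslant r-1$, the differentials inside each length-$r$ block are the $d_j$, and the connecting map at every seam is $\delta:=\iota\,\theta\,\varepsilon\colon P_0\twoheadrightarrow M\xrightarrow{\theta}\Omega^r(M)\hookrightarrow P_{r-1}$. (When $\mathcal{A}$ has projective covers one gets such a $U$ for free, since a minimal projective resolution of $M$ is automatically $r$-periodic once $\Omega^r(M)\simeq M$.)

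Next I would verify that $U$ is indeed a quasi-projective resolution of $M$, i.e.\ that $\mathrm{H}_i(U)=0$ for $i>0$ and $\mathrm{H}_0(U)\cong M$. It is a complex since $\varepsilon d_1=0$ and $d_{r-1}\iota=0$; away from the seams, exactness of $U$ is just exactness of $P_\bullet$; at a seam, $\Img(\delta)=\iota\theta(M)=\iota\bigl(\Omega^r(M)\bigr)=\Ker(d_{r-1})$ because $\varepsilon$ is epi and $\theta$ is an isomorphism, which kills the homology just above the seam, while one degree lower, $\Ker(\delta)=\Ker(\varepsilon)=\Img(d_1)$ because $\iota$ and $\theta$ are mono, which kills it there; and $\mathrm{H}_0(U)=\Coker(d_1)\cong M$. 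Since $U$ is assembled from the finitely many projectives $P_0,\dots,P_{r-1}$ repeated $r$-periodically from degree $0$ onwards, it is, by Definition~\ref{def3.1(1)}, a quasi-projective resolution witnessing $\qpd_{\mathcal{A}}M\leqslant 0$, the periodic pattern playing the role of a quasi-projective resolution of length zero; and since quasi-projective dimension is non-negative, $\qpd_{\mathcal{A}}M=0$. If Definition~\ref{def3.1(1)} is formulated in terms of $M^{\oplus a}$, then running the same construction on $P_\bullet^{\oplus a}$, together with the induced isomorphism $M^{\oplus a}\simeq\Omega^r(M^{\oplus a})$ and Theorem~\ref{QPP}(1), gives the conclusion.

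The one place that needs care is the homology bookkeeping at the seams: one must fix the identifications of $\Omega(M)$ and $\Omega^r(M)$ with the appropriate kernels and images inside $P_\bullet$, and then check, degree by degree, that inserting $\delta$ --- which composes the canonical epimorphism off $P_0$, the isomorphism $\theta$, and the canonical monomorphism into $P_{r-1}$ --- creates no new homology and satisfies $\delta\,d_1=0=d_{r-1}\,\delta$. Everything else, namely the periodic reindexing and the matching of differentials, is routine. A secondary, expository point is to present $U$ in precisely the form that Definition~\ref{def3.1(1)} demands --- either the unbounded $r$-periodic complex above, or the equivalent bounded complex $P_{r-1}\to\cdots\to P_0$ of length $r-1$ with homology $M$ at both ends and $0$ in between --- and to settle whether passing to $M^{\oplus a}$ is genuinely necessary.
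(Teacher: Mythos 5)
This theorem is quoted in the paper from \cite[Propositions 3.3 and 3.6(2)]{QPD} without proof, so there is no in-paper argument to compare against; your proposal can only be assessed on its own terms. The first issue is coverage: you prove only part $(3)$. Parts $(1)$ and $(2)$ are part of the statement and are not addressed at all.

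For part $(3)$, the central construction is sound but the object you present as the witness does not qualify under Definition \ref{def3.1(1)}: quasi-projective dimension is an infimum over \emph{finite} (i.e.\ bounded) quasi-projective resolutions, and your spliced complex $U$ has $\sup(U)=\infty$, so the sentence ``it is, by Definition \ref{def3.1(1)}, a quasi-projective resolution witnessing $\qpd_{\mathcal{A}}M\leqslant 0$'' is not valid as written --- $U$ contributes nothing to the infimum. The correct witness is the bounded complex you mention only parenthetically at the end, namely the brutal truncation $0\to P_{r-1}\to\cdots\to P_0\to 0$ of the chosen projective resolution, placed in degrees $r-1,\dots,0$: it has $H_0\simeq M$, $H_{r-1}=\Ker(d_{r-1})=\Omega^r(M)\simeq M$, and $H_j=0$ otherwise, hence $\sup-\mathrm{hsup}=(r-1)-(r-1)=0$ and $\qpd_{\mathcal{A}}M=0$. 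This truncation should be the main object of the proof; once it is, the infinite splicing and the discussion of $M^{\oplus a}$ (unnecessary, since all $n_j\in\{0,1\}$ here) become superfluous. Finally, watch the degenerate case $r=1$: there the truncation collapses to $0\to P_0\to 0$, whose $H_0$ is $P_0$ rather than $M$, so you must retain one seam and take $0\to P_0\lraf{\delta}P_0\to 0$ with $\delta=\iota\theta\varepsilon$, which has $H_1=\Ker(\varepsilon)=\Omega(M)\simeq M$ and $H_0=\Coker(\delta)\simeq M$. With these repairs, part $(3)$ is correct.
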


From Theorem \ref{QPP}, we can see some obvious differences between projective dimension and quasi-projective dimension. For example, the inequalities in Theorem \ref{QPP}(1) are equalities if quasi-projective dimension is replaced by projective dimension, while a periodic, non-projective object has infinite projective dimension.

The paper is devoted to providing more homological properties for quasi-projective dimensions of objects in abelian categories. Our main result reads as follows.

\begin{Theo}\label{main1.1}
Let $\mathcal{A}$ be an abelian category with enough projective objects. The following statements hold for an object $M\in\mathcal{A}$.

$(1)$ If ${\pd}_{\mathcal{A}}\,M<\infty$, then $\qpd_{\mathcal{A}}\,M={\pd}_{\mathcal{A}}\,M$.

$(2)$ If $\qpd_{\mathcal{A}}\,M<\infty$ and $\Ext_\mathcal{A}^n(M,M)=0$ for all $n\geqslant 2$, then ${\rm pd}_{\mathcal{A}}\,M<\infty$.

$(3)$ If $0\to N\to E\to M\to 0$ is an exact sequence in ${\mathcal{A}}$ such that $E$ is projective-injective, then $$\qpd_{\mathcal{A}}\,M\leqslant\qpd_{\mathcal{A}}\, N+1.$$

$(4)$ Let $P_{\bullet}$ be a deleted projective resolution of $M$ in $\mathcal{A}$. Suppose that there is an integer $n\geqslant 2$ and a chain map $g_{\bullet}:P_{\bullet}\to P_{\bullet}[n]$ of complexes over $\mathcal{A}$:
$$\begin{tikzcd}
\cdots \arrow[r] & P_{n+1} \arrow[r] \arrow[d, "g_{n+1}"] & P_n \arrow[r] \arrow[d, "g_n"] & P_{n-1} \arrow[r] \arrow[d, "0"]  & \cdots \arrow[r] & P_0 \arrow[r] \arrow[d, "0"] & 0 \\
\cdots \arrow[r] & P_1 \arrow[r]                          & P_0 \arrow[r]                  & 0 \arrow[r]                 & \cdots \arrow[r] & 0 \arrow[r]             & 0
\end{tikzcd}$$
inducing an isomorphism $\Omega^{n+m}(M)\to\Omega^m(M)$ for some integer $m\geqslant 0$. Then $\qpd_{\mathcal{A}}\,M\leqslant m$.
\end{Theo}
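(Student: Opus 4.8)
The plan is to prove part (4) first, treating its chain self-map as the basic object, and then to reduce (1), (2) and (3) to it using two elementary facts about the stable category $\underline{\mathcal A}$ of $\mathcal A$ modulo projectives: that the syzygy $\Omega$ is a functor on $\underline{\mathcal A}$ (hence preserves isomorphisms), and that homotopy classes of chain maps $P_\bullet\to P_\bullet[j]$ are identified with $\Ext_{\mathcal A}^j(M,M)$, with composition corresponding to the Yoneda product. For part (4) itself I would unpack Definition \ref{def3.1(1)}: the map $g_\bullet$ together with the isomorphism $\Omega^{n+m}(M)\to\Omega^m(M)$ it induces is exactly the data needed to fold $P_\bullet$ beyond degree $m$ into a quasi-projective resolution of length at most $m$. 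Concretely, I would splice the finite piece $P_0,\dots,P_{m-1}$ onto the $n$-periodic continuation produced by $g_\bullet$ and verify that the resulting complex of projectives has the homology required by Definition \ref{def3.1(1)}; this gives $\qpd_{\mathcal A}M\leqslant m$. Checking that the folded complex is a genuine quasi-projective resolution is the technical step of part (4).

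For part (1), the inequality $\qpd_{\mathcal A}M\leqslant\pd_{\mathcal A}M$ is clear, since a projective resolution of finite length is in particular a quasi-projective resolution. For the reverse, set $d=\pd_{\mathcal A}M<\infty$ and $q=\qpd_{\mathcal A}M$, and take a self-map $g_\bullet:P_\bullet\to P_\bullet[n]$ realizing $q$ as in part (4), inducing an isomorphism $\Omega^{n+q}(M)\to\Omega^q(M)$ in $\underline{\mathcal A}$. Because $\Omega$ is a functor on $\underline{\mathcal A}$, this self-map induces isomorphisms $\Omega^{n+i}(M)\to\Omega^i(M)$ for every $i\geqslant q$; composing them, the iterate $g_\bullet^{\,k}$ induces an isomorphism $\Omega^{kn+q}(M)\to\Omega^q(M)$ for all $k\geqslant1$. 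Choosing $k$ with $kn+q\geqslant d$ makes the source projective, hence zero in $\underline{\mathcal A}$, so $\Omega^q(M)$ is projective and $d\leqslant q$. Together with the easy inequality this forces $q=d$.

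Part (2) runs on the same toolkit. Let $q=\qpd_{\mathcal A}M<\infty$ be realized by $g_\bullet:P_\bullet\to P_\bullet[n]$ with $n\geqslant1$, again inducing isomorphisms $\Omega^{n+i}(M)\to\Omega^i(M)$ for all $i\geqslant q$. The square $g_\bullet^{\,2}:P_\bullet\to P_\bullet[2n]$ represents the Yoneda square of $[g_\bullet]$ in $\Ext_{\mathcal A}^{2n}(M,M)$, which vanishes by hypothesis because $2n\geqslant2$; hence $g_\bullet^{\,2}$ is null-homotopic and induces the zero map on every syzygy in $\underline{\mathcal A}$. On the other hand $g_\bullet^{\,2}$ induces the isomorphism $\Omega^{2n+q}(M)\to\Omega^q(M)$ (a composite of the isomorphisms above), so this isomorphism is simultaneously the zero map in $\underline{\mathcal A}$; therefore $\Omega^q(M)$ is projective and $\pd_{\mathcal A}M\leqslant q<\infty$.

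For part (3), the projectivity of $E$ lets me splice a deleted projective resolution $Q_\bullet$ of $N$ onto $E$ to obtain a deleted projective resolution $\cdots\to Q_0\to E\to0$ of $M$, so that $\Omega^{i+1}(M)\cong\Omega^i(N)$ for all $i\geqslant0$. Assuming $\qpd_{\mathcal A}N=q<\infty$ (otherwise there is nothing to prove), and after replacing the realizing self-map of $Q_\bullet$ by a power so that its degree $n$ is $\geqslant2$, I would transport it to a chain self-map of the spliced resolution of $M$: all components above the bottom are inherited from the map on $Q_\bullet$, while the single new component landing in $E$ is obtained by extending a map defined on a subobject of $Q_{n-1}$ along $E$, which is exactly where the injectivity of $E$ enters. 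The transported map induces an isomorphism $\Omega^{n+q+1}(M)\to\Omega^{q+1}(M)$, and part (4) then gives $\qpd_{\mathcal A}M\leqslant q+1=\qpd_{\mathcal A}N+1$. The main obstacle throughout is the engine behind part (2): one must verify in the abelian-category setting that powers of the chain self-map compute Yoneda powers in $\Ext_{\mathcal A}^*(M,M)$ and that a null-homotopic self-map induces the zero map on all syzygies modulo projectives, so that the periodicity isomorphism collapses and forces projectivity; the folding construction underlying part (4) is the other technical hotspot.
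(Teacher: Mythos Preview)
Your entire strategy rests on an unproved converse to part (4): you repeatedly write ``take a self-map $g_\bullet:P_\bullet\to P_\bullet[n]$ realizing $q$ as in part (4)'', but part (4) only says that \emph{if} such a self-map exists inducing an isomorphism $\Omega^{n+m}(M)\to\Omega^m(M)$, \emph{then} $\qpd_{\mathcal A}M\leqslant m$. Nothing in Definition \ref{def3.1(1)} or in the statement of (4) guarantees that $\qpd_{\mathcal A}M=q$ is always witnessed this way. A finite quasi-projective resolution is an arbitrary bounded complex of projectives whose homologies are powers $M^{a_i}$ with possibly varying $a_i$ and possibly many nonzero degrees; there is no reason it should arise as (or be replaceable by) the mapping cone of a self-map of a fixed projective resolution of $M$, and hence no reason $\Omega^q(M)$ should be periodic in $\underline{\mathcal A}$. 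Without that converse, your arguments for (1), (2) and (3) have no starting point: in each case the very first move is to produce a self-map that you have not shown exists.

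The paper proceeds quite differently, working directly with an arbitrary finite quasi-projective resolution rather than a self-map. For (1) it extracts from such a resolution an object $N$ with $\pd_{\mathcal A}N=\qpd_{\mathcal A}M$ and shows, via derived functors, that the top nonvanishing $\Ext_{\mathcal A}^*(M,-)$ is inherited by $N$ (Proposition \ref{general 3}). For (2) the Ext-vanishing hypothesis is used to show that a finite quasi-projective resolution $P_\bullet$ splits in $\mathscr D^{\rm b}(\mathcal A)$ as $\bigoplus_i H_i(P_\bullet)[i]$ (Lemma \ref{finitesum}), which then forces $M$ to lie in $\mathscr K^{\rm b}(\mathcal P)$ since $P_\bullet$ does. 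For (3) the injectivity of $E$ is used to build a chain map from an arbitrary finite quasi-projective resolution of $N$ into a complex of copies of $E$ with zero differentials; its mapping cone is a finite quasi-projective resolution of $M$ of the right length. Your folding idea for (4) itself is essentially correct and matches the paper's mapping-cone argument; the failure is in treating (4) as if it \emph{characterised} quasi-projective dimension rather than merely bounding it.
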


Note that Theorem \ref{main1.1}(1) and Theorem \ref{main1.1}(2) have been shown for quasi-projective dimensions of finitely generated modules over commutative Noetherian rings by using some techniques from commutative algebra; see \cite[Corollary 4.10 and Theorem 6.20]{QPD}. Theorem \ref{main1.1}(3)(4) are completely new (even in the case of commutative rings) and provide two effective methods for bounding quasi-projective dimensions (see Example \ref{newcase}). Note that Theorem \ref{main1.1}(3) does not hold when the object $E$ is projective but not injective; see Example \ref{Counterexample} for a counterexample.

An interesting application of Theorem \ref{main1.1}(2) is the validity of \emph{the Auslander–Reiten conjecture}  for modules of finite quasi-projective dimension over left Noetherian rings. Recall that the conjecture says that if a finitely generated $R$-module $M$ over a left Noetherian ring $R$ satisfies $\Ext_R^{n}(M,M\oplus R)=0$ for all $n\geqslant 1$, then it is projective.

We formulate the following corollary for left coherent rings which are a generalization of left Noetherian rings.
\begin{Koro}\label{main1.2}
Let $R$ be a left coherent ring. Suppose that $M$ is a finitely presented left $R$-module with finite quasi-projective dimension. If ${\rm Ext}_{R}^n(M,M\oplus R)=0$ for all $n\geqslant 1$, then $M$ is projective.
\end{Koro}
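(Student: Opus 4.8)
The plan is to deduce the corollary from Theorem \ref{main1.1}(2) together with the elementary observation that the Auslander--Reiten conjecture is automatic for modules of finite projective dimension. Write $\mathcal{A}$ for the category of finitely presented left $R$-modules. Since $R$ is left coherent, $\mathcal{A}$ is an abelian category which is closed under kernels of morphisms and has enough projective objects, namely the finitely generated projective left $R$-modules; moreover every $R$-syzygy of an object of $\mathcal{A}$ again lies in $\mathcal{A}$, and on $\mathcal{A}$ the functors $\Ext^n_{\mathcal{A}}(-,-)$ coincide with $\Ext^n_R(-,-)$. In particular $\pd_{\mathcal{A}}\,M<\infty$ forces $\pd_R\,M<\infty$.

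The first step is to apply Theorem \ref{main1.1}(2) to the abelian category $\mathcal{A}$ and the object $M$. The hypothesis $\Ext^n_R(M,M\oplus R)=0$ for all $n\geqslant 1$ gives, in particular, $\Ext^n_{\mathcal{A}}(M,M)=0$ for all $n\geqslant 2$, and by assumption $\qpd_{\mathcal{A}}\,M<\infty$. Theorem \ref{main1.1}(2) then yields $\pd_{\mathcal{A}}\,M<\infty$, hence $d:=\pd_R\,M<\infty$.

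The second step is to show $d=0$, which I would do by contradiction. Assume $d\geqslant 1$. As $M$ is finitely presented and $R$ is left coherent, there is a projective resolution $0\to P_d\xrightarrow{f}P_{d-1}\to\cdots\to P_0\to M\to 0$ with each $P_i$ finitely generated projective. Set $N:=\Omega^{d-1}(M)\in\mathcal{A}$; then $\Ext^d_R(M,R)\cong\Ext^1_R(N,R)$ by dimension shifting (using $\Ext^{\geqslant 1}_R(P_i,R)=0$), and $N$ is not projective, since otherwise $\pd_R\,M\leqslant d-1$. Consequently the short exact sequence $0\to P_d\xrightarrow{f}P_{d-1}\to N\to 0$ is non-split (a split sequence would exhibit $N$ as a summand of the projective $P_{d-1}$), so its class is a non-zero element of $\Ext^1_R(N,P_d)$. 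Since $P_d$ is a direct summand of some free module $R^m$, the non-zero group $\Ext^1_R(N,P_d)$ is a direct summand of $\Ext^1_R(N,R)^m$, whence $\Ext^1_R(N,R)\neq 0$ and thus $\Ext^d_R(M,R)\neq 0$. This contradicts $\Ext^n_R(M,R)=0$ for $n=d\geqslant 1$; therefore $d=0$ and $M$ is projective.

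The only substantial input is Theorem \ref{main1.1}(2); the remaining arguments are standard homological algebra. The one point requiring care is essentially bookkeeping: one must ensure that a projective resolution of $M$ and all of its syzygies can be chosen inside $\mathcal{A}$, so that Theorem \ref{main1.1}(2) applies in $\mathcal{A}$ and so that $\pd_{\mathcal{A}}\,M$ coincides with $\pd_R\,M$. This is exactly what left coherence of $R$ provides (and is automatic when $R$ is left Noetherian), which is why the statement is naturally phrased for left coherent rings while specialising to the Noetherian case announced before the corollary.
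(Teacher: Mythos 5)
Your proposal is correct and follows essentially the same route as the paper's own proof (Corollary \ref{AR}): first apply Theorem \ref{main1.1}(2) in $R\modcat$ to conclude $\pd_R\,M<\infty$, then use the vanishing of $\Ext_R^{d}(M,R)$ (hence of $\Ext_R^1(\Omega^{d-1}(M),P_d)$) to split the top of a finite projective resolution and force $d=0$. Your phrasing of the second step as a contrapositive via the non-split extension class is just a cosmetic variant of the paper's splitting argument.
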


Analogous to global dimension, we introduce the notion of quasi-global dimension for left Noetherian rings and establish some basic properties of this new dimension in Proposition \ref{basic}. In particular, we show that \emph{Tachikawa's second conjecture holds for any self-injective Artin algebra with finite quasi-global dimension (or equivalently, with quasi-global dimension $0$)}; see Corollary \ref{QDZO} and Remark \ref{TSC}. It is worth mentioning that if two finite-dimensional self-injective algebras over a field are stably equivalent of Morita type or derived equivalent, then they have the same quasi-global dimension (see Corollary \ref{moritatype}). For other new advances on Tachikawa's second conjecture, we also refer the reader to \cite{xcf,xc}.

Further, using Theorem \ref{main1.1}, we can also determine the quasi-global dimensions of a class of finite-dimensional Nakayama algebras (see Theorem \ref{main} for more details). Consequently, those algebras always have finite quasi-global dimension, although they may have infinite global dimension in some cases.

\medskip
The paper is organized as follows. In Section \ref{Preli}, we recall some definitions and basic facts on quasi-projective dimensions of objects in abelian categories. In Section \ref{3}, we show Theorem \ref{main1.1}. In Section \ref{4}, we discuss quasi-global dimension for left Noetherian rings and calculate this dimension for a class of Nakayama algebras.

\section{Preliminaries}\label{Preli}
In this section, we introduce some standard notation and recall the definitions of quasi-projective resolution and quasi-projective dimensions of objects in abelian categories from \cite{QPD}.

All rings considered in this paper are assumed to be associative and with identity and all modules are unitary left modules.

Let $R$ be a ring. We denote by $R\Modcat$ the category of all $R$-modules and by $R\modcat$ the category of all finitely presented $R$-modules. For an $R$-module $M$, let {\rm add}$(M)$ (respectively, {\rm Add}$(M)$) be the full subcategory of $R$-{\rm Mod} consisting of all direct summands of ﬁnite (respectively, arbitrary) direct sums of copies of $M$. In many circumstances, we write $R$-{\rm proj} and $R$-{\rm Proj} for {\rm add}$({}_RR)$ and {\rm Add}$({}_RR)$, respectively. When $R$ is \emph{left coherent}, that is, every finitely generated left ideal of $R$ is finitely presented, the category $R\modcat$ is an abelian category and has $\prj{R}$ as its full subcategory consisting of all projective objects.

Let $\mathcal{A}$ be an additive category. By a \emph{complex} $X_{\bullet}\coloneqq(X_i,d_i^X)_{i\in\mathbb{Z}}$ over $\mathcal{A}$, we mean a sequence of morphisms $d^X_i$ between objects $X_i$ in $\mathcal{A}$: $$\begin{tikzcd}
\cdots \arrow[r, "d_{i+2}^X"] & X_{i+1} \arrow[r, "d_{i+1}^X"] & X_i \arrow[r, "d_{i}^X"] & X_{i-1} \arrow[r, "d_{i-1}^X"] & \cdots,
\end{tikzcd}$$such that $d_i^Xd_{i+1}^X=0$ for $i\in\mathbb{Z}$. For simplicity, we sometimes write $(X_i)_{i\in\mathbb{Z}}$ for $X_{\bullet}$ without mentioning $d_i^X$. For a fixed integer $n$, we denote by $X_{\bullet}[n]$ the complex obtained from $X_{\bullet}$ by shifting $n$ degrees, that is, $(X_{\bullet}[n])_i=X_{i-n}$. Let $\mathscr{C}(\mathcal{A})$ be the category of all complexes over $\mathcal{A}$ with chain maps, and let $\mathscr{K}(\mathcal{A})$ be the homotopy category of $\mathscr{C}(\mathcal{A})$. For a chain map $f_{\bullet}:X_{\bullet}\to Y_{\bullet}$ in $\mathscr{C}(\mathcal{A})$, we denoted by $ \Cone(f_{\bullet})$ the mapping cone of $f_{\bullet}$.
When $\mathcal{A}$ is an abelian category, we also define the $i$-th cycle, boundary and homology of $X_{\bullet}$ for each $i$ as follows:
$$
Z_i(X_{\bullet})\coloneqq\Ker(d_i^X),\quad  B_i(X_{\bullet})\coloneqq\Img(d_{i+1}^X)\quad\mbox{and}\quad  H_{i}(X_{\bullet})\coloneqq Z_i(X_{\bullet})/B_i(X_{\bullet}).
$$
Moreover, the \emph{supremum}, \emph{infimum}, \emph{homological supremum} and \emph{homological infimum} of $X_{\bullet}$ are defined by
$$\sup(X_{\bullet})\coloneqq\sup\{i\in\mathbb{Z}\,|\,X_i\not=0\},\quad \inf(X_{\bullet})\coloneqq\inf\{i\in\mathbb{Z}\,|\,X_i\not=0\},$$
$$\text{hsup}(X_{\bullet})\coloneqq\sup\{i\in\mathbb{Z}\,|\,H_i(X_{\bullet})\not=0\},
\quad\text{hinf}(X_{\bullet})\coloneqq\inf\{i\in\mathbb{Z}\,|\,H_i(X_{\bullet})\not=0\}.$$
Clearly, $\inf(X_{\bullet})\leqslant\text{hinf}(X_{\bullet})\leqslant\text{hsup}(X_{\bullet})\leqslant\sup(X_{\bullet}) $. We say that $X_{\bullet}$ is \emph{bounded} (respectively, \emph{bounded below}) if $\sup(X_{\bullet})<\infty$ and $\inf(X_{\bullet})>-\infty$ (respectively, ${\rm inf}(X_{\bullet})>-\infty$).
Let $\mathscr{C}^{-}(\mathcal{A})$ and $\mathscr{C}^{{\rm b}}(\mathcal{A})$ be the categories of all bounded below and bounded complexes over $\mathcal{A}$, respectively. Their homotopy categories are denoted by $\mathscr{K}^{-}(\mathcal{A})$ and $\mathscr{K}^{\mathrm{b}}(\mathcal{A})$, respectively.

When $\mathcal{A}$ is an abelian category, the bounded below derived category of $\mathcal{A}$ is denoted by $\mathscr{D}^{-}\mathcal{(A)}$, which is the localization of $\mathscr{K}^{-}(\mathcal{A})$ at all quasi-isomorphisms. Moreover, the \emph{extension} of two full subcategories $\mathcal{X}$ and $\mathcal{Y}$ in $\mathscr{D}^{-}\mathcal{(A)}$, denoted by $\mathcal{X} \ast \mathcal{Y}$, is defined to be the full subcategory of $\mathscr{D}^{-}\mathcal{(A)}$ consisting of all objects $Z$ such that there exists a distinguished triangle $X \to Z \to Y \to X[1]$ in $\mathscr{D}^{-}\mathcal{(A)}$ with $X\in \mathcal{X}$ and $Y\in \mathcal{Y}$. Normally, we identify $\mathcal{A}$ with the full subcategory of $\mathscr{D}^{-}(\mathcal{A})$ consisting of all stalk complexes concentrated in degree $0$.

From now on, \emph{let $\mathcal{A}$ be an abelian category with enough projective objects}.

Let $M$ be an object of $\mathcal{A}$. We denote by ${\rm pd}_{\mathcal{A}}\,M$ the projective dimension of $M$ in $\mathcal{A}$. Let
$$P_{\bullet}:\;\;\cdots\lra P_{i+1}\lra P_i\lra P_{i-1}\lra\cdots\lra P_1\lra P_0\lra 0$$ be a deleted projective resolution of $M$ in $\mathcal{A}$. For each integer $i\geqslant 1$, the $i$-th {\rm syzygy} of $M$ (with respect to $P_{\bullet}$) is defined to be $\Omega^i(M):=B_{i-1}(P_{\bullet})$. As usual, we write $\Omega(M)$ for $\Omega^1(M)$, and understand that $\Omega^0(M)=M$. The object $M$ is said to be \emph{periodic} if there exists an integer $r\geqslant 1$ such that $\Omega^r(M)\simeq M$. When $\mathcal{A}=R\Modcat$, we simply write $\pd_R\,M$ for $\pd_{\mathcal{A}}\,M$.

The following definition is taken from \cite[Definition 3.1]{QPD}.

\begin{Def}\label{def3.1(1)}
$(1)$ A complex $P_{\bullet}\in\mathscr{C}^{-}(\mathcal{A})$ is called a \emph{quasi-projective resolution} of $M$ if the following two conditions hold:

$(a)$ all $P_i$ are projective for $i\in\mathbb{Z}$;

$(b)$ for all integers $j\geqslant{\rm inf}(P_{\bullet})$, there are integers $n_j\geqslant 0$, not all zero, such that $H_j(P_{\bullet})\simeq M^{n_j}$. If, in addition, ${\rm sup}(P_{\bullet})<\infty$ (that is, $P_{\bullet}\in\mathscr{C}^{{\rm b}}(\mathcal{A}))$, then $P_{\bullet}$ is called a \emph{finite quasi-projective resolution} of $M$.

$(2)$ The \emph{quasi-projective dimension} of a nonzero object $M$ in $\mathcal{A}$ is defined to be
$$
\qpd_{\mathcal{A}}\,M\coloneqq\inf\{{\rm sup}(P_{\bullet})-{\rm hsup}(P_{\bullet})|\,P_{\bullet}\,\text{\,is a finite quasi-projective resolution of}\,\,M\}.
$$
We understand that the quasi-projective dimension of the zero object is $0$ (compared with  {\rm \cite[Definition 3.1(2)]{QPD}}).
\end{Def}

By Definition \ref{def3.1(1)}, $\qpd_{\mathcal{A}}\,M=\infty$ if and only if $M$ does not have a finite quasi-projective resolution. Since each deleted projective resolution of $M$ is automatically a quasi-projective resolution, it is clear that $\qpd_{\mathcal{A}}\,M\leqslant{\rm pd}_{\mathcal{A}}M$. The inequality can be strict, for example, if $M$ is periodic but non-projective (see Theorem \ref{QPP}(3)). Moreover, by shifting quasi-projective resolutions, we see that $\qpd_{\mathcal{A}}(M)$ is the infimum
of the projective dimensions ${\pd}_{\mathcal{A}}(\Coker(d_1^P)$, where  $P_{\bullet}\coloneqq(P_i,d_i^P)_{i\in\mathbb{Z}}$ runs through all finite quasi-projective resolutions of $M$ with $\mathrm{hsup}\,P_{\bullet}=0$. In this paper, for a ring $R$ and any $R$-module $M$, we set $\qpd_R\,M\coloneqq\qpd_{R\text{-Mod}}\,M$.

The following result reveals a close relationship between objects with finite quasi-projective dimensions and objects with finite projective dimensions.

\begin{Prop}\label{general 3}
Let $\mathcal{P}$ be the full subcategory of $\mathcal{A}$ consisting of (not necessarily all) projective objects and being closed under direct summands and finite direct sums. Let $M$ be an object of
$\mathcal{A}$ such that $\qpd_{\mathcal{A}}(M)<\infty$. Suppose that $M$ admits a projective resolution of the form
\[
\begin{tikzcd}
Q_{\bullet}:\quad\cdots\arrow[r]& Q_2 \arrow[r]  & Q_1 \arrow[r] & Q_0 \arrow[r] & M \arrow[r] & 0
\end{tikzcd}
\]
with all $Q_i\in\mathcal{P}$ for $i\geqslant 0$. Then there exists a bounded complex
\[
\begin{tikzcd}
P_{\bullet}:\quad0 \arrow[r] & P_r \arrow[r, "d_r"] & \cdots \arrow[r, "d_2"] & P_1 \arrow[r, "d_1"] & P_0 \arrow[r, "d_0"] & P_{-1} \arrow[r] &\cdots \arrow[r, "d_{-s+1}"] & P_{-s} \arrow[r] & 0
\end{tikzcd}
\]
in $\mathscr{C}^{\rm b}(\mathcal{P})$ with $s\geqslant 0$ and $\mathrm{hsup}\,P_{\bullet}=0$ satisfying the following:

$(a)$ The complex $P_{\bullet}$ is a quasi-projective resolution of $M$ and $\qpd_{\mathcal{A}}\,M=\pd_{\mathcal{A}}\,N$, where $N\coloneqq\Coker(d_1)$.

$(b)$ For a right exact, covariant functor $F:\mathcal{A}\to\mathcal{B}$ between abelian categories, let $L^iF$ denote the $i$-th left-derived functor of $F$ and let $d\coloneqq{\rm sup}\{i\geqslant0\,|\,L^iF(M)\not=0\}$.

If $0\leqslant d<\infty$, then $d=\sup\{i\geqslant 0\,|\,L^iF(N)\not=0\}$. Furthermore, if $0<d<\infty$, then $L^dF\big(H_0(P_{\bullet})\big)\simeq L^dF(N)$.
\end{Prop}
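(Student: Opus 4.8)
The plan is to build the bounded complex $P_\bullet$ by ``folding'' a finite quasi-projective resolution of $M$ back onto the projective resolution $Q_\bullet$ living in $\mathcal{P}$. Since $\qpd_\mathcal{A}(M)<\infty$, fix a finite quasi-projective resolution $R_\bullet$ of $M$ realizing the infimum, and (after shifting, as noted in the remark before Proposition \ref{general 3}) arrange $\mathrm{hsup}\,R_\bullet=0$, so that $\qpd_\mathcal{A}M=\mathrm{pd}_\mathcal{A}\bigl(\Coker(d_1^R)\bigr)$. The top part of $R_\bullet$ in degrees $\geqslant 0$ is then a deleted projective resolution of $M^{n_0}$ for some $n_0\geqslant 1$ (using condition (b) of Definition \ref{def3.1(1)} at $j=0$ together with $\mathrm{hsup}\,R_\bullet=0$). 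The point is that I want these top-degree projectives to lie in $\mathcal{P}$. Here I would invoke the hypothesis that $M$ admits a projective resolution $Q_\bullet$ with all terms in $\mathcal{P}$: taking the direct sum of $n_0$ copies of $Q_\bullet$ gives a projective resolution of $M^{n_0}$ entirely in $\mathcal{P}$ (recall $\mathcal{P}$ is closed under finite direct sums), and since any two projective resolutions of the same object agree up to projective summands and homotopy, I can use the standard comparison/Schanuel argument to replace the degrees $\geqslant 1$ portion of $R_\bullet$ by one in $\mathcal{P}$ without changing $\mathrm{sup}$, $\mathrm{hsup}$, or the relevant cokernel's projective dimension. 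This produces the desired $P_\bullet\in\mathscr{C}^{\rm b}(\mathcal{P})$ with $\mathrm{hsup}\,P_\bullet=0$ satisfying (a), after absorbing the finitely many negative-degree terms of $R_\bullet$ (which are already projective; to force them into $\mathcal{P}$ one replaces each by $0$ only if they vanish, otherwise one must be slightly more careful—see the obstacle below).

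For part (b), fix the right exact covariant functor $F\colon\mathcal{A}\to\mathcal{B}$ and set $d=\mathrm{sup}\{i\geqslant 0\mid L^iF(M)\neq 0\}$ with $0\leqslant d<\infty$. The key computational input is that $N=\Coker(d_1)$ fits, via $P_\bullet$, into a chain of syzygy-type relations connecting it to the homologies $H_j(P_\bullet)\simeq M^{n_j}$. Concretely, the brutal truncations of $P_\bullet$ give short exact sequences whose outer terms are built from projectives and copies of $M$, so the long exact sequences for $L^\bullet F$ let me propagate the vanishing/nonvanishing of $L^iF$ between $M$ and $N$ in a controlled range. Since all $P_i\in\mathcal{P}$ are projective, $L^iF(P_j)=0$ for $i\geqslant 1$, so in degrees $i>d$ the long exact sequences collapse and force $L^iF(N)=0$ for $i>d$; conversely, a dimension-shifting argument using the bottom of the complex (where $H_0(P_\bullet)\simeq M^{n_0}$, hence $L^dF(H_0(P_\bullet))\simeq L^dF(M)^{n_0}\neq 0$ when $d>0$) shows $L^dF(N)\neq 0$, giving $d=\mathrm{sup}\{i\geqslant 0\mid L^iF(N)\neq 0\}$. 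For the last assertion, when $0<d<\infty$ the top-degree derived functor is insensitive to adding projective summands and to the homotopy used in the comparison map, so chasing the same long exact sequences through the finitely many spots between $N$ and $H_0(P_\bullet)$ yields the isomorphism $L^dF\bigl(H_0(P_\bullet)\bigr)\simeq L^dF(N)$; I would phrase this as iterating the isomorphism $L^dF(\Omega^1 X)\simeq L^{d+1}F(X)$-type relations, which hold once $d$ exceeds the relevant projective-dimension contributions.

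The main obstacle I anticipate is handling the negative-degree terms $P_{-1},\dots,P_{-s}$ of the quasi-projective resolution cleanly: condition (b) of Definition \ref{def3.1(1)} requires $H_j(P_\bullet)\simeq M^{n_j}$ for \emph{all} $j\geqslant\mathrm{inf}(P_\bullet)$, including negative $j$, so these terms encode genuine information about $M$ (the periodic-object phenomenon from Theorem \ref{QPP}(3)) and cannot simply be discarded. The delicate point is ensuring they can be taken in $\mathcal{P}$ rather than merely among all projectives; this should follow because in each negative degree the local picture is again a short exact sequence relating a copy of $M$, a copy of $\Omega(M)$-type object, and a projective, and one can resolve the projective using $Q_\bullet$-summands (which live in $\mathcal{P}$) via the same Schanuel-lemma manoeuvre, but keeping track of all the multiplicities $n_j$ simultaneously and verifying that the resulting complex is still a quasi-projective resolution with unchanged $\mathrm{sup}-\mathrm{hsup}$ will require care. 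The derived-functor bookkeeping in (b) across both the positive and negative tails is then routine diagram-chasing, but the indexing must be set up so that the vanishing range $i>d$ genuinely sees only projective terms.
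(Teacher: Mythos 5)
Your overall strategy for part (b) matches the paper's: use the short exact sequences $0\to\Img(d_i)\to\Ker(d_{i-1})\to H_{i-1}(P_\bullet)\to 0$ and $0\to M^{n_0}\to N\to\Img(d_0)\to 0$ together with dimension shifting along the negative tail to kill $L^{d+j}F(\Img(d_0))$ for $j>0$ and then read off the statement. That part of your sketch would go through once $P_\bullet$ exists.

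The genuine gap is in the construction of $P_\bullet$ itself, i.e.\ part (a). First, your premise that ``the top part of $R_\bullet$ in degrees $\geqslant 0$ is a deleted projective resolution of $M^{n_0}$'' is false whenever $s>0$: with $\mathrm{hsup}\,R_\bullet=0$ the truncation $\cdots\to R_1\to R_0$ resolves $N=\Coker(d_1^R)$, which sits in an extension $0\to M^{n_0}\to N\to\Img(d_0^R)\to 0$ and is not $M^{n_0}$. Second, and more seriously, the Schanuel/comparison argument you invoke applies to two projective resolutions \emph{of the same object}, but a quasi-projective resolution is not a projective resolution of any single object (its negative-degree part carries the homologies $M^{n_{-1}},\dots,M^{n_{-s}}$), so there is no object to apply Schanuel to; you correctly flag this as the main obstacle but do not resolve it. The paper's missing idea is to work in $\mathscr{D}^{-}(\mathcal{A})$: decompose $S_\bullet$ by canonical truncations as an iterated extension of the shifted stalk complexes $H_{-j}(S_\bullet)[-j]\simeq M^{n_{-j}}[-j]$, replace each stalk by the shifted resolution $Q_\bullet^{n_{-j}}[-j]\in\mathscr{C}^{-}(\mathcal{P})$, and realize the extension data by genuine chain maps to obtain $T_\bullet\in\mathscr{C}^{-}(\mathcal{P})$ quasi-isomorphic to $S_\bullet$; then the triangle equivalence $\mathscr{K}^{-}(\mathcal{X})\simeq\mathscr{D}^{-}(\mathcal{A})$ forces $T_\bullet$ and $S_\bullet$ to agree up to contractible summands, so $\pd_{\mathcal{A}}\Coker(d_1^T)=\pd_{\mathcal{A}}\Coker(d_1^S)=\qpd_{\mathcal{A}}M$, and the brutal truncation of $T_\bullet$ at degree $r$ lands in $\mathscr{C}^{\rm b}(\mathcal{P})$ because $\Omega^r(\Coker(d_1^T))$ is a direct summand of $T_r\in\mathcal{P}$ and $\mathcal{P}$ is closed under direct summands. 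This last truncation step, which is what makes $P_\bullet$ bounded with all terms in $\mathcal{P}$, is also absent from your proposal.
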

\begin{proof}
Let $\mathcal{X}$ denote the full subcategory of $\mathcal{A}$ consisting of all projective objects. Then $\mathcal{P}\subseteq \mathcal{X}$. Let $r\coloneqq\qpd_{\mathcal{A}} M < \infty$. Then $M$ has a finite quasi-projective resolution $S_{\bullet}\coloneqq(S_i,d_i^S)_{i\in\mathbb{Z}}\in\mathscr{C}^{\rm b}(\mathcal{X})$ in $\mathcal{A}$ with $\mathrm{hsup}(S_{\bullet})=0$ and $r={\pd}_{\mathcal{A}}\big(\Coker(d_1^S)\big)$. Set $s\coloneqq-\mathrm{hinf}(S_{\bullet})$. Then $0\leqslant s<\infty$. By taking truncations of complexes over $\mathcal{A}$, we have
\[
S_{\bullet}\in H_0(S_{\bullet}) \ast H_{-1}(S_{\bullet})[-1] \ast \cdots \ast H_{-s}(S_{\bullet})[-s]\subseteq \mathscr{D}^{-}(\mathcal{A}).
\]
Clearly, for each $i$, $H_i(S_{\bullet})\simeq M^{n_i}$ for some integer $n_i \geqslant 0$.
Since $\mathcal{P}\subseteq \mathcal{A}$ is closed under finite direct sums, we can replace $M$ with its projective resolution $Q_{\bullet}\in\mathscr{C}^{-}(\mathcal{P})$ and construct a complex $T_{\bullet} \in \mathscr{C}^{-}(\mathcal{P})$ that is isomorphic in $\mathscr{D}^{-}(\mathcal{A})$ to $S_{\bullet}$. Consequently, $H^i(T_{\bullet})\simeq H^i(S_{\bullet})$ for each $i\in\mathbb{Z}$.
This implies that $T_{\bullet}$ is also a quasi-projective resolution of $M$ with $\mathrm{hsup}(T_{\bullet})=0$. Further, since $\mathscr{K}^{-}(\mathcal{X})$ is triangle equivalent to $\mathscr{D}^{-}(\mathcal{A})$ and $\mathscr{K}^{-}(\mathcal{P})\subseteq \mathscr{K}^{-}(\mathcal{X})$, we have $T_{\bullet} \simeq S_{\bullet}$ in $\mathscr{K}^{-}(\mathcal{X})$. It follows that there exist contractible complexes $X_{\bullet}$ and $Y_{\bullet}$ in $\mathscr{C}^{-}(\mathcal{X})$ such that $T_{\bullet} \oplus X_{\bullet} \simeq S_{\bullet} \oplus Y_{\bullet}$ in $\mathscr{C}^{-}(\mathcal{X})$.
In both sides of the isomorphism, we take the cokernels of complexes in degree $1$ and obtain
$$\Coker(d_1^T)\oplus\Coker(d_1^X)\simeq\Coker(d_1^S)\oplus\Coker(d_1^Y).$$
Since $X_{\bullet}$ and $Y_{\bullet}$ are contractible, both $\Coker(d_1^X)$ and $\Coker(d_1^Y)$ belong to $\mathcal{X}$.
Thus $\pd(\Coker(d_1^T))=\pd(\Coker(d_1^S))$.
By $\mathrm{hsup}(T_{\bullet})=0$, the sequence
$$\cdots \lra T_r\lra T_{r-1}\lra \cdots\lra T_1\lra  T_0\lra\Coker(d_1^T)\lra 0,$$ truncated from the complex $T_{\bullet}$, is a projective resolution of $\Coker(d_1^T)$. It follows that $\Omega^r(\Coker(d_1^T))$ is projective, and thus isomorphic to a direct summand of $T_r$. Since $\mathcal{P}$ is closed under direct summands in $\mathcal{A}$ and contains $T_r$, we have $\Omega^r(\Coker(d_1^T))\in\mathcal{P}$.
Now, let $P_{\bullet}$ be the truncation of $T_{\bullet}$ in degree $r$:
$$
P_{\bullet}: \;0\lra \Omega^r(\Coker(d_1^T))\lra T_{r-1}\to T_{r-2}\lra \cdots\lra T_1\lra T_0\lra\cdots\lra T_{-s}\lra 0.
$$
Then $P_{\bullet}\in \mathscr{C}^{\rm b}(\mathcal{P})$ is homotopy equivalent to $T_\bullet$ and satisfies $(a)$.

To show $(b)$, suppose $0\leqslant d<\infty$. If $s=0$, then $N\simeq H_0(P_\bullet)\simeq M^{n_0}$ and thus $(b)$ holds trivially. In the following, let $s>0$. Observe that $H_0(P_{\bullet})\simeq H_0(S_{\bullet})\simeq M^{n_0}\neq 0$. Applying the functor $L^{d+j}F$ for $j\geqslant 0$ to the exact sequence $ 0 \to M^{n_0} \to N \to \Img(d_0)\to 0$ in $\mathcal{A}$ yields an exact sequence in $\mathcal{B}$:
\[ L^{d+j+1}F\big(\Img(d_0)\big)\lra L^{d+j}F(M^{n_0}) \lra L^{d+j}F(N) \lra L^{d+j}F(\Img\,d_0). \]
Consequently,  if $L^{d+j}F\big(\Img(d_0)\big) = 0$ for all $j > 0$, then
$$0=\big(L^{d+j}F(M)\big)^{n_0}\simeq L^{d+j}F(M^{n_0})\simeq L^{d+j}F(N)\quad \mbox{and}\quad 0\neq L^{d}F(M^{n_0})\hookrightarrow L^{d}F(N),$$ and therefore $d=\sup\{i \geqslant0 \mid L^iF(N) \neq 0\}$.

It remains to show $L^{d+j}F\big(\Img(d_0)\big) = 0$ for all $j > 0$.

Now, we fix $j>0$. Since $L^{d+k}F(M)=0$ for all $k>0$ and $H_{i-1}(P_{\bullet})\simeq M^{n_{i-1}}$ for $i\in\mathbb{Z}$, we apply the functor $L^{d+j}F$ to the exact sequence $0 \to \Img(d_i) \to \Ker(d_{i-1}) \to H_{i-1}(P_{\bullet}) \to 0 $ and obtain  $$L^{d+j}F(\Img(d_i))\simeq L^{d+j}F(\Ker(d_{i-1}))\;\;\mbox{and}\;\; L^dF(\Img(d_i))\hookrightarrow L^dF(\Ker(d_{i-1})).$$
Since $\Ker(d_{i-1})=\Omega(\Img(d_{i-1}))$ by $P_{i-1}\in\mathcal{P}$, it follows that
$L^{m}F(\Ker(d_{i-1}))\simeq L^{m+1}F(\Img(d_{i-1}))$ for all $m>0$. Thus
$L^{d+j}F(\Img(d_i))\simeq L^{d+j+1}F(\Img(d_{i-1}))$, and $L^dF(\Img(d_i))\hookrightarrow L^dF(\Ker(d_{i-1}))\simeq L^{d+1}F(\Img(d_{i-1}))$ for $d>0$. This implies a series of isomorphisms:
$$
L^{d+j}F(\Img(d_0))\simeq L^{d+j+1}F(\Img(d_{-1}))\simeq L^{d+j+2}F(\Img(d_{-2}))\simeq\cdots\simeq L^{d+j+s}F(\Img(d_{-s}))=0,$$
where the last equality follows from the vanish of the map $d_{-s}:P_{-s}\to 0$.

Suppose $d > 0$. Then
$$
L^dF(\Img(d_0))\hookrightarrow L^{d+1}F(\Img(d_{-1}))\simeq L^{d+2}F(\Img(d_{-2}))\simeq \cdots\simeq L^{d+s}F(\Img(d_{-s}))=0,
$$
and therefore $L^dF(\Img(d_0))=0$. From the exact sequence
\[ 0=L^{d+1}F(\mathrm{Im}(d_0)) \lra  L^dF(M^{n_0}) \lra L^dF(N) \lra L^dF(\mathrm{Im}(d_0))= 0, \]
we see that $L^dF(H_{0}(P_{\bullet}))\simeq L^dF(M^{n_0})\simeq L^dF(N).$ This shows $(b)$.

Similarly, we can also show that if $d=-\infty$ (that is, $L^iF(M)=0$ for all $i\geqslant 0$), then
\[
{\rm sup}\{i\in\mathbb{N}\,|\,L^iF(N)\not=0\} =
\begin{cases}
0, & \text{if } F(N)\neq 0; \\
-\infty, & \text{if } F(N)=0,
\end{cases}
\]
where the supreme is $-\infty$ means that $L^iF(N)=0$ for all $i\geqslant 0$.
\end{proof}

A direct consequence of Proposition \ref{general 3} is the following result which generalizes  \cite[Proposition 3.4]{QPD}.

\begin{Koro} \label{general 3.4}
Let $R$ be a left coherent ring and $M$ a finitely presented $R$-module with finite quasi-projective dimension. Then:

$(1)$  There exists a bounded complex $P_{\bullet}\in \mathscr{K}^{{\rm b}}(R\text{-}{\rm proj})$ that is a quasi-projective resolution of $M$ such that $\qpd_R(M)={\rm sup}(P_{\bullet})-{\rm hsup}(P_{\bullet})$.

$(2)$ There exists a finitely presented $R$-module $N$ such that $\pd_R(N)=\qpd_R(M)$ and there is an injection $M \hookrightarrow N$ of $R$-modules.
\end{Koro}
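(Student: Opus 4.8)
The plan is to derive both assertions directly from Proposition \ref{general 3}, applied with $\mathcal{A}=R\Modcat$ and with $\mathcal{P}=\prj R$ the full subcategory of finitely generated projective $R$-modules. This $\mathcal{P}$ consists of projective objects and is evidently closed under finite direct sums and under direct summands, so the only hypothesis of Proposition \ref{general 3} still requiring an argument — beyond the standing assumption $\qpd_R M<\infty$ — is that $M$ admits a projective resolution whose terms all lie in $\mathcal{P}$. This is exactly where left coherence is used: from a finite presentation $P_1\to P_0\to M\to 0$ with $P_0,P_1\in\prj R$, the first syzygy $\Omega(M)=\Img(P_1\to P_0)$ is a finitely generated submodule of the finitely generated module $P_0$, hence finitely presented by coherence; running the same argument on each higher syzygy yields a resolution $Q_{\bullet}\in\mathscr{C}^{-}(\prj R)$ of $M$. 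Proposition \ref{general 3}(a) then supplies a bounded complex $P_{\bullet}\in\mathscr{C}^{\rm b}(\prj R)$ with $\mathrm{hsup}\,P_{\bullet}=0$ that is a quasi-projective resolution of $M$ and for which $\qpd_R M=\pd_R N$, where $N\coloneqq\Coker(d_1^P)$.

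For part $(1)$, what is left is to identify $\sup(P_{\bullet})$ with $\qpd_R M$. Put $r\coloneqq\qpd_R M=\pd_R N$. Since $\mathrm{hsup}\,P_{\bullet}=0$, the sequence $\cdots\to P_1\to P_0\to N\to 0$ is a projective resolution of $N$; as $P_i=0$ for $i>r$, it has length $\leqslant r$, so, using $\pd_R N=r$, the term $P_r$ must be nonzero (the degenerate case $r=0$, where $N$ is itself projective, and the trivial case $M=0$, are checked separately). Hence $\sup(P_{\bullet})=r$, giving $\sup(P_{\bullet})-\mathrm{hsup}\,P_{\bullet}=r=\qpd_R M$, and since $P_{\bullet}$ is a bounded complex of finitely generated projectives it is an object of $\mathscr{K}^{\rm b}(\prj R)$. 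This proves $(1)$.

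For part $(2)$ I keep this $N$. The exact sequence $P_1\xrightarrow{\,d_1^P\,}P_0\to N\to 0$ with $P_0,P_1\in\prj R$ shows that $N$ is finitely presented, and $\pd_R N=\qpd_R M$ by the above. To obtain the inclusion $M\hookrightarrow N$, observe that the canonical epimorphism $N=P_0/\Img(d_1^P)\twoheadrightarrow P_0/\Ker(d_0^P)\cong\Img(d_0^P)$ has kernel $\Ker(d_0^P)/\Img(d_1^P)=H_0(P_{\bullet})$, which is isomorphic to $M^{n_0}$ with $n_0\geqslant 1$ — the inequality because $\mathrm{hsup}\,P_{\bullet}=0$ forces $H_0(P_{\bullet})\neq 0$. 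Composing a coordinate embedding $M\hookrightarrow M^{n_0}\cong H_0(P_{\bullet})$ with the inclusion $H_0(P_{\bullet})\hookrightarrow N$ yields the required monomorphism.

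Altogether the corollary is essentially a specialization of Proposition \ref{general 3} to $R\Modcat$ with $\mathcal{P}=\prj R$, and I expect no real obstacle. The only points needing care are at the interface between $R\Modcat$ and the small category $\prj R$ — namely, that coherence genuinely furnishes a finitely generated projective resolution of $M$, and that the complex produced by Proposition \ref{general 3} stays inside $\mathscr{C}^{\rm b}(\prj R)$ — together with the non-vanishing of $P_r$ that pins $\sup(P_{\bullet})$ down to $\qpd_R M$.
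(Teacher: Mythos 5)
Your proof is correct and follows essentially the same route as the paper: both are direct specializations of Proposition \ref{general 3} with $\mathcal{P}=\prj{R}$, producing the bounded complex $P_\bullet$ with $\mathrm{hsup}(P_\bullet)=0$, setting $N=\Coker(d_1^P)$, and extracting the embedding from the exact sequence $0\to H_0(P_\bullet)\to N\to\Img(d_0^P)\to 0$ with $H_0(P_\bullet)\simeq M^{n_0}$, $n_0>0$. The only (harmless) differences are that the paper works in $\mathcal{A}=R\modcat$ whereas you work in $R\Modcat$ and justify via coherence that $M$ has a resolution by finitely generated projectives, and that you spell out the check that $P_r\neq 0$ (so $\sup(P_\bullet)=\qpd_R\,M$), a detail the paper leaves implicit.
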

\begin{proof}
Since $R$ is left coherent, $R\modcat$ is an abelian category and has $\prj{R}$ as its full subcategory consisting of all projective objects. In Proposition \ref{general 3}, we take $\mathcal{A}=R\modcat$ and $\mathcal{P}=\prj{R}$. By Proposition \ref{general 3}$(a)$, there exists a bounded complex $P_{\bullet}\coloneqq(P_i,d_i^P)_{i\in\mathbb{Z}}\in\mathscr{C}^{{\rm b}}(R\text{-}{\rm proj})$ satisfying $(1)$ and ${\rm hsup}(P_{\bullet})=0$. Let $N\coloneqq\Coker(d_1^P)$. Then $N\in R\modcat$ and $\pd_R(N)=\qpd_R(M)$. Since there is an exact sequence $0\to H_0(P_\bullet)\to N\to \Img(d_0^P)\to 0$ of $R$-modules and $H_0(P_\bullet)\simeq M^{n_0}$ for some integer $n_0>0$, we obtain an injection $M \hookrightarrow N$ in $R\modcat$.
\end{proof}

\begin{Koro}\label{FB}
Let $\mathcal{A}$ be a Frobenius abelian category. Then $\qpd_\mathcal{A}(M)=0$ or $\infty$ for any $M\in\mathcal{A}$.
\end{Koro}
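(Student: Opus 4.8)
The plan is to reduce the claim to the elementary observation that, in a Frobenius abelian category, every object of finite projective dimension is already projective. Recall that $\mathcal{A}$ being Frobenius means that it has enough projective and enough injective objects and that the class of projectives coincides with the class of injectives; in particular the standing hypothesis of Section \ref{Preli} is satisfied, so $\qpd_{\mathcal{A}}$ is defined. Assume $\qpd_{\mathcal{A}}(M)<\infty$ (the case $M=0$ being trivial). By the remark following Definition \ref{def3.1(1)} (or by Proposition \ref{general 3}$(a)$ applied with $\mathcal{P}$ the class of all projective objects of $\mathcal{A}$), $\qpd_{\mathcal{A}}(M)$ equals the infimum of $\pd_{\mathcal{A}}(\Coker(d_1^P))$ taken over all finite quasi-projective resolutions $P_{\bullet}=(P_i,d_i^P)$ of $M$ with $\mathrm{hsup}(P_{\bullet})=0$. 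This infimum ranges over a nonempty subset of $\mathbb{N}\cup\{\infty\}$ and is finite, hence attained: there is such a $P_{\bullet}$ for which $N\coloneqq\Coker(d_1^P)$ satisfies $\pd_{\mathcal{A}}(N)=\qpd_{\mathcal{A}}(M)<\infty$.

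Next I would prove the key lemma: if $X\in\mathcal{A}$ has $\pd_{\mathcal{A}}(X)=n<\infty$, then $n=0$. Induct on $n$. If $n\geqslant 1$, choose a short exact sequence $0\to\Omega(X)\to P\to X\to 0$ with $P$ projective. Then $\pd_{\mathcal{A}}(\Omega(X))=n-1<\infty$, so by the induction hypothesis $\Omega(X)$ is projective, and since $\mathcal{A}$ is Frobenius it is therefore injective. Injectivity of the left-hand term forces this sequence to split, so $X$ is a direct summand of $P$ and thus projective, i.e. $\pd_{\mathcal{A}}(X)=0$; this contradicts $n\geqslant 1$. Hence $n=0$.

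Applying the lemma to $N$ gives $\pd_{\mathcal{A}}(N)=0$, whence $\qpd_{\mathcal{A}}(M)=0$; together with the remaining possibility $\qpd_{\mathcal{A}}(M)=\infty$, this establishes the dichotomy. The argument is essentially formal, and the one place that genuinely uses the Frobenius hypothesis (rather than merely ``enough injectives'') is the splitting step in the lemma; a secondary, purely bookkeeping point is that the infimum defining $\qpd_{\mathcal{A}}(M)$ is attained once it is finite, because it ranges over a subset of $\mathbb{N}\cup\{\infty\}$. I do not anticipate a serious obstacle.
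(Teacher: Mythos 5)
Your proof is correct and follows essentially the same route as the paper: reduce via Proposition \ref{general 3}$(a)$ to an object $N$ with $\pd_{\mathcal{A}}(N)=\qpd_{\mathcal{A}}(M)<\infty$, then invoke the fact that in a Frobenius category finite projective dimension forces projectivity. The only difference is that you spell out the inductive splitting argument for that last fact, which the paper asserts without proof.
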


\begin{proof}
Since $\mathcal{A}$ is a Frobenius category, projective objects and injective objects of $\mathcal{A}$ coincide. This implies that objects of $\mathcal{A}$ with finite projective dimensions are exactly projective objects. By Proposition \ref{general 3}$(a)$, if $\qpd_\mathcal{A}(M)<\infty$, then $\qpd_\mathcal{A}(M)=0$.
\end{proof}

Finally, we give an application of Proposition \ref{general 3} to generalize the classical depth formula
(see \cite[Theorem 1.2]{1961auslander}). The following definition is standard in commutative algebra.

\begin{Def}
Let $(R, \mathfrak{m})$ be a commutative Noetherian local ring with the maximal ideal $\mathfrak{m}$. The \emph{depth} of a finitely generated $R$-module $M$ is defined as
\[\dep_{R}(M)\coloneqq\min\{i\geqslant 0\,\mid\,\Ext_{R}^{i}(R/\mathfrak{m},M)\not=0\}.\]
\end{Def}

\begin{Theo}{\rm \cite[Theorem 4.2]{2024}}\label{depth formula}
Let $R$ be a commutative Noetherian local ring, and let $M$ and $N$ be non-zero finitely generated $R$-modules. Define $q:=\sup\{i\geqslant 0\,\mid\,\Tor_{i}^{R}(M,N)\not=0\}$. Suppose that $q<\infty$ and $\qpd_{R}(M)<\infty$. If $q=0$ or $\dep(\Tor_{q}^{R}(M,N))\leqslant1$, then $$\dep(N)=\dep(\Tor_{q}^{R}(M,N))+\qpd_{R}(M)-q.$$
\end{Theo}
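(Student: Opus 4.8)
The plan is to deduce the formula from the classical depth formula for modules of finite projective dimension (\cite[Theorem 1.2]{1961auslander}) by replacing $M$ with the finite-projective-dimension module furnished by Proposition \ref{general 3}. Throughout, $M'$ denotes the module produced there (to avoid clashing with the module $N$ of the statement), and I set $p\coloneqq\qpd_R M$.

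First I would apply Proposition \ref{general 3} (as in the proof of Corollary \ref{general 3.4}) with $\mathcal{A}=R\modcat$, $\mathcal{P}=\prj R$, and the right exact covariant functor $F=-\otimes_R N\colon R\modcat\to R\modcat$, whose $i$-th left derived functor is $\Tor_i^R(-,N)$; thus the integer $d$ of Proposition \ref{general 3}(b) is exactly $q$ (note $q\geqslant 0$, since $M\otimes_R N\neq 0$ by Nakayama's lemma). This produces a finitely generated module $M'$ together with: a short exact sequence $0\to M^{n_0}\to M'\to W\to 0$ for some integer $n_0>0$, where $M^{n_0}\simeq H_0(P_\bullet)$ and $W=\Img(d_0^P)$ is the image of the first map of the negative tail $P_0\to P_{-1}\to\cdots\to P_{-s}\to 0$ of the underlying complex $P_\bullet$; the equalities $\pd_R M'=p<\infty$ and $\sup\{i\geqslant 0\mid\Tor_i^R(M',N)\neq 0\}=q$; and, when $q\geqslant 1$, an isomorphism $\Tor_q^R(M',N)\simeq\Tor_q^R(M,N)^{n_0}$.

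Next I would feed the pair $(M',N)$ into the classical depth formula. Its hypotheses hold: $M'$ is a nonzero finitely generated module with $\pd_R M'=p<\infty$; one has $\sup\{i\geqslant 0\mid\Tor_i^R(M',N)\neq 0\}=q<\infty$; and the remaining condition transfers, since for $q\geqslant 1$ we get $\dep\bigl(\Tor_q^R(M',N)\bigr)=\dep\bigl(\Tor_q^R(M,N)^{n_0}\bigr)=\dep\bigl(\Tor_q^R(M,N)\bigr)\leqslant 1$ (the depth of a nonzero finitely generated module is unchanged under taking a finite direct sum of copies), while for $q=0$ the condition ``$q=0$'' holds by hypothesis. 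The classical formula then gives $\dep N=\dep\bigl(\Tor_q^R(M',N)\bigr)+p-q$. For $q\geqslant 1$ this reads $\dep N=\dep\bigl(\Tor_q^R(M,N)\bigr)+\qpd_R M-q$, which is precisely the assertion, so this case is done.

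It remains to settle $q=0$, where the previous line becomes $\dep N=\dep(M'\otimes_R N)+\qpd_R M$ and the task reduces to proving $\dep(M'\otimes_R N)=\dep(M\otimes_R N)$. Here I would use that, because $q=0$ means $\Tor_i^R(M,N)=0$ for all $i>0$, a descending induction along the short exact sequences $0\to\Ker(d_{-j}^P)\to P_{-j}\to\Img(d_{-j}^P)\to 0$ and $0\to\Img(d_{-j+1}^P)\to\Ker(d_{-j}^P)\to H_{-j}(P_\bullet)\to 0$ making up the negative tail of $P_\bullet$ shows $\Tor_i^R(W,N)=0$ for all $i>0$; consequently $0\to(M\otimes_R N)^{n_0}\to M'\otimes_R N\to W\otimes_R N\to 0$ is exact, and one also wants $\dep(W\otimes_R N)\geqslant\dep(M\otimes_R N)$. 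Inserting this inequality into the depth lemma applied to that short exact sequence then forces $\dep(M'\otimes_R N)=\dep(M\otimes_R N)$, completing the proof. I expect the inequality $\dep(W\otimes_R N)\geqslant\dep(M\otimes_R N)$ to be the main obstacle: iterating the depth lemma along the tail naively reintroduces the quantity $\dep N$, whereas $\dep N\geqslant\dep(M\otimes_R N)$ simply fails without the finiteness of $\qpd_R M$ (already for $N={}_RR$ over a suitable non-Cohen-Macaulay ring). So this step must use the finite quasi-projective resolution essentially — concretely, one should rather exploit the bounded complex $P_\bullet\otimes_R N$, whose homology is $(M\otimes_R N)^{n_j}$ in degree $j$ for $j\leqslant 0$ and vanishes in positive degrees, to pin down $\dep(W\otimes_R N)$.
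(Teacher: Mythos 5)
Your reduction for the case $q\geqslant 1$ is exactly the paper's argument: apply Proposition \ref{general 3}(b) to $F=-\otimes_R N$, obtain $M'$ (the paper's $C=\Coker(d_1)$) with $\pd_R M'=\qpd_R M$, the same Tor-supremum $q$, and $\Tor_q^R(M',N)\simeq\Tor_q^R(M,N)^{n_0}$, then invoke Auslander's depth formula for the pair $(M',N)$. That part is correct and complete.

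The case $q=0$ is where your proposal has a genuine gap, and it is the one you flag yourself. Proposition \ref{general 3}(b) only supplies the isomorphism $L^dF\bigl(H_0(P_\bullet)\bigr)\simeq L^dF(N)$ when $d>0$; for $d=0$ you get no comparison between $M\otimes_R N$ and $M'\otimes_R N$, so after applying Auslander's formula to $(M',N)$ you still owe the equality $\dep(M'\otimes_R N)=\dep(M\otimes_R N)$. Your proposed route via the exact sequence $0\to(M\otimes_R N)^{n_0}\to M'\otimes_R N\to W\otimes_R N\to 0$ and the depth lemma cannot close this: the depth lemma only yields $\dep(M'\otimes_R N)\geqslant\min\{\dep(M\otimes_R N),\dep(W\otimes_R N)\}$ together with inequalities in the other direction that are off by one, and the needed bound $\dep(W\otimes_R N)\geqslant\dep(M\otimes_R N)$ is not established (as you note, iterating along the tail reintroduces $\dep N$). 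In fact the equality $\dep(M'\otimes_R N)=\dep(M\otimes_R N)$ is essentially equivalent to the statement being proved: granting Auslander's formula for $(M',N)$, it amounts to $\dep N=\dep(M\otimes_R N)+\qpd_R M$, i.e.\ the combination of the quasi-Auslander--Buchsbaum formula with the depth formula for Tor-independent modules of finite quasi-projective dimension. This is precisely how the paper disposes of the case $q=0$: it does not run the reduction at all there, but quotes \cite[Theorems 4.4 and 4.11]{QPD} directly. Your suggested repair --- analyzing the bounded complex $P_\bullet\otimes_R N$, whose homology is $(M\otimes_R N)^{n_j}$ in degrees $j\leqslant 0$ --- is viable but would amount to reproving \cite[Theorem 4.11]{QPD}; citing it (together with \cite[Theorem 4.4]{QPD}) is the intended shortcut.
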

\begin{proof}
The case $q=0$ in Theorem \ref{depth formula} (that is, $\Tor_{i}^{R}(M,N)=0$ for all $i>0$) is a direct consequence of \cite[Theorems 4.4 and 4.11]{QPD}.

For the case $q>0$, we provide a new proof that is different from the one given in \cite[Theorem 4.2]{2024}.

Let $F:=-\otimes_{R}N: R\Modcat\to R\Modcat$, a covariant and right exact functor.
It is known that, for each $i\in\mathbb{N}$, the $i$-th left-derived functor $L^iF$ of $F$
is exactly the functor $\Tor^R_i(-, N)$. This implies
$$q=\sup\{i\geqslant 0\,\mid\,L^i(F)(M)\not=0\}<\infty.$$
Since $\qpd_{R}(M)<\infty$, it follows from Proposition \ref{general 3}$(b)$ that there exists a finite quasi-projective resolution $(P_{\bullet},d_{i})_{i\in\mathbb{Z}}$ of ${_R}M$ with $\mathrm{hsup}\,P_{\bullet}=0$ and $C:=\Coker(d_1)$ satisfying
$$q=\sup\{i\geqslant 0\,\mid\,\Tor_{i}^{R}(C,N)\not=0\},\;\; \pd_{R}(C)=\qpd_{R}(M)<\infty\;\;\mbox{and}\;\;
\Tor_{q}^{R}(C,N)\simeq\Tor_{q}^{R}(H_{0}(P_{\bullet}),N).$$
Note that $H_{0}(P_{\bullet})\simeq M^{n_0}$ for some $n_{0}>0$. This implies $\dep(\Tor_{q}^{R}(C,N))=\dep(\Tor_{q}^{R}(M,N))\leqslant 1$. Now, we apply \emph{Auslander's depth formula} (see \cite[Theorem 1.2]{1961auslander}) directly to the pair $(C,N)$ of $R$-modules, and obtain the formula $$\dep(N)=\dep(\Tor_{q}^{R}(C,N))+\pd_{R}(C)-q.$$ Thus $\dep(N)=\dep(\Tor_{q}^{R}(M,N))+\qpd_{R}(M)-q.$
\end{proof}

\section{New properties of quasi-projective dimensions}\label{3}

In this section, we develop some general methods to determine the quasi-projective dimensions of objects in abelian categories. In particular, we give a proof of Theorem \ref{main1.1}.

Throughout this section, $\mathcal{A}$ stands for an abelian category with enough projective objects. Denote by $\mathcal{P}$ the full subcategory of $\mathcal{A}$ consisting of \emph{all} projective objects.

Our first result generalizes {\rm \cite[Corollary 4.10]{QPD}} that is focused on finitely generated modules over commutative Noetherian rings and is a corollary of the \emph{Auslander–Buchsbaum formula} for modules of finite quasi-projective dimension established in \cite[Theorem 4.4]{QPD}.

\begin{Prop}\label{qp=p1}
Let $M$ be an object of $\mathcal{A}$ with ${\rm pd}_{\mathcal{A}}\,M<\infty$. Then $\qpd_{\mathcal{A}}\,M={\rm pd}_{\mathcal{A}}\,M$.
\end{Prop}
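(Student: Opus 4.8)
The plan is to prove the nontrivial inequality $\pd_{\mathcal{A}}\,M\leqslant\qpd_{\mathcal{A}}\,M$ under the hypothesis $p:=\pd_{\mathcal{A}}\,M<\infty$; since $\qpd_{\mathcal{A}}\,M\leqslant\pd_{\mathcal{A}}\,M$ always holds, this suffices. The case $p=0$ (that is, $M$ projective) is immediate, so I would assume $p\geqslant 1$. As $\pd_{\mathcal{A}}\,M<\infty$ forces $\qpd_{\mathcal{A}}\,M<\infty$, Proposition \ref{general 3}—applied with $\mathcal{P}$ the category of all projective objects of $\mathcal{A}$ and with any projective resolution of $M$—produces a bounded complex of projectives
$$P_{\bullet}:\quad 0 \to P_r \xrightarrow{d_r} \cdots \to P_1 \xrightarrow{d_1} P_0 \xrightarrow{d_0} P_{-1} \to \cdots \to P_{-s} \to 0$$
with $\mathrm{hsup}\,P_{\bullet}=0$ that is a quasi-projective resolution of $M$ and satisfies $\qpd_{\mathcal{A}}\,M=\pd_{\mathcal{A}}\,N$ for $N:=\Coker(d_1)$. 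So the whole problem reduces to showing $\pd_{\mathcal{A}}\,N\geqslant p$.

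The heart of the argument is a bound on the projective dimension of $I_0:=\Img(d_0)$ coming from the negative part $P_{-1}\to\cdots\to P_{-s}$ of $P_{\bullet}$. For $0\leqslant j\leqslant s$ I would set $I_j:=\Img(d_{-j})$, so that $I_s=0$, and record the two families of short exact sequences $0\to\Ker(d_{-j})\to P_{-j}\to I_j\to 0$ and $0\to I_{j-1}\to\Ker(d_{-j})\to H_{-j}(P_{\bullet})\to 0$, where $H_{-j}(P_{\bullet})\simeq M^{n_{-j}}$ for certain integers $n_{-j}\geqslant 0$. Since $\pd_{\mathcal{A}}(M^{n_{-j}})\leqslant p$ and projective dimension obeys the standard estimates along short exact sequences (together with the usual syzygy shift), a downward induction on $j$ starting from $I_s=0$ gives $\pd_{\mathcal{A}}\,I_0\leqslant p-1$. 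I expect this induction—and bookkeeping of the degenerate cases $s=0$ (where in fact $N\simeq M^{n_0}$) and of indices with $n_{-j}=0$—to be the only genuine obstacle; everything around it is formal.

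To finish, I would use the exact sequence $0\to H_0(P_{\bullet})\to N\to I_0\to 0$ with $H_0(P_{\bullet})\simeq M^{n_0}$ and $n_0\geqslant 1$ (the latter because $\mathrm{hsup}\,P_{\bullet}=0$). Applying $\Ext_{\mathcal{A}}^{\bullet}(-,X)$ and invoking $\pd_{\mathcal{A}}\,I_0\leqslant p-1$ yields $\Ext_{\mathcal{A}}^{p}(N,X)\cong\Ext_{\mathcal{A}}^{p}(M,X)^{n_0}$ for every object $X\in\mathcal{A}$. Since $p\geqslant 1$, the $(p-1)$-st syzygy $\Omega^{p-1}(M)$ is non-projective, so—using that $\mathcal{A}$ has enough projectives, hence that a non-projective object has nonvanishing $\Ext^1$—we get $\Ext_{\mathcal{A}}^{p}(M,X)\cong\Ext_{\mathcal{A}}^{1}(\Omega^{p-1}(M),X)\neq 0$ for a suitable $X$; therefore $\Ext_{\mathcal{A}}^{p}(N,X)\neq 0$ and $\pd_{\mathcal{A}}\,N\geqslant p$. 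Combined with $\qpd_{\mathcal{A}}\,M=\pd_{\mathcal{A}}\,N\leqslant\pd_{\mathcal{A}}\,M$, this gives $\qpd_{\mathcal{A}}\,M=\pd_{\mathcal{A}}\,M$, as desired.
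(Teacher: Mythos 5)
Your proposal is correct, and it follows the paper's strategy up to the key reduction: both arguments invoke Proposition \ref{general 3}$(a)$ to produce $N=\Coker(d_1)$ with $\qpd_{\mathcal{A}}\,M=\pd_{\mathcal{A}}\,N$, and both then devote all effort to showing $\pd_{\mathcal{A}}\,N\geqslant p$. The difference lies in how that inequality is obtained. The paper simply cites Proposition \ref{general 3}$(b)$, applied to $F_X=\Hom_{\mathcal{A}}(-,X)$ regarded as a right exact covariant functor into $(\mathbb{Z}\Modcat)\opp$, to transfer the top nonvanishing degree of $\Ext^{\bullet}_{\mathcal{A}}(M,X)$ from $M$ to $N$. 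You instead inline the computation: a downward induction along the negative part of the complex gives $\pd_{\mathcal{A}}\,\Img(d_0)\leqslant p-1$, whence $\Ext^p_{\mathcal{A}}(N,X)\cong\Ext^p_{\mathcal{A}}(M,X)^{n_0}$ from the sequence $0\to M^{n_0}\to N\to\Img(d_0)\to 0$. This is mathematically the same dimension-shifting that appears inside the proof of Proposition \ref{general 3}$(b)$ (the chain of isomorphisms $L^{d+j}F(\Img(d_0))\simeq\cdots\simeq L^{d+j+s}F(\Img(d_{-s}))=0$ is exactly your induction), so your version buys a self-contained and somewhat more elementary presentation---avoiding the opposite-category reinterpretation of a contravariant functor---at the cost of redoing work the paper has already packaged; it is, however, specific to $\Ext$ and would not recover the general statement about arbitrary right exact functors. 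Your concluding step, taking $X=\Omega^p(M)$ so that $\Ext^p_{\mathcal{A}}(M,X)\simeq\Ext^1_{\mathcal{A}}(\Omega^{p-1}(M),\Omega^p(M))\neq 0$, correctly supplies the test object whose existence the paper asserts without comment.
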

\begin{proof}
Clearly, $\qpd_{\mathcal{A}}\,M\leqslant{\rm pd}_{\mathcal{A}}\,M<\infty$. It suffices to show
${\rm pd}_{\mathcal{A}}\,M\leqslant\qpd_{\mathcal{A}}\,M$. Let $N$ be the object given in Proposition \ref{general 3}$(a)$. Then $\qpd_{\mathcal{A}}\,M=\pd_{\mathcal{A}}\,N$. We claim
${\rm pd}_{\mathcal{A}}\,M\leqslant \pd_{\mathcal{A}}\,N$.

In fact, for each object $X\in\mathcal{A}$, the contravariant functor $F_X\coloneqq\Hom_\mathcal{A}(-,X):\mathcal{A}\to \mathbb{Z}\Modcat$ can be regarded as a right exact, covariant functor from $\mathcal{A}$ to $(\mathbb{Z}\Modcat)\opp$ (that is the opposite category of $\mathbb{Z}\Modcat$). Then, for each $i\geqslant 0$, the $i$-th left-derived functor $L^iF_X$ of $F_X$ is exactly the $i$-th extension functor $\Ext_\mathcal{A}^i(-, X)$. Let $d_X\coloneqq\sup\{i\geqslant0\,|\,\Ext_\mathcal{A}^i(M, X)\not=0\}$.
By Proposition \ref{general 3}$(b)$, if $0\leqslant d_X<\infty$, then $d_X=\sup\{i\geqslant0\,|\,\Ext_\mathcal{A}^i(N, X)\not=0\}$. Let $n\coloneqq{\rm pd}_{\mathcal{A}}\,M<\infty$. Then $\Ext_\mathcal{A}^i(M, -)=0$ for $i>n$ and there exists an object $X\in\mathcal{A}$ such that $\Ext_\mathcal{A}^n(M, X)\neq 0$. This implies $d_X=n$. Thus $\Ext_\mathcal{A}^n(N, X)\not=0$ which forces
${\rm pd}_{\mathcal{A}}\,N\geqslant n$.
\end{proof}

\begin{Lem}\label{finitesum}
Let $M$ be an object of $\mathcal{A}$ with $\Ext_\mathcal{A}^n(M, M) = 0$ for
$2 \leqslant n \leqslant k+1$ with $k\in\mathbb{N}$. Suppose that $X_{\bullet}\in\mathscr{D}^{\rm b}(\mathcal{A})$ satisfies that $H_i(X_{\bullet})\in\add(M)$ for $0\leqslant i\leqslant k$ and  $H_i(X_{\bullet})=0$ for $i>k$ or $i<0$. Then $X_{\bullet} \simeq \bigoplus_{i=0}^{k} H_i(X_{\bullet})[i]$ in $\mathscr{D}^{\rm b}(\mathcal{A})$.
\end{Lem}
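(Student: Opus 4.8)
The plan is to prove the statement by induction on $k$, peeling off the top homology $H_k(X_\bullet)$ via a truncation triangle in $\mathscr{D}^{\rm b}(\mathcal{A})$ and showing that triangle splits. For the base case $k=0$, the complex $X_\bullet$ has homology concentrated in degree $0$ with $H_0(X_\bullet)\in\add(M)$, so $X_\bullet\simeq H_0(X_\bullet)$ in $\mathscr{D}^{\rm b}(\mathcal{A})$ by the standard identification of objects with their stalk complexes; no $\Ext$-vanishing is needed here.

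For the inductive step, suppose the result holds for $k-1$ and let $X_\bullet$ satisfy the hypotheses for $k\geqslant 1$. Using the canonical truncation functors on $\mathscr{D}^{\rm b}(\mathcal{A})$, I would form the distinguished triangle
$$\tau_{\leqslant k-1}X_\bullet \lra X_\bullet \lra H_k(X_\bullet)[k] \lra (\tau_{\leqslant k-1}X_\bullet)[1],$$
where the third term is the stalk complex $H_k(X_\bullet)[k]$ because $H_i(X_\bullet)=0$ for $i>k$. The truncation $Y_\bullet\coloneqq\tau_{\leqslant k-1}X_\bullet$ has $H_i(Y_\bullet)=H_i(X_\bullet)\in\add(M)$ for $0\leqslant i\leqslant k-1$ and vanishing homology outside $[0,k-1]$, and the hypothesis $\Ext^n_\mathcal{A}(M,M)=0$ for $2\leqslant n\leqslant (k-1)+1=k$ is exactly the instance of the hypothesis needed to invoke the induction hypothesis; hence $Y_\bullet\simeq\bigoplus_{i=0}^{k-1}H_i(X_\bullet)[i]$ in $\mathscr{D}^{\rm b}(\mathcal{A})$. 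It then remains to show the connecting morphism $\delta\colon H_k(X_\bullet)[k]\to Y_\bullet[1]\simeq\bigoplus_{i=0}^{k-1}H_i(X_\bullet)[i+1]$ is zero, for then the triangle splits and $X_\bullet\simeq Y_\bullet\oplus H_k(X_\bullet)[k]\simeq\bigoplus_{i=0}^k H_i(X_\bullet)[i]$.

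To see $\delta=0$, observe that
$$\Hom_{\mathscr{D}^{\rm b}(\mathcal{A})}\!\big(H_k(X_\bullet)[k],\,H_i(X_\bullet)[i+1]\big)\cong\Ext^{\,k-i-1}_\mathcal{A}\!\big(H_k(X_\bullet),H_i(X_\bullet)\big)$$
for $0\leqslant i\leqslant k-1$, where I use that $\Hom_{\mathscr{D}^{\rm b}(\mathcal{A})}(A,B[j])\cong\Ext^j_\mathcal{A}(A,B)$ for objects $A,B$ and $j\geqslant 0$ (and is $0$ for $j<0$). Since $H_k(X_\bullet)$ and $H_i(X_\bullet)$ both lie in $\add(M)$, the group $\Ext^{k-i-1}_\mathcal{A}(H_k(X_\bullet),H_i(X_\bullet))$ is a direct summand of a finite direct sum of copies of $\Ext^{k-i-1}_\mathcal{A}(M,M)$; this vanishes when $2\leqslant k-i-1\leqslant k+1$, i.e. for $0\leqslant i\leqslant k-3$. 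For $i=k-1$ the relevant group is $\Ext^0_\mathcal{A}(H_k,H_{k-1})=\Hom_\mathcal{A}(H_k,H_{k-1})$ and for $i=k-2$ it is $\Ext^1_\mathcal{A}(H_k,H_{k-2})$, neither of which the hypothesis controls — so a naive componentwise argument does not immediately finish the proof, and this is the point I expect to be the main obstacle.

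The fix is to not split off $H_k$ directly against a decomposed $Y_\bullet$, but instead to run the induction from the \emph{bottom}: use the triangle $H_0(X_\bullet)\to X_\bullet\to \tau_{\geqslant 1}X_\bullet\to H_0(X_\bullet)[1]$ and show the connecting map $\tau_{\geqslant 1}X_\bullet\to H_0(X_\bullet)[1]$ vanishes because $\tau_{\geqslant 1}X_\bullet$ is (by induction applied to the shifted complex, whose homology sits in degrees $1,\dots,k$ and is in $\add(M)$) isomorphic to $\bigoplus_{i=1}^k H_i(X_\bullet)[i]$, and then
$$\Hom_{\mathscr{D}^{\rm b}(\mathcal{A})}\!\big(H_i(X_\bullet)[i],\,H_0(X_\bullet)[1]\big)\cong\Ext^{\,i-1}_\mathcal{A}\!\big(H_i(X_\bullet),H_0(X_\bullet)\big)=0$$
for $i=1$ trivially ($\Ext^0$ into... no) — precisely, for $i=1$ the group is $\Hom_\mathcal{A}(H_1,H_0)$. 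So the truly clean route is to build the splitting degree by degree while simultaneously tracking that the surviving obstruction groups are always of the form $\Ext^{j}_\mathcal{A}(M,M)$ with $j\geqslant 2$: one shows by induction on the length of the homology window that for a complex with homology in $\add(M)$ spread over an interval of length $\ell$, the only potentially nonzero comparison maps between a fixed homology stalk $H_a[a]$ and a not-yet-split tail are governed by $\Ext^{b-a-1}_\mathcal{A}(M,M)$ with $b-a\geqslant 2$, i.e. $b-a-1\geqslant 1$; and then by a secondary induction (decreasing on $b-a$) or by a direct filtration argument using that the maps with $b-a-1=1$ land in $\Ext^1_\mathcal{A}(M,M)$ which — wait, that is also not assumed to vanish. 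Re-examining: the hypothesis is $\Ext^n_\mathcal{A}(M,M)=0$ for $2\leqslant n\leqslant k+1$ only, so $\Ext^1_\mathcal{A}(M,M)$ need not vanish and the decomposition is genuinely subtle; the resolution is that the $\Ext^1$-type obstructions between \emph{adjacent} homology degrees can be absorbed by iteratively modifying the quasi-isomorphism (a telescoping / bar-type argument on the truncation filtration), leaving only the $\Ext^{\geqslant 2}$ obstructions which the hypothesis kills. Concretely I would: (i) reduce to the case where $X_\bullet$ is a complex of projectives; (ii) use the projective resolutions of each $H_i(X_\bullet)\in\add(M)$ to build a map $\bigoplus_i H_i(X_\bullet)[i]\to X_\bullet$ in $\mathscr{K}^-(\mathcal{P})$ lifting the identity on homology, constructing it degreewise and correcting the lifts at each stage using the vanishing of $\Ext^{\geqslant 2}_\mathcal{A}(M,M)$ to kill the higher obstruction cocycles; (iii) conclude it is a quasi-isomorphism, hence an isomorphism in $\mathscr{D}^{\rm b}(\mathcal{A})$. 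This last approach avoids the connecting-map analysis entirely and is, I expect, the one the authors take; the main technical obstacle is the explicit obstruction-theory bookkeeping in step (ii), ensuring that exactly the hypothesized $\Ext$-groups suffice.
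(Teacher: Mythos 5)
There is a genuine gap, and it stems from a single orientation error that derails the whole attempt. With the paper's homological conventions ($(X[n])_i=X_{i-n}$, so $\Hom_{\mathscr{D}^{\rm b}(\mathcal{A})}(M,M[j])\simeq\Ext^j_{\mathcal{A}}(M,M)$), the truncation $\tau_{\geqslant k}X_{\bullet}$ is the \emph{sub}object and $\tau_{\leqslant k-1}X_{\bullet}$ the \emph{quotient}; the canonical triangle is
$$\tau_{\geqslant k}X_{\bullet}\lra X_{\bullet}\lra\tau_{\leqslant k-1}X_{\bullet}\lraf{f}(\tau_{\geqslant k}X_{\bullet})[1],$$
not the one you wrote with $\tau_{\leqslant k-1}X_{\bullet}$ mapping into $X_{\bullet}$ and $H_k(X_{\bullet})[k]$ as cone (such a triangle need not even exist). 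Once the triangle is oriented correctly, the connecting morphism $f$ lands in $\Hom(\tau_{\leqslant k-1}X_{\bullet},H_k(X_{\bullet})[k+1])$, and after decomposing $\tau_{\leqslant k-1}X_{\bullet}\simeq\bigoplus_{i=0}^{k-1}H_i(X_{\bullet})[i]$ by induction, its components lie in $\Hom(M[i],M[k+1])\simeq\Ext^{k+1-i}_{\mathcal{A}}(M,M)$ for $0\leqslant i\leqslant k-1$, i.e.\ exactly in $\Ext^n_{\mathcal{A}}(M,M)$ for $2\leqslant n\leqslant k+1$ --- precisely the groups the hypothesis kills. That is the paper's entire proof. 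Because you had the triangle backwards, your obstruction groups came out as $\Hom$ and low-degree $\Ext$ (and there is also a sign slip: $\Hom(A[k],B[i+1])\simeq\Ext^{i+1-k}(A,B)$, not $\Ext^{k-i-1}(A,B)$), which led you to conclude, incorrectly, that the componentwise splitting argument cannot work and that $\Ext^1(M,M)$-obstructions must be ``absorbed.'' They never arise.

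The same orientation error recurs in your ``bottom-up'' variant: the canonical triangle is $\tau_{\geqslant 1}X_{\bullet}\to X_{\bullet}\to H_0(X_{\bullet})\to(\tau_{\geqslant 1}X_{\bullet})[1]$, whose connecting map has components in $\Ext^{i+1}_{\mathcal{A}}(M,M)$ for $1\leqslant i\leqslant k$; that route also closes immediately. As submitted, however, your final strategy (i)--(iii) is only a sketch whose central step --- the ``obstruction-theory bookkeeping'' correcting lifts against $\Ext^1$ --- is explicitly left unresolved, so the proposal does not constitute a proof. Fix the direction of the truncation triangle and the simple inductive splitting argument you started with goes through verbatim.
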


\begin{proof}
We take induction on $k$  to show Lemma \ref{finitesum}.
Clearly, Lemma \ref{finitesum} holds trivially for $k=0$.
Let $k>0$. Taking the canonical truncation of $X_\bullet$ at degree $k$ yields
a distinguished triangle in $\mathscr{D}^{\rm b}(\mathcal{A})$:
$$\tau_{\geqslant k}X_{\bullet}\longrightarrow X_{\bullet}\longrightarrow\tau_{\leqslant k-1}X_{\bullet}\lraf{f}(\tau_{\geqslant k}X_{\bullet})[1]$$
where $\tau_{\geqslant k}X_{\bullet}\simeq H_k(X_{\bullet})[k]\in\add(M[k])$ and
$H_i(\tau_{\leqslant k-1}X_{\bullet})\simeq H_i(X_{\bullet})$ for $i\leqslant k-1$ but
$H_i(\tau_{\leqslant k-1}X_{\bullet})=0$ for $i\geqslant k$.
In particular, $H_i(\tau_{\leqslant k-1}X_{\bullet})\in\add(M)$ for $0\leqslant i\leqslant k-1$.
Now, the induction on $k-1$ implies that
$$\tau_{\leqslant k-1}X_{\bullet}\simeq \bigoplus_{i=0}^{k-1} H_i(\tau_{\leqslant k-1}X_{\bullet})[i]\simeq \bigoplus_{i=0}^{k-1} H_i(X_{\bullet})[i]\in\add(\bigoplus_{i=0}^{k-1}M[i]).$$
Since $\Ext_\mathcal{A}^n(M, M) = 0$ for
$2 \leqslant n \leqslant k+1$, we have $\Hom_{\mathscr{D}^{\rm b}(\mathcal{A})}(\bigoplus_{i=0}^{k-1}M[i], M[k+1])=0$. This forces $\Hom_{\mathscr{D}^{\rm b}(\mathcal{A})}(\tau_{\leqslant k-1}X_{\bullet}, (\tau_{\geqslant k}X_{\bullet})[1])=0$. Thus $f=0$ and
$X_{\bullet}\simeq \tau_{\geqslant k}X_{\bullet}\oplus\tau_{\leqslant k-1}X_{\bullet} \simeq \bigoplus_{i=0}^{k} H_i(X_{\bullet})[i]$ in $\mathscr{D}^{\rm b}(\mathcal{A})$.
\end{proof}

\begin{Theo}\label{inf1}
Suppose that $M\in\mathcal{A}$ has a finite quasi-projective resolution $P_{\bullet}$.
If $\Ext_\mathcal{A}^n(M, M) = 0$ for $2\leqslant n\leqslant
{\rm hsup}(P_{\bullet})-{\rm hinf}(P_{\bullet})+1$, then
${\rm pd}_{\mathcal{A}}(M)<\infty$.
\end{Theo}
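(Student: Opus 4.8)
The plan is to regard the given finite quasi-projective resolution $P_{\bullet}$ as an object of $\mathscr{D}^{\rm b}(\mathcal{A})$, split it by means of Lemma~\ref{finitesum}, and then show that $M$ is a direct summand in $\mathscr{D}^{\rm b}(\mathcal{A})$ of a bounded complex of projective objects; from this $\pd_{\mathcal{A}}(M)<\infty$ will follow by an $\Ext$-vanishing argument. We may assume $M\neq 0$.

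First I would normalize $P_{\bullet}$. Replacing $P_{\bullet}$ by $P_{\bullet}[-\mathrm{hinf}(P_{\bullet})]$ keeps it a finite quasi-projective resolution of $M$ and does not change $k:=\mathrm{hsup}(P_{\bullet})-\mathrm{hinf}(P_{\bullet})$, so we may assume $H_i(P_{\bullet})\simeq M^{n_i}$ for $0\leqslant i\leqslant k$, with $H_i(P_{\bullet})=0$ for $i>k$ or $i<0$, and with the non-negative integers $n_i$ not all zero. Under this normalization the hypothesis $\Ext_{\mathcal{A}}^n(M,M)=0$ for $2\leqslant n\leqslant \mathrm{hsup}(P_{\bullet})-\mathrm{hinf}(P_{\bullet})+1$ becomes exactly $\Ext_{\mathcal{A}}^n(M,M)=0$ for $2\leqslant n\leqslant k+1$, which is precisely the hypothesis of Lemma~\ref{finitesum} applied to $X_{\bullet}=P_{\bullet}$. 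Hence $P_{\bullet}\simeq\bigoplus_{i=0}^{k}M^{n_i}[i]$ in $\mathscr{D}^{\rm b}(\mathcal{A})$. Fixing an index $i_0$ with $n_{i_0}\geqslant 1$, I conclude that $M[i_0]$, and therefore $M$ itself, is a direct summand in $\mathscr{D}^{\rm b}(\mathcal{A})$ of $P_{\bullet}[-i_0]$, a bounded complex of projective objects.

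It remains to pass from this to $\pd_{\mathcal{A}}(M)<\infty$. Put $u:=\sup\big(P_{\bullet}[-i_0]\big)<\infty$. For any object $X\in\mathcal{A}$ and any integer $n>u$, since $P_{\bullet}[-i_0]$ is a bounded-above complex of projectives we have $\Ext_{\mathcal{A}}^n\big(P_{\bullet}[-i_0],X\big)\cong\Hom_{\mathscr{K}(\mathcal{A})}\big(P_{\bullet}[-i_0],X[n]\big)$, and this vanishes because $P_{\bullet}[-i_0]$ is concentrated in degrees $\leqslant u<n$. Since $M$ is a direct summand of $P_{\bullet}[-i_0]$ in $\mathscr{D}^{\rm b}(\mathcal{A})$, the group $\Ext_{\mathcal{A}}^n(M,X)$ is a direct summand of $\Ext_{\mathcal{A}}^n\big(P_{\bullet}[-i_0],X\big)$, hence $\Ext_{\mathcal{A}}^n(M,X)=0$ for every $X\in\mathcal{A}$ and every $n>u$. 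Now a routine syzygy argument finishes the proof: choosing a deleted projective resolution $(Q_i)$ of $M$, the short exact sequence $0\to\Omega^{u+1}(M)\to Q_u\to\Omega^{u}(M)\to 0$ splits because $\Ext_{\mathcal{A}}^1\big(\Omega^{u}(M),\Omega^{u+1}(M)\big)\cong\Ext_{\mathcal{A}}^{u+1}\big(M,\Omega^{u+1}(M)\big)=0$, so $\Omega^{u}(M)$ is projective and $\pd_{\mathcal{A}}(M)\leqslant u<\infty$.

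I do not expect a genuine obstacle once Lemma~\ref{finitesum} is in hand; the two points requiring care are the initial shift, which must be arranged so that the hypothesis on $\Ext_{\mathcal{A}}^n(M,M)$ matches exactly the range $2\leqslant n\leqslant k+1$ demanded by Lemma~\ref{finitesum}, and the final step, where one cannot simply invoke closure of $\mathscr{K}^{\rm b}(\mathcal{P})$ under direct summands inside $\mathscr{D}^{\rm b}(\mathcal{A})$ and must instead bound $\pd_{\mathcal{A}}(M)$ directly from the vanishing of $\Ext_{\mathcal{A}}^n(M,-)$ in high degrees.
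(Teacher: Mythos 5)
Your argument is correct and takes essentially the same route as the paper: both proofs normalize the resolution, apply Lemma~\ref{finitesum} to split $P_{\bullet}\simeq\bigoplus_i M^{n_i}[i]$ in the derived category, and deduce that $M$ is a direct summand of a bounded complex of projectives. The only (harmless) divergence is in the last step, where the paper lifts the splitting to $\mathscr{K}^{-}(\mathcal{P})$ and cites closure of $\mathscr{K}^{\rm b}(\mathcal{P})$ under direct summands, while you instead prove that fact in effect by hand via the vanishing of $\Ext^n_{\mathcal{A}}(M,-)$ for $n>u$ and a syzygy argument; both are valid.
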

\begin{proof}
Let $t\coloneqq{\rm hsup}(P_{\bullet})$ and $s\coloneqq{\rm hinf}(P_{\bullet})$. Then $t$ and $s$ are finite with $s\leqslant t$. Since $P_{\bullet}$ is a finite quasi-projective resolution of $M$, we see that $P_{\bullet}\in\mathscr{C}^{\rm b}(\mathcal{P})$, $H_i(P_{\bullet})\simeq M^{a_i}$ for $s\leqslant i\leqslant t$ and  $H_i(P_{\bullet})=0$ for $i>t$ or $i<s$, where $a_i$ are nonnegative integers and not all zero. Suppose that $\Hom_{\mathscr{D}^{-}(\mathcal{A})}(M,M[n])=0$ for $2\leqslant n\leqslant
t-s+1$. By Lemma \ref{finitesum},
$P_{\bullet} \simeq \bigoplus_{i=s}^t H_i(P_{\bullet})[i]\simeq \bigoplus_{i=s}^t M^{a_i}[i]$ in $\mathscr{D}^{-}(\mathcal{A})$. Now, let $Q_{\bullet}$ be a deleted projective resolution of $M$. Then there is an isomorphism $P_{\bullet}\simeq\bigoplus_{i=s}^{t} Q_{\bullet}^{a_i}[i]$ in $\mathscr{D}^{-}(\mathcal{A})$, where both sides are bounded below complexes of projective objects. Since the localization functor $\mathscr{K}^{{-}}(\mathcal{P})\to \mathscr{D}^{-}(\mathcal{A})$ is a triangle equivalence, it follows that $P_{\bullet}\simeq\bigoplus_{i=s}^{t} Q_{\bullet}^{a_i}[i]$ in $\mathscr{K}^{-}(\mathcal{P})$. Note that $\mathscr{K}^{{\rm b}}(\mathcal{P})$ is closed under direct summands in $\mathscr{K}^{-}(\mathcal{P})$. Since $P_{\bullet}\in \mathscr{K}^{{\rm b}}(\mathcal{P})$ and at least one $a_i$ is not zero, we have $Q_{\bullet}\in\mathscr{K}^{{\rm b}}(\mathcal{P})$. This implies ${\rm pd}_{\mathcal{A}}(M)<\infty$.
\end{proof}

Theorem \ref{inf1} implies the following result.
\begin{Koro}\label{VP}
Let $M\in\mathcal{A}$. If $\qpd_{\mathcal{A}}(M)<\infty$ and $\Ext_\mathcal{A}^n(M,M)=0$ for $n\geqslant 2$, then ${\rm pd}_{\mathcal{A}}(M)<\infty$.
\end{Koro}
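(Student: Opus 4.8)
The plan is to deduce Corollary \ref{VP} directly from Theorem \ref{inf1}, which is the real content. The hypothesis $\qpd_{\mathcal{A}}(M)<\infty$ means, by Definition \ref{def3.1(1)}, that $M$ admits a \emph{finite} quasi-projective resolution $P_{\bullet}\in\mathscr{C}^{\rm b}(\mathcal{P})$; in particular $P_{\bullet}$ is bounded, so both $\mathrm{hsup}(P_{\bullet})$ and $\mathrm{hinf}(P_{\bullet})$ are finite integers, and hence the quantity $\mathrm{hsup}(P_{\bullet})-\mathrm{hinf}(P_{\bullet})+1$ appearing in Theorem \ref{inf1} is a finite natural number.

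First I would fix such a finite quasi-projective resolution $P_{\bullet}$ of $M$ and set $N\coloneqq \mathrm{hsup}(P_{\bullet})-\mathrm{hinf}(P_{\bullet})+1<\infty$. The global hypothesis $\Ext_\mathcal{A}^n(M,M)=0$ for \emph{all} $n\geqslant 2$ certainly implies, a fortiori, that $\Ext_\mathcal{A}^n(M,M)=0$ for the finite range $2\leqslant n\leqslant N$. This is exactly the hypothesis required to invoke Theorem \ref{inf1}, which then yields ${\rm pd}_{\mathcal{A}}(M)<\infty$. That is the whole argument; there is essentially no obstacle, since the vanishing condition we are given is strictly stronger than the one Theorem \ref{inf1} needs.

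Two small points are worth making explicit in the write-up. One is the degenerate case: if $M$ is projective (equivalently $M=0$ or $P_{\bullet}$ can be taken concentrated in a single degree), then ${\rm pd}_{\mathcal{A}}(M)=0<\infty$ trivially, so we may assume $M$ is non-projective and that a genuine finite quasi-projective resolution has been chosen. The other is simply to recall, for the reader, that finiteness of $\qpd_{\mathcal{A}}(M)$ is by definition the existence of a finite quasi-projective resolution — so no extra work is needed to produce the complex $P_{\bullet}$ that feeds into Theorem \ref{inf1}. With these remarks the corollary follows in a couple of lines.
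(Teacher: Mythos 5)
Your proposal is correct and is essentially the paper's own argument: the paper derives Corollary \ref{VP} from Theorem \ref{inf1} in exactly this way, noting only that finiteness of $\qpd_{\mathcal{A}}(M)$ supplies a finite quasi-projective resolution $P_{\bullet}$, for which $\mathrm{hsup}(P_{\bullet})-\mathrm{hinf}(P_{\bullet})+1$ is finite, so the full vanishing hypothesis $\Ext_{\mathcal{A}}^n(M,M)=0$ for all $n\geqslant 2$ covers the required finite range. Nothing further is needed.
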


We present two direct corollaries of Corollary \ref{VP}.

\begin{Koro}\label{AR}
Let $R$ be a left coherent ring and let $M$ be a finitely presented $R$-module with finite quasi-projective dimension such that ${\rm Ext}_{R}^n(M,M\oplus R)=0$ for all $n>0$. Then $M$ is projective.
\end{Koro}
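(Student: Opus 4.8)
The strategy is to combine Corollary \ref{VP} with the finiteness hypothesis on $\qpd_R(M)$, handling the module $R$ separately from the module $M$. First I would record that, since $R$ is left coherent, $R\modcat$ is an abelian category with enough projective objects (namely $\prj{R}$), so all the results of the section apply with $\mathcal{A}=R\modcat$ and $M\in R\modcat$. The hypothesis $\Ext_R^n(M,M\oplus R)=0$ for $n>0$ splits into $\Ext_R^n(M,M)=0$ for $n>0$ and $\Ext_R^n(M,R)=0$ for $n>0$; the first of these in particular gives $\Ext_R^n(M,M)=0$ for all $n\geqslant 2$, which is exactly the hypothesis needed to invoke Corollary \ref{VP}.

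The main step is then immediate: since $\qpd_R(M)<\infty$ and $\Ext_R^n(M,M)=0$ for $n\geqslant 2$, Corollary \ref{VP} yields $\pd_R(M)<\infty$. Write $p\coloneqq\pd_R(M)$ and suppose, for contradiction, that $p>0$. Take a minimal-length projective resolution $0\to Q_p\to Q_{p-1}\to\cdots\to Q_0\to M\to 0$ in $R\modcat$ (all $Q_i$ finitely generated projective). Applying $\Hom_R(-,R)$ and using $\Ext_R^n(M,R)=0$ for $1\leqslant n\leqslant p$, one gets that the dual complex $0\to Q_0^{*}\to Q_1^{*}\to\cdots\to Q_p^{*}\to 0$ is exact except at the right end, i.e. it is a projective resolution (over $R\opp$) of the transpose/cokernel $\Coker(Q_{p-1}^{*}\to Q_p^{*})$; in particular $\Ext_R^p(M,R)=0$ forces the map $Q_{p-1}^{*}\to Q_p^{*}$ to be a split epimorphism, so $Q_p\to Q_{p-1}$ is a split monomorphism, contradicting minimality. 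Hence $p=0$, that is, $M$ is projective.

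The step I expect to require the most care is the last one — deducing $\pd_R(M)=0$ from $\pd_R(M)<\infty$ together with $\Ext_R^n(M,R)=0$ for all $n>0$ — because it is precisely the statement that the Auslander--Reiten condition forces a module of finite projective dimension to be projective, and it must be phrased so as to work over a left coherent (not necessarily Noetherian) ring with $M$ only finitely presented. The clean way to handle it is to observe that over a left coherent ring a finitely presented module with finite projective dimension admits a projective resolution by finitely generated projectives, so that $\Hom_R(-,R)$ behaves well: the argument above via minimal resolutions goes through verbatim, or alternatively one can cite the standard fact (valid in this generality) that $\Ext_R^{\pd_R M}(M,R)\neq 0$ whenever $\pd_R M<\infty$ and $M$ is nonzero finitely presented. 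Either way, combined with $\Ext_R^n(M,R)=0$ for all $n>0$ this gives $\pd_R M=0$, completing the proof.
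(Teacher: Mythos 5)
Your proposal is correct and follows essentially the same route as the paper: apply Corollary \ref{VP} in $\mathcal{A}=R\modcat$ to get $\pd_R(M)<\infty$, then use $\Ext_R^n(M,R)=0$ to force $\pd_R(M)=0$. The only cosmetic difference is in the last step, where the paper splits off the top syzygy directly from $\Ext_R^m(M,P_m)=0$ rather than dualizing the resolution, but both are the same standard argument.
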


\begin{proof}
Let $\mathcal{A}\coloneqq R\modcat$. This is an abelian category with enough projective objects since $R$ is left coherent. By Corollary \ref{VP}, ${\rm pd}_R(M)<\infty$. Let $m\coloneqq{\rm pd}_R(M)$. We claim $m=0$.

In fact, since ${_R}M$ is finitely presented, it has a projective resolution $0\to P_m\stackrel{f_m}{\longrightarrow}P_{m-1}\to\cdots\to P_0\to M \to 0$ with all $P_i\in \prj{R}$ for $0\leqslant i\leqslant m$. If $m>0$, then the assumption ${\rm Ext}_{R}^m(M,R)=0$ implies ${\rm Ext}_{R}^m(M,P_m)=0$, and therefore $f_m$ is a split injection. This leads to $\pd_R(M)<m$, a contradiction. Thus $m=0$ and $M$ is projective.
\end{proof}

Recall that a ring $R$ is \emph{quasi-Frobenius} if all projective left $R$-modules are injective, or equivalently, all injective left $R$-modules are projective. Note that if $R$ is quasi-Frobenius, then it is left and right Noetherian and Artinian, and both $R\Modcat$ and $R\modcat$ are Frobenius abelian categories.

\begin{Koro}\label{tach}
Let $R$ be a quasi-Frobenius ring and let $M$ be an $R$-module with finite quasi-projective dimension. If ${\rm Ext}_{R}^n(M,M)=0$ for $n\geqslant 2$, then $M$ is projective.
\end{Koro}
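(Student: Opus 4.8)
The plan is to reduce the statement to Corollary \ref{VP} via an elementary observation about self-orthogonality over a quasi-Frobenius ring. First I would recall that over a quasi-Frobenius ring $R$, the category $R\Modcat$ is a Frobenius abelian category: projective modules coincide with injective modules. In particular every projective module is injective, so $\Ext_R^n(M,P)=0$ for every projective $P$ and every $n\geqslant 1$, simply because injective modules have no higher $\Ext$ into them. Hence from the hypothesis $\Ext_R^n(M,M)=0$ for $n\geqslant 2$ we automatically get $\Ext_R^n(M,M\oplus R)=0$ for $n\geqslant 2$ as well; the only thing missing to invoke a clean self-orthogonality statement at all degrees $n\geqslant 1$ is the vanishing of $\Ext^1_R(M,M)$, which is not assumed. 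So the argument cannot go directly through Corollary \ref{AR}; it must use Corollary \ref{VP} instead, which requires only $\Ext^n_\mathcal{A}(M,M)=0$ for $n\geqslant 2$.

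The key steps, in order, are: (1) Apply Corollary \ref{VP} with $\mathcal{A}=R\Modcat$ (abelian with enough projectives) to conclude from $\qpd_R(M)<\infty$ and $\Ext_R^n(M,M)=0$ for all $n\geqslant 2$ that $\pd_R(M)<\infty$. (2) Now use that $R$ is quasi-Frobenius, hence $R\Modcat$ is a Frobenius abelian category, so an $R$-module of finite projective dimension must in fact be projective — this is exactly the observation already used in the proof of Corollary \ref{FB}, namely that in a Frobenius category the only objects of finite projective dimension are the projective ones (if $0\to P_n\to\cdots\to P_0\to M\to 0$ is a shortest projective resolution with $n\geqslant 1$, then $P_n=\Omega^n M$ is projective hence injective, so $P_n\to P_{n-1}$ splits and $n$ was not minimal). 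Combining (1) and (2) gives that $M$ is projective.

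I do not expect a genuine obstacle here: the statement is essentially a specialization of Corollary \ref{VP} plus the structural fact about Frobenius categories. The only point requiring a little care is resisting the temptation to route through Corollary \ref{AR}: that corollary hypothesizes $\Ext^n_R(M,M\oplus R)=0$ for \emph{all} $n>0$, which over a general coherent ring is strictly stronger than what we have; it is only because $R$ is quasi-Frobenius (so projectives are injective, killing the $\Ext^n(M,R)$ and also forcing "finite pd $\Rightarrow$ projective") that the weaker hypothesis $\Ext^n_R(M,M)=0$ for $n\geqslant 2$ suffices. Alternatively, one could phrase the whole thing in one line: by Corollary \ref{FB}, $\qpd_R(M)\in\{0,\infty\}$, so $\qpd_R(M)<\infty$ would give $\qpd_R(M)=0$ — but that alone does not yet yield projectivity without the self-orthogonality input, so the Corollary \ref{VP} route is the honest one and I would present that.
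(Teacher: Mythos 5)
Your proposal is correct and follows exactly the paper's own two-step argument: apply Corollary \ref{VP} to get $\pd_R(M)<\infty$, then use that over a quasi-Frobenius ring every module of finite projective dimension is projective. The additional remarks about why Corollary \ref{AR} is not the right route are sensible but not needed.
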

\begin{proof}
By Corollary \ref{VP}, ${\rm pd}_R(M)<\infty$. Since $R$ is quasi-Frobenius, each $R$-module of finite projective dimension is projective. Thus $M$ is projective.
\end{proof}

Corollary \ref{VP} provides a method for constructing modules with infinite quasi-projective dimension.

\begin{Bsp}\label{Counterexample}
\rm{
Let $A$ be an algebra over a field $K$ presented by the quiver
$$
\begin{tikzcd}
1 \arrow[r, "\alpha"] & 2 \arrow["\beta", loop, distance=2em, in=325, out=35]
\end{tikzcd}
$$
with relations:$\beta\alpha=0=\beta^2$, where the composition of arrows is always taken from right to left. Let $S_1$ and $S_2$ be simple $A$-modules corresponding to the vertices $1$ and $2$, respectively. It can be checked that $\Omega(S_1)=S_2$ and $\Omega(S_2)=S_2$.
This implies ${\rm pd}_{A}(S_1)=\infty={\rm pd}_{A}(S_2)$ and $\Ext_{A}^i(S_1,S_1)=0$ for $i>0$ (in fact, $S_1$ is injective). Thus $\qpd_{A}(S_1)=\infty$ by {\rm Corollary \ref{VP}}. Moreover, since $S_2$ is periodic, $\qpd_{A}(S_2)=0$ by {\rm Theorem \ref{QPP}(3)}.

Note that there exists an exact sequence $0\to S_2\to P_1\to S_1\to 0$ of $A$-modules, where $P_1$ is the projective cover of $S_1$. Obviously, the inequality $\qpd_{A}(S_1)\leqslant \qpd_{A}(S_2)+1$ does not hold.
But we always have $\pd_{A}(M)\leqslant \pd_{A}(\Omega(M))+1$ for any $M\in\mathcal{A}$. This also shows that quasi-projective dimension is different from projective dimension.
}
\end{Bsp}

Next, we provide a sufficient condition for the inequality $\qpd_{\mathcal{A}}(M)\leqslant\qpd_{\mathcal{A}}(\Omega(M))+1$ to hold.

\begin{Prop}\label{ext}
Let $0\to N\to E\to M\to 0$ be an exact sequence in ${\mathcal{A}}$ such that $E$ is projective-injective. Then $\qpd_{\mathcal{A}}(M)\leqslant\qpd_{\mathcal{A}}(N)+1$.
\end{Prop}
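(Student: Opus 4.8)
The plan is to start from the case $\qpd_{\mathcal{A}}(N)=\infty$, which is trivial, and then assume $q:=\qpd_{\mathcal{A}}(N)<\infty$. By Proposition \ref{general 3}(a) (applied with $\mathcal{P}$ the class of all projectives), $N$ has a finite quasi-projective resolution $Q_{\bullet}\in\mathscr{C}^{\mathrm b}(\mathcal{P})$ with $\mathrm{hsup}(Q_{\bullet})=0$ and $\sup(Q_{\bullet})=q$, so that $N^{n_0}\simeq H_0(Q_{\bullet})$ for some $n_0>0$ and $H_i(Q_{\bullet})\simeq N^{n_i}$ for all $i\leqslant 0$. First I would take the direct sum of $n_0$ copies of the given exact sequence to obtain $0\to N^{n_0}\to E^{n_0}\to M^{n_0}\to 0$ with $E^{n_0}$ still projective-injective. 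The idea is then to splice this short exact sequence onto $Q_{\bullet}$ at the top degree: replace the final map $Q_1\to Q_0$ by the composite $Q_1\to N^{n_0}\hookrightarrow E^{n_0}$ (using $H_0(Q_{\bullet})\simeq N^{n_0}=\mathrm{Coker}(d_1^Q)$), and attach $E^{n_0}$ in degree $0$ with the old $Q_0$ moving to degree $-1$; concretely I would set $P_{\bullet}$ to be the complex $0\to Q_q\to\cdots\to Q_2\to Q_1\to E^{n_0}\to Q_0\to Q_{-1}\to\cdots\to Q_{-s}\to 0$ with the map $E^{n_0}\to Q_0$ induced suitably from the presentation $Q_0\to N^{n_0}$, or — more cleanly — realise $P_{\bullet}$ as the mapping cone of a chain map between $Q_{\bullet}$ and the stalk complex $E^{n_0}[0]$ so that the long exact homology sequence can be read off mechanically.

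The key computation is that $P_{\bullet}$ is again a finite quasi-projective resolution, now \emph{of} $M$, with homology shifted. Since $E^{n_0}$ is projective it is a legitimate term of a complex in $\mathscr{C}^{\mathrm b}(\mathcal{P})$. Computing homology: in degree $1$ the new cycle/boundary data gives $H_1(P_{\bullet})\simeq M^{n_0}$ (from $0\to N^{n_0}\to E^{n_0}\to M^{n_0}\to 0$ together with exactness of $Q_{\bullet}$ in positive degrees), while for $i\leqslant 0$ and $i\geqslant 2$ the homology is unchanged up to a shift, so $H_i(P_{\bullet})\simeq N^{n_{i-1}}$ there. Because $N$ itself has finite quasi-projective dimension, each such $H_i(P_{\bullet})$ can in turn be replaced (by the standard "insert a quasi-projective resolution in each homology degree" procedure, i.e. the truncation/extension argument already used in the proof of Proposition \ref{general 3}) so that the whole of $P_{\bullet}$ becomes a bona fide finite quasi-projective resolution of $M$ — all of whose nonzero homology is a power of $M$. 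The upshot is a finite quasi-projective resolution $P_{\bullet}'$ of $M$ with $\mathrm{hsup}(P_{\bullet}')=1$ and $\sup(P_{\bullet}')\leqslant q+1$, whence $\qpd_{\mathcal{A}}(M)\leqslant\sup(P_{\bullet}')-\mathrm{hsup}(P_{\bullet}')\leqslant q$. Wait — that would give $\le q$, not $\le q+1$; I would instead keep the bookkeeping honest: the clean statement is that after splicing, $P_{\bullet}'\in\mathscr{C}^{\mathrm b}(\mathcal{P})$ has $\sup\le q+1$ and $\mathrm{hsup}=1$, forcing only $\qpd_{\mathcal{A}}(M)\le q+1$ once one accounts for the extra projective $E^{n_0}$ possibly raising the top degree without raising $\mathrm{hsup}$, plus the syzygies needed to make the lower homologies powers of $M$, which may add one more degree.

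The main obstacle is the \emph{replacement step}: after splicing, the homology in degrees $\le 0$ is a power of $N$, not of $M$, so $P_{\bullet}$ as constructed is not yet a quasi-projective resolution of $M$. The resolution is exactly the mechanism used inside Proposition \ref{general 3}: since $\qpd_{\mathcal{A}}(N)<\infty$, each homology object $N^{n_{i-1}}$ has a finite quasi-projective resolution, and one can use the truncation–extension ($\ast$) description in $\mathscr{D}^{-}(\mathcal{A})$ to build a complex of projectives, isomorphic to $P_{\bullet}$ in the derived category, whose homology in every degree is the corresponding power of $M$ (not $N$) — here one uses that $N$ and $M$ sit in the same "quasi-projective resolution tower" because $0\to N\to E\to M\to 0$ with $E$ projective-injective forces $\Omega(M)\simeq N$ up to projective summands, hence the quasi-projective resolutions match after a shift. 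Controlling the top degree through this process, and checking that projective-injectivity of $E$ (not mere projectivity) is genuinely used — it is needed so that $E$ can occur as a term of the complex \emph{and} so that the short exact sequence does not obstruct the homology computation, exactly the phenomenon that fails in Example \ref{Counterexample} — is the delicate part; the rest is the standard syzygy bookkeeping already exhibited in this section.
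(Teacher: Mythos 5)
Your core idea --- feeding the short exact sequence $0\to N\to E\to M\to 0$ into a finite quasi-projective resolution of $N$ so that homology built from $N$ becomes homology built from $M$, at the cost of one extra projective term --- is the right one, and it is indeed where projective-injectivity of $E$ enters. But there is a genuine gap in how you carry it out: you perform the conversion only in the top homology degree. A finite quasi-projective resolution $Q_{\bullet}$ of $N$ with $\mathrm{hsup}(Q_{\bullet})=0$ in general has $H_i(Q_{\bullet})\simeq N^{n_i}$ in several degrees $i\leqslant 0$, and after your splice those lower homologies are still powers of $N$, so the resulting complex is not a quasi-projective resolution of $M$. Your proposed repair --- inserting a finite quasi-projective resolution of $N^{n_{i-1}}$ in each remaining homology degree --- cannot work as stated: any such insertion again produces homology that is a power of $N$, never of $M$, whereas a quasi-projective resolution of $M$ requires \emph{every} nonzero homology to be a power of $M$ itself. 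The appeal to ``$\Omega(M)\simeq N$ up to projectives, hence the quasi-projective resolutions match after a shift'' is essentially the statement being proved (and it fails without injectivity of $E$, cf.\ Example \ref{Counterexample}), so it cannot be invoked here. There is also a local error in the splice itself: the composite $Q_1\to H_0(Q_{\bullet})\simeq N^{n_0}\hookrightarrow E^{n_0}$ that you propose as the new differential is the zero map, since the image of $Q_1$ in $\Ker(d_0^Q)$ is exactly the boundary subobject, which dies in $H_0$. Finally, your bookkeeping oscillates between the (false) bound $\qpd_{\mathcal{A}}(M)\leqslant q$ and the desired $q+1$, which is a symptom of the construction not being pinned down.

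The repair is to do the conversion in every degree simultaneously, which is what the paper's proof does. For each $i$, injectivity of $E$ lets one lift the composite $\Ker(d_i)\twoheadrightarrow H_i(Q_{\bullet})\simeq N^{a_i}\hookrightarrow E^{a_i}$ to a morphism $h_i\colon Q_i\to E^{a_i}$; these assemble into a chain map $h_{\bullet}$ from $Q_{\bullet}$ to the complex $E_{\bullet}$ with terms $E^{a_i}$ and zero differentials, which lies in $\mathscr{C}^{\rm b}(\mathcal{P})$ because $E$ is also projective. The long exact homology sequence of the triangle $Q_{\bullet}\to E_{\bullet}\to\Cone(h_{\bullet})\to Q_{\bullet}[1]$ shows that $H_i(h_{\bullet})$ is a monomorphism with cokernel $M^{a_i}$ in every degree, so $H_i(\Cone(h_{\bullet}))\simeq M^{a_i}$ for all $i$, with $\mathrm{hsup}(\Cone(h_{\bullet}))=0$ and $\sup(\Cone(h_{\bullet}))=m+1$. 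This gives $\qpd_{\mathcal{A}}(M)\leqslant m+1$ directly, with no replacement step needed.
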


\begin{proof}

The inequality in Proposition \ref{ext} automatically holds if $\qpd_{\mathcal{A}}(N)=\infty$. So, we assume $\qpd_{\mathcal{A}}(N)=m<\infty$.
Then $N$ admits a quasi-projective resolution of the form
$$
\begin{tikzcd}
P_{\bullet}:\quad0 \arrow[r] & P_m \arrow[r, "d_m"] & P_{m-1} \arrow[r, "d_{m-1}"] & \cdots \arrow[r, "d_1"] & P_0 \arrow[r, "d_0"] & \cdots \arrow[r, "d_{-r+1}"] & P_{-r} \arrow[r, "d_{-r}"] & 0,
\end{tikzcd}
$$
where $P_m\neq 0$, ${\rm hsup}(P_{\bullet})=0$, $H_{-r}(P_{\bullet})\not=0$, and for each $i\in\mathbb{Z}$,
there is an isomorphsim $g_i: H_i(P_{\bullet})\to N^{a_i}$ for some integers $a_i\in\mathbb{N}$ (not all zero). Let $0\to N\lraf{f} E\lra M\to 0$ be the exact sequence in Proposition \ref{ext}.
Since $E$ is injective, the composition
$$
\Ker(d_i)\twoheadrightarrow H_{i}(P_{\bullet})\lraf{g_i} N^{a_i}\lraf{f^{a_i}}E^{a_i}
$$ can be lifted to a morphism $h_i:P_i\longrightarrow E^{a_i}$ in $\mathcal{A}$. This is illustrated by the following commutative diagram
$$
\begin{tikzcd}
&P_{i+1}\arrow[dl, "d_{i+1}"'] \arrow[rr, "d_{i+1}"] &                                       & P_i \arrow[d, "h_i"] \\
{\Img(d_{i+1})}  \arrow[r, hook] &{\Ker(d_i)} \arrow[r] \arrow[rru, hook]         & H_i(P_{\bullet})\lraf{g_i} N^{a_i} \arrow[r, "f^{a_i}"] & E^{a_i}.
\end{tikzcd}
$$Clearly, $h_id_{i+1}=0$. Let
$$
E_{\bullet}\coloneqq0\longrightarrow E^{a_m}\lraf{0}\cdots\lraf{0} E^{a_0}\lra\cdots\lraf{0} E^{a_{-r}}\longrightarrow 0
$$
be a bounded complex over $\mathcal{A}$ with all differentials zero.
Since $E$ is projective, we have $E_\bullet\in\mathscr{C}^{\rm b}(\mathcal{P})$.
Set $h_{\bullet}\coloneqq(h_i)_{i\in\mathbb{Z}}: P_{\bullet}\lra E_{\bullet}$. Then
$h_{\bullet}$ is a chain map in $\mathscr{C}^{\rm b}(\mathcal{P})$. Now, we extend the chain map to
a distinguished triangle $P_{\bullet}\lraf{h_{\bullet}} E_{\bullet}\lra\Cone(h_{\bullet})\lra P_{\bullet}[1]$ in $\mathscr{K}^{\rm b}(\mathcal{P})$, where $\Cone(h_{\bullet})$ denotes the mapping
cone of $h_\bullet$.
By \cite[Corollary 10.1.4]{1994Weibel}, there is a long exact sequence
$$
\begin{tikzcd}
\cdots \arrow[r] & H_i(P_{\bullet}) \arrow[r, "H_i(h_{\bullet})"] & H_i(E_{\bullet}) \arrow[r] & H_i(\Cone(h_{\bullet})) \arrow[r] & H_{i-1}(P_{\bullet}) \arrow[r, "H_{i-1}(h_{\bullet})"] & \cdots
\end{tikzcd}.
$$
Clearly, $H_i(h_{\bullet})=f^{a_i}g_i$. Since $f$ is a monomorphism and $g_i$ is an isomorphsim, we obtain a short exact sequence
$$
0\longrightarrow H_i(P_{\bullet})\lraf{H_{i}(h_{\bullet})}H_i(E_{\bullet})\longrightarrow H_i(\Cone(h_{\bullet}))\longrightarrow 0.
$$
It follows from $\Coker(f)\simeq M$ that $H_i(\Cone(h_{\bullet}))\simeq M^{a_i}$. Thus $\Cone(h_{\bullet})$ is a finite quasi-projective resolution of $M$. By ${\rm hsup}(P_{\bullet})=0$, we have $a_i=0$ for $i>0$ and $a_0\neq 0$. This forces ${\rm hsup}(\Cone(h_\bullet))=0$. Note that $\sup(\Cone(h_{\bullet}))=m+1$ beacuse the $(m+1)$-th term of the complex $\Cone(h_\bullet)$ is exactly $P_m$. Therefore $\qpd_{\mathcal{A}}(M)\leqslant \sup(\Cone(h_\bullet))-{\rm hsup}(\Cone(h_\bullet))=m+1=\qpd_{\mathcal{A}}(N)+1.$
\end{proof}

\begin{Koro}\label{summand}
Let $\mathcal{A}$ be a Frobenius abelian category . For any $M\in\mathcal{A}$, $P\in\mathcal{P}$ and $i\in\mathbb{Z}$, $$\qpd_\mathcal{A}(M\oplus P)=\qpd_\mathcal{A}(M)=\qpd_\mathcal{A}(\Omega^i(M)).$$
\end{Koro}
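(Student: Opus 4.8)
The plan is to exploit Corollary \ref{FB}: since $\mathcal{A}$ is Frobenius, every quasi-projective dimension over $\mathcal{A}$ equals $0$ or $\infty$, and this is exactly what renders the ``$+1$'' in Proposition \ref{ext} harmless. The technical core is a single \emph{syzygy step}: for any $X\in\mathcal{A}$ and any short exact sequence $0\to K\to P\to X\to 0$ with $P$ projective, one has $\qpd_{\mathcal{A}}(X)=\qpd_{\mathcal{A}}(K)$. Such a sequence exists because $\mathcal{A}$ has enough projectives, and $P$ is automatically projective-injective since $\mathcal{A}$ is Frobenius. Completing $P\twoheadrightarrow X$ to a deleted projective resolution exhibits $K$ as a first syzygy of $X$, so Theorem \ref{QPP}(2) gives $\qpd_{\mathcal{A}}(K)\leqslant\qpd_{\mathcal{A}}(X)$, while Proposition \ref{ext} applied to the sequence (with $E\coloneqq P$) gives $\qpd_{\mathcal{A}}(X)\leqslant\qpd_{\mathcal{A}}(K)+1$. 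By Corollary \ref{FB} both numbers lie in $\{0,\infty\}$, so if $\qpd_{\mathcal{A}}(K)=\infty$ the first inequality forces $\qpd_{\mathcal{A}}(X)=\infty$, and if $\qpd_{\mathcal{A}}(K)=0$ the second forces $\qpd_{\mathcal{A}}(X)=0$; either way the two agree.

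Next I would iterate this step along a deleted projective resolution of $M$ to obtain $\qpd_{\mathcal{A}}(M)=\qpd_{\mathcal{A}}(\Omega^{i}(M))$ for all $i\geqslant 0$. For $i<0$ I would use that a Frobenius abelian category also has enough injectives, which coincide with its projectives: taking an injective coresolution $0\to M\to I^0\to I^1\to\cdots$, each short exact sequence $0\to\Omega^{-j}(M)\to I^{j}\to\Omega^{-(j+1)}(M)\to 0$ has projective (= injective) middle term, hence exhibits $\Omega^{-j}(M)$ as a first syzygy of $\Omega^{-(j+1)}(M)$. The syzygy step then yields $\qpd_{\mathcal{A}}(\Omega^{-(j+1)}(M))=\qpd_{\mathcal{A}}(\Omega^{-j}(M))$, and induction on $j$ gives $\qpd_{\mathcal{A}}(M)=\qpd_{\mathcal{A}}(\Omega^{i}(M))$ for all $i\in\mathbb{Z}$. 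This settles the second equality and, incidentally, shows $\qpd_{\mathcal{A}}$ does not depend on the choice of syzygy.

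For the first equality, $\qpd_{\mathcal{A}}(M\oplus P)\leqslant\qpd_{\mathcal{A}}(M)$ is the ``in particular'' part of Theorem \ref{QPP}(1). Conversely, I would choose a deleted projective resolution of $M\oplus P$ that is the direct sum of a deleted projective resolution of $M$ with the trivial one of $P$ (namely $P$ concentrated in degree $0$); then $\Omega(M\oplus P)=\Omega(M)$, so the syzygy invariance just proved, applied once to $M\oplus P$ and once to $M$, gives $\qpd_{\mathcal{A}}(M\oplus P)=\qpd_{\mathcal{A}}(\Omega(M\oplus P))=\qpd_{\mathcal{A}}(\Omega(M))=\qpd_{\mathcal{A}}(M)$.

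I do not expect a genuine obstacle here once Corollary \ref{FB} is available; the only points requiring care are bookkeeping ones — verifying that Theorem \ref{QPP}(2) and Proposition \ref{ext} may be applied to an arbitrary, not necessarily minimal, projective presentation (handled by completing it to a resolution), and the observation that in a Frobenius category a cosyzygy of $M$ is simply a first syzygy of $\Omega^{-1}(M)$, so that no separate quasi-injective-dimension machinery is needed.
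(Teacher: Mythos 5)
Your proposal is correct and follows essentially the same route as the paper: both combine Theorem \ref{QPP}(1)(2) with Proposition \ref{ext} to trap each quasi-projective dimension between that of a syzygy and that syzygy's value plus one, and then invoke Corollary \ref{FB} to collapse everything to $0$ or $\infty$. Your treatment is slightly more explicit than the paper's about the case $i<0$ (via injective coresolutions with projective-injective terms), which the paper dispatches with a parenthetical remark, but this is a matter of detail rather than a different method.
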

\begin{proof}
By Theorem \ref{QPP}(1), $\qpd_\mathcal{A}(M\oplus P)\leqslant\qpd_\mathcal{A}(M)$. By Proposition \ref{ext}, $\qpd_\mathcal{A}(M)\leqslant\qpd_\mathcal{A}(\Omega(M))+1$. Since $\Omega(M)=\Omega(M\oplus P)$, we have $\qpd_\mathcal{A}(\Omega(M))=\qpd_\mathcal{A}(\Omega(M\oplus P))\leqslant\qpd_\mathcal{A}(M\oplus P)$ by Theorem \ref{QPP}$(2)$. Thus $\qpd_\mathcal{A}(M\oplus P)<\infty$ if and only if $\qpd_\mathcal{A}(M)<\infty$ if and only if $\qpd_\mathcal{A}(\Omega(M))<\infty$.  By Corollary \ref{FB}, the objects $M\oplus P$, $M$ and $\Omega(M)$ (and thus also $\Omega^i(M)$) have the same quasi-projective dimension.
\end{proof}

\textbf{Proof of Theorem \ref{main1.1}.} The statements $(1)$-$(3)$ are Proposition \ref{qp=p1}, Corollary \ref{VP} and Proposition \ref{ext}, respectively. It remains to show $(4)$.

For each $s\geqslant 0$, we denote by $P_{\bullet}^{\leqslant s}$ the brutal truncation at degree $s$, that is, $$P_{i}^{\leqslant s}=
\begin{cases}
P_i& \text{if $i\leqslant s$};\\
0 & \text{if $i>s$}.
\end{cases}$$ The monomorphism $\Omega^{n+m}(M)\longrightarrow P_{n+m-1}$ induces a distinguished triangle in $\mathscr{D}^{-}(\mathcal{A})$: $$\Omega^{n+m}(M)[n+m-1]\longrightarrow P_{\bullet}^{\leqslant n+m-1}\longrightarrow M\longrightarrow\Omega^{n+m}(M)[n+m],$$
where $M$ is identified with the complex $0\to\Omega^{n+m}(M)\to P_{n+m-1}\to P_{n+m-2}\to\cdots\to P_1\to P_0\to 0$, up to isomorphism. Similarly, there is a distinguished triangle in $\mathscr{D}^{-}(\mathcal{A})$:
$$
\Omega^{m}(M)[m-1]\longrightarrow P_{\bullet}^{\leqslant m-1}\longrightarrow M\longrightarrow\Omega^{m}(M)[m].
$$
Shifting this triangle by $n$ steps yields a new distinguished triangle $$\Omega^{m}(M)[n+m-1]\longrightarrow P_{\bullet}^{\leqslant m-1}[n]\longrightarrow M[n]\longrightarrow\Omega^{m}(M)[n+m].$$
Further, since the chain map $g_{\bullet}:P_{\bullet}\to P_{\bullet}[n]$ induces an isomorphism $\Omega^{n+m}(M)\to\Omega^m(M)$, we can construct the following commutative diagram  in $\mathscr{D}^{-}(\mathcal{A})$:
$$
\begin{tikzcd}
{\Omega^{n+m}(M)[n+m-1]} \arrow[d, "\simeq"'] \arrow[r] & P_{\bullet}^{\leqslant n+m-1} \arrow[d, "g_{\bullet}^{\leqslant n+m-1}"] \arrow[r]               & M \arrow[r] \arrow[d]                          & {\Omega^{n+m}(M)[n+m]} \arrow[d, "\simeq"'] \\
{\Omega^{m}(M)[n+m-1]} \arrow[r]                        & {P_{\bullet}^{\leqslant m-1}[n]} \arrow[r] \arrow[d]                                             & {M[n]} \arrow[r] \arrow[d]                     & {\Omega^{m}(M)[n+m]}             \\
                                                        & \Cone(g_{\bullet}^{\leqslant n+m-1}) \arrow[d] \arrow[r, no head, shift left] \arrow[r, no head] & \Cone(g_{\bullet}^{\leqslant n+m-1}) \arrow[d] &                                  \\
                                                        & {P_{\bullet}^{\leqslant n+m-1}[1]} \arrow[r]                                                     & {M[1],}                                         &
\end{tikzcd}
$$ where all columns and rows are distinguished triangles by the Octahedral axiom.
Clearly, $\Cone(g_{\bullet}^{\leqslant n+m-1})$ lies in $\mathscr{C}^{\rm b}(\mathcal{P})$. By $n\geqslant 2$, we see that $H_i(\Cone(g_{\bullet}^{\leqslant n+m-1}))\simeq M$ if $i=1$ or $n$, and $H_i(\Cone(g_{\bullet}^{\leqslant n+m-1}) )=0$ for other $i$. It follows that $\Cone(g_{\bullet}^{\leqslant n+m-1})$ is a finite quasi-projective resolution of $M$. Thus
$\qpd_{\mathcal{A}}(M)\leqslant{\rm sup}(\Cone(g_{\bullet}^{\leqslant n+m-1})) -{\rm hsup}(\Cone(g_{\bullet}^{\leqslant n+m-1}))=n+m-n=m.$ \hfill $\square$

\begin{Bsp}\label{newcase}
{\rm
let $A$ be an algebra over a field $K$ presented by the quiver
$$\begin{tikzcd}
1 \arrow[r, "\alpha"] & 2 \arrow[d, "\beta"]  \\
4 \arrow[u, "\delta"] & 3 \arrow[l, "\gamma"]
\end{tikzcd}$$
with relations: $\delta\gamma\beta\alpha=\beta\alpha\delta\gamma=0$. Denote by $J$ the Jacobson radical of $A$, and by $e_i$ the primitive, idempotent element corresponding to the vertex $i$ for $1\leqslant i\leqslant 4$. Then there exists an obvious algebra automorphism $\Phi: A\to A$ sending $e_1\mapsto e_3$, $e_2\mapsto e_4$, $e_3\mapsto e_1$ and $e_4\mapsto e_2$.

We will show that

{\rm $(1)$} $\qpd_{A}(Ae_{1}/Je_{1})=\qpd_{A}(Ae_{3}/Je_{3})=\qpd_{A}(Ae_{2}/J^{2}e_{2})=\qpd_{A}(Ae_{4}/J^{2}e_{4})=2$.

{\rm $(2)$} $\qpd_{A}(Ae_{2}/J^{3}e_{2})=\qpd_{A}(Ae_{4}/J^{3}e_{4})=1$.

\medskip

In fact, $A$ is a Nakayama algebra, and thus is of finite representation type. By {\rm \cite[Theorem V.3.5]{2006ASS}}, every indecomposable $A$-module is isomorphic to $P/J^tP$ for some indecomposable projective $A$-module $P$ and for some integer $t$ with $1\leqslant t\leqslant 5$. Moreover, the indecomposable projective modules of $A$ are determined by their composition sequences:
\begin{center}
$Ae_1=\begin{matrix}
1 \\ 2 \\ 3 \\ 4
\end{matrix},
\qquad
Ae_2=\begin{matrix}
2\\ 3 \\ 4 \\ 1\\ 2
\end{matrix},
\qquad
Ae_3=\begin{matrix}
3 \\ 4 \\ 1 \\ 2
\end{matrix},
\qquad
Ae_4=\begin{matrix}
4\\ 1 \\ 2 \\ 3\\ 4
\end{matrix}.
$
\end{center}Clearly, $Ae_2$ and $Ae_4$ are projective-injective, and indecomposable modules of projective dimension $1$ are only simple $A$-modules $Ae_2/Je_2$ and $Ae_4/Je_4$.
Thus, for any $X\in A\modcat$, $\pd_{A}(X) \leqslant 1$ if and only if
$X \in \add(A \oplus(Ae_2/Je_2)\oplus (Ae_4/Je_4))$.
This implies that if $\pd_A(X) \leqslant 1$, then the socle $\soc(X)$ of ${_A}X$ is a direct sum of finitely many copies of $Ae_2/Je_2$ and $Ae_4/Je_4$. Let
$$X\in\{A e_{1}/Je_{1},\; Ae_{3}/Je_{3},\; Ae_{2}/J^2e_{2},\; Ae_{4}/ J^2e_{4}\}.$$
Then $\soc(X)$ is a direct sum of finitely many copies of simple $A$-modules $Ae_1/Je_1$ and $Ae_3/Je_3$. It follows that $X$ can't be embedded into any $A$-module of projective dimension at most $1$.
By {\rm Corollary \ref{general 3.4}}(2), $\qpd_A(X)\geqslant 2$. To show $\qpd_A(X)\leqslant 2$, we
construct the following two chain maps of complexes of $A$-modules:
$$
\begin{tikzcd}
\cdots \arrow[r, "\cdot\delta\gamma"]  & Ae_3 \arrow[r, "\cdot\beta\alpha"] \arrow[d, no head, shift right] \arrow[d, no head] & Ae_1 \arrow[r, "\cdot\delta\gamma"] \arrow[d, no head, shift right] \arrow[d, no head] & Ae_3 \arrow[r, "\cdot\beta\alpha"] \arrow[d, "\cdot\beta"] & Ae_1 \arrow[r, "\cdot \delta\gamma\beta"] \arrow[d, no head, shift right] \arrow[d, no head] & Ae_2 \arrow[r, "\cdot \alpha"] \arrow[d] & Ae_1 \arrow[r] & 0 \\
\cdots \arrow[r, "\cdot\delta\gamma"'] & Ae_3 \arrow[r, "\cdot\beta\alpha"']                                                   & Ae_1 \arrow[r, "\cdot \delta\gamma\beta"']                                             & Ae_2 \arrow[r, "\cdot \alpha"']                            & Ae_1 \arrow[r]                                                                               & 0,                                        &                &
\end{tikzcd}
$$
where each row is a deleted projective resolution of $Ae_{1}/Je_{1}$;
$$
\begin{tikzcd}
\cdots \arrow[r, "\cdot\beta\alpha"]  & Ae_{1} \arrow[r, "\cdot\delta\gamma"] \arrow[d, no head, shift right] \arrow[d, no head] & Ae_{3} \arrow[r, "\cdot\beta\alpha"] \arrow[d, no head, shift right] \arrow[d, no head] & Ae_{1} \arrow[r, "\cdot\delta\gamma"] \arrow[d, "\cdot\delta"] & Ae_{3} \arrow[r, "\cdot\beta\alpha\delta"] \arrow[d, "\cdot\beta"] & Ae_{4} \arrow[r, "\cdot \gamma\beta"] \arrow[d] & Ae_{2} \arrow[r] & 0 \\
\cdots \arrow[r, "\cdot\beta\alpha"'] & Ae_{1} \arrow[r, "\cdot\delta\gamma"']                                                   & Ae_{3} \arrow[r, "\cdot\beta\alpha\delta"']                                             & Ae_{4} \arrow[r, "\cdot \gamma\beta"']                         & Ae_{2} \arrow[r]                                                   & 0,                                               &                  &
\end{tikzcd}
$$ where each row is a deleted projective resolution of $Ae_{2}/J^2e_{2}$.
By {\rm Theorem \ref{main1.1}(4)}, $\qpd_A\,(Ae_1/Je_1)\leqslant2$ and $\qpd_{A}(Ae_{2}/J^2e_{2})\leqslant 2$. Thus $\qpd_{A}\,(Ae_1/Je_1)=\qpd_{A}(Ae_{2}/J^2e_{2})=2$. By the isomorphism $\Phi$, we have $\qpd_A(Ae_{3}/Je_{3})=2=\qpd_A(Ae_{4}/J^2e_{4})$.

Finally, we consider the following two exact sequences:
$$0 \to Ae_1/J^2e_1 \to Ae_2 \to Ae_2/J^3e_2 \to 0,\quad 0 \to Ae_2/J^3e_2 \to Ae_4 \to Ae_4/J^2e_4 \to 0.$$ Since $Ae_1/J^2e_1$ is periodic, it follows from
{\rm Theorem \ref{QPP}(3)} that $\qpd_{A}(Ae_1/J^2e_1)=0$.
Since $Ae_2$ is projective-injective, $\qpd_{A}(Ae_2/J^3e_2) \leqslant 1$ by {\rm Theorem \ref{main1.1}(3)}. If $\qpd_{A}(Ae_2/J^3e_2)=0$, then {\rm Theorem \ref{main1.1}(3)} implies $\qpd_{A}(Ae_{4}/J^2e_{4})\leqslant 1$, which is contradictory to $\qpd_{A}(Ae_{4}/J^2e_{4})=2$. Thus $\qpd_A(Ae_2/J^3e_2)=1$. Similarly, $\qpd_A(Ae_{4}/J^3e_{4})=1$ by the isomorphism $\Phi$.

It is easy to see that all the $A$-modules listed in $(1)$ and $(2)$ have infinite projective dimension.
}
\end{Bsp}

\section{Quasi-global dimensions of rings}\label{4}
In this section, we introduce the notion of quasi-global dimension for left Noetherian rings by analogy with the classical global dimension. In Section \ref{4.1}, we investigate some basic properties of quasi-global dimension. For instance, quasi-global dimension is equal to finitistic dimension whenever the former is finite (Proposition \ref{basic}(1)), and both stable equivalence of Morita type and derived equivalence preserve quasi-global dimension of self-injective algebras (Corollary \ref{moritatype}). In Section \ref{4.2}, we calculate the quasi-global dimensions for a class of Nakayama algebras (Theorem \ref{main}). It turns out that these algebras have finite quasi-global dimension.

\subsection{Left Noetherian rings}\label{4.1}
Throughout this section, let $R$ be a \emph{left Noetherian} ring. Recall that the \emph{global} and \emph{finitistic dimensions} of $R$, denoted by $\gd(R)$ and $\fd(R)$, respectively, are defined as
$$\gd(R)\coloneqq\sup\{\pd_R\,M\,|\,M\in R\text{-}{\rm mod}\}\;\;\mbox{and}\;\;\fd(R)\coloneqq\sup\{\pd_{R}\,M\,|\,M\in R\text{-}{\rm mod}\,\,\mbox{with}\,\,\pd_{R}(M)<\infty\}.$$
Clearly, $\fd(R)\leqslant \gd(R)$. Similarly, we can introduce the quasi-global dimension of $R$ as follows.

\begin{Def}
The {\rm quasi-global dimension} of $R$, denoted by $\qgd(R)$, is defined as $$\qgd(R)\coloneqq\sup\{\qpd_R(M)\,|\,M\in R\text{-}{\rm mod}\}.$$
\end{Def}
Since $\qpd_R(M)\leqslant \pd_R(M)$ for any $R$-module $M$, we have $\qgd(R)\leqslant \gd(R)$.
The inequality may be strict (for example, see Theorem \ref{main}). However, if $\gd(R)<\infty$, then
$\qgd(R)=\gd(R)$ by Theorem \ref{main1.1}$(1)$.

We first collect some basic properties of quasi-global dimension.

\begin{Prop}\label{basic}
Let $R$ and $S$ be left Noetherian rings. Then:

$(1)$ $\fd(R)=\sup\{\qpd_{R}\,M\,|\,M\in R\text{-}{\rm mod}\,\,and\,\,\qpd_{R}\,M<\infty\}$.  In particular, if $\qgd(R)<\infty$, then $\fd(R)=\qgd(R)$.

$(2)$ If $R$ and $S$ are Morita equivalent, then $\qgd(R)=\qgd(S)$.

$(3)$ $\qgd(R\times S)={\rm max}\{\qgd(R),\,\qgd(S)\}$.

\noindent Furthermore, if $R$ and $S$ are finite-dimensional algebras over a field $K$, then

$(4)$  $\qgd(R\otimes_KS)\geqslant{\rm max}\{\qgd(R),\,\qgd(S)\}$.
\end{Prop}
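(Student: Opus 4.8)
The plan is to prove all four parts, with the genuinely new content being parts (1) and (4); parts (2) and (3) are routine once the framework is set up.

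For part (1), I would argue both inequalities. For ``$\leqslant$'': if $M\in R\modcat$ has $\pd_R(M)<\infty$, then by Theorem~\ref{main1.1}(1) we have $\qpd_R(M)=\pd_R(M)<\infty$, so $\pd_R(M)$ appears in the supremum on the right-hand side; hence $\fd(R)\leqslant\sup\{\qpd_R M\mid M\in R\modcat,\ \qpd_R M<\infty\}$. For ``$\geqslant$'': suppose $M\in R\modcat$ with $\qpd_R(M)=:r<\infty$. Since $R$ is left Noetherian it is in particular left coherent, so Corollary~\ref{general 3.4}(2) applies and produces a finitely presented $R$-module $N$ with $\pd_R(N)=\qpd_R(M)=r$. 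Then $r=\pd_R(N)$ is finite, so $r\leqslant\fd(R)$. Taking the supremum over all such $M$ gives the reverse inequality. The ``in particular'' clause is immediate: if $\qgd(R)<\infty$ then every $M$ has $\qpd_R(M)<\infty$, so the right-hand supremum is exactly $\qgd(R)$, and the displayed identity becomes $\fd(R)=\qgd(R)$.

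For part (2), a Morita equivalence $R\modcat\simeq S\modcat$ is an exact equivalence of abelian categories sending projectives to projectives; hence it carries finite quasi-projective resolutions to finite quasi-projective resolutions and preserves $\sup$ and $\mathrm{hsup}$ of such complexes, so it preserves $\qpd$ objectwise and therefore $\qgd(R)=\qgd(S)$. For part (3), use that $(R\times S)\modcat\simeq R\modcat\times S\modcat$ as abelian categories, a module $M$ over $R\times S$ decomposes as $M_R\oplus M_S$, projective resolutions (and quasi-projective resolutions) split accordingly, and $\qpd_{R\times S}(M)=\max\{\qpd_R(M_R),\qpd_S(M_S)\}$ by Theorem~\ref{QPP}(1) together with the fact that an $R$-module regarded as an $R\times S$-module keeps the same quasi-projective dimension; taking suprema gives the claimed maximum.

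For part (4), the idea is restriction/extension of scalars along the flat (indeed free) inclusions $R\hookrightarrow R\otimes_K S$ and $S\hookrightarrow R\otimes_K S$. Fix $M\in R\modcat$ with $\qpd_R(M)=r<\infty$ (if $\qgd(R)=\infty$ the inequality is trivial). By Corollary~\ref{general 3.4}(1), $M$ has a finite quasi-projective resolution $P_{\bullet}\in\mathscr{K}^{\mathrm b}(R\text{-}\mathrm{proj})$ with $\sup(P_{\bullet})-\mathrm{hsup}(P_{\bullet})=r$; applying $-\otimes_K S$ yields a bounded complex of projective $R\otimes_K S$-modules whose homology is $H_i(P_{\bullet})\otimes_K S$, because $S$ is free (hence flat) over $K$. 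Since each $H_i(P_{\bullet})\simeq M^{n_i}$, each homology term of the new complex is $(M\otimes_K S)^{n_i}$, so it is a finite quasi-projective resolution of the $R\otimes_K S$-module $M\otimes_K S$ with the same $\sup$ and $\mathrm{hsup}$; hence $\qpd_{R\otimes_K S}(M\otimes_K S)\leqslant r$. I expect this direction (the bound $\leqslant$) to be the easy half; the main obstacle is the reverse bound $\qpd_{R\otimes_K S}(M\otimes_K S)\geqslant\qpd_R(M)$, which I would obtain by the argument of Proposition~\ref{qp=p1}: choose $X\in R\modcat$ with $\Ext_R^{r}(N,X)\neq 0$ where $N=\Coker(d_1^P)$ realizes $\pd_R(N)=r$ (possible since $\pd_R(N)=r<\infty$), and use that $\Ext^i_{R\otimes_K S}(N\otimes_K S,\,X\otimes_K S)\cong\Ext^i_R(N,X)\otimes_K S$ (again by flatness of $S/K$ and finiteness), so $\Ext^r_{R\otimes_K S}(N\otimes_K S,X\otimes_K S)\neq 0$, forcing $\pd_{R\otimes_K S}(N\otimes_K S)\geqslant r$; combined with the $\qpd=\pd$ bookkeeping from Proposition~\ref{general 3} this gives $\qpd_{R\otimes_K S}(M\otimes_K S)\geqslant r$. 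Taking the supremum over $M\in R\modcat$ yields $\qgd(R\otimes_K S)\geqslant\qgd(R)$, and symmetrically $\geqslant\qgd(S)$, whence the maximum. The one subtlety to check carefully is that the Ext-flat-base-change isomorphism is valid for the modules in play (finitely presented over the Noetherian ring $R$, coefficients in a field), which ensures the relevant $\Ext$ groups are finite-dimensional and base change is well behaved.
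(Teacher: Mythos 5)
Your treatments of parts (1)--(3) are correct and essentially coincide with the paper's: (1) combines Theorem \ref{main1.1}(1) with Proposition \ref{general 3}(a) (via Corollary \ref{general 3.4}(2)); (2) transports quasi-projective resolutions through the Morita equivalence; and (3) reduces to the componentwise computation of $\qpd_{R\times S}$ using the splitting of resolutions over a product ring --- you cite Theorem \ref{QPP}(1) where the paper instead writes out the interleaved complex $\big(\bigoplus_j(P_{\bullet}[j])^{d_j}\big)\times\big(\bigoplus_i(Q_{\bullet}[i])^{c_i}\big)$ explicitly, but this is the same idea.

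Part (4) has a genuine gap, and it sits exactly at the step you identify as the main obstacle. You construct the particular resolution $P_{\bullet}\otimes_K S$ of $M\otimes_K S$, whose associated cokernel is $N\otimes_K S$, and you show $\pd_{R\otimes_K S}(N\otimes_K S)\geqslant r$ by Ext base change; that much is fine. But $\qpd_{R\otimes_K S}(M\otimes_K S)$ is an infimum over \emph{all} finite quasi-projective resolutions of $M\otimes_K S$, almost none of which are induced from $R$, and Proposition \ref{general 3}(a) only guarantees that \emph{some} resolution realizes the quasi-projective dimension through its cokernel --- not that yours does. So ``combined with the $\qpd=\pd$ bookkeeping'' does not yield $\qpd_{R\otimes_K S}(M\otimes_K S)\geqslant r$: a resolution of $M\otimes_K S$ unrelated to $P_{\bullet}\otimes_K S$ could a priori have smaller defect $\sup-\mathrm{hsup}$. (Attempting to repair this by running the argument of Proposition \ref{qp=p1} with the functor $\Hom(-,Y)$ over $R\otimes_KS$ also fails in general, since Proposition \ref{general 3}(b) requires $\sup\{i\mid\Ext^i(M\otimes_KS,Y)\neq0\}$ to be finite, which need not hold --- think of periodic modules.) The paper's argument goes in the opposite direction and is much shorter: restrict rather than induce. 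Since $S$ is finite dimensional over $K$, the ring $R\otimes_KS$ is a free $R$-module of finite rank, so an \emph{arbitrary} finite quasi-projective resolution of $M\otimes_KS$ over $R\otimes_KS$, viewed as a complex of $R$-modules, consists of finitely generated projectives and has homology $(M\otimes_KS)^{a_i}\simeq M^{a_i\dim_K S}$; it is therefore a finite quasi-projective resolution of ${}_RM$ with the same $\sup$ and $\mathrm{hsup}$. This gives $\qpd_R(M)\leqslant\qpd_{R\otimes_KS}(M\otimes_KS)\leqslant\qgd(R\otimes_KS)$ directly, and your ``easy half'' (the inequality $\qpd_{R\otimes_KS}(M\otimes_KS)\leqslant r$) is not needed for the statement at all.
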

\begin{proof}
$(1)$ follows from Theorem \ref{main1.1}$(1)$ and Proposition \ref{general 3}$(a)$.

$(2)$  Suppose that $R$ and $S$ are Morita equivalent. Then there exists an $S$-$R$-bimodule $P$ such that both ${_S}P$ and $P{_R}$ are finitely generated and projective and the tensor functor $P\otimes_R-: R\Modcat\to S\Modcat$ is an equivalence of abelian categories. Since $R$ and $S$ are left Noetherian, this equivalence can be restricted to an equivalence $F:R\modcat\to S\modcat$ of abelian categories. Let $M$ be a finitely generated $R$-module with a finite quasi-projective resolution $P_{\bullet}$. By Corollary \ref{general 3.4}(1), we may assume that $P_{\bullet}$ belongs to $\mathscr{K}^{{\rm b}}(R\text{-}{\rm proj})$. Since $F$ preserves projective modules and commutes with homology functors, it follows that $F(P_{\bullet})$ is a finite quasi-projective resolution of $F(M)$. Now, it is easy to show $\qgd(R)=\qgd(S)$.

$(3)$ Without loss of generality, assume $\qgd(R)\geqslant\qgd(S)$. We first show the equality $\qpd_R\,M=\qpd_{R\times S}\,(M\times S)$ for any $M\in R\modcat$. Let $P^M_{\bullet}$ be a quasi-projective resolution of $M$. By definition, $H_i(P^M_{\bullet})\simeq M^{a_i}$ for some $a_i\in\mathbb{N}$. Define
$S_{\bullet}\coloneqq\bigoplus_{i\in\mathbb{Z}}S^{a_i}[a_i]$.
Then $P^M_{\bullet}\times S_{\bullet}$ is a quasi-projective resolution of $M\times S$. This implies
$\qpd_{R\times S}\,(M\times S)\leqslant\qpd_R\,M$. The converse $\qpd_R\,M\leqslant\qpd_{R\times S}\,(M\times S)$ follows from the fact that any quasi-projective resolution of the $(R\times S)$-module $M\times S$ is of the form $Q^M_{\bullet}\times Q^S_{\bullet}$, where $Q^M_{\bullet}$ and $Q^S_{\bullet}$ are quasi-projective resolutions of $M$ and $S$, respectively. Thus $\qpd_R\,M=\qpd_{R\times S}\,(M\times S)$. It follows that $\qgd(R\times S)\geqslant{\rm max}\{\qgd(R),\,\qgd(S)\}$. In particular, if $\qgd(R)=\infty$, then $\qgd(R\times S)=\infty$.

It remains to show that $\qgd(R\times S)\leqslant{\rm max}\{\qgd(R),\,\qgd(S)\}$. Suppose $\qgd(R)<\infty$. Then $\qgd(S)<\infty$. It follows that finitely generated $R$-modules and finitely generated $S$-modules have finite quasi-projective resolutions. Let $M\times N$ be an arbitrary finitely generated $(R\times S)$-module. Then $M\in R\text{-}{\rm mod}$ and $N\in S\text{-}{\rm mod}$. We take $P_{\bullet}$ and $Q_{\bullet}$ to be any finite quasi-projective resolutions of $M$ and $N$, respectively.
Then $H_i(P_{\bullet})\simeq M^{c_i}$ and $H_i(Q_{\bullet})\simeq N^{d_i}$ for some $c_i$, $d_i\in\mathbb{N}$. Since $P_{\bullet}$ and $Q_{\bullet}$ are bounded complexes, both $c_i$ and $d_i$ are nonzero for finitely many $i$. Set $$F_{\bullet}\coloneqq\big(\bigoplus_{j\in\mathbb{Z}}(P_{\bullet}[j])^{d_j}\big)
\times\big(\bigoplus_{i\in\mathbb{Z}}(Q_{\bullet}[i])^{c_i}\big).$$ Then $F_{\bullet}$ is a finite quasi-projective resolution of $M\times N$. In fact, $H_{k}(F_{\bullet})=(M\times N)^{\sum_{i+j=k}c_id_j}$ for each $k\in\mathbb{Z}$. Note that ${\rm sup}\,F_{\bullet}={\rm max}\{{\rm sup}\,P_{\bullet}+{\rm hsup}\,Q_{\bullet},\,{\rm sup}\,Q_{\bullet}+{\rm hsup}\,P_{\bullet}\}$ and ${\rm hsup}\,F_{\bullet}={\rm hsup}\,P_{\bullet}+{\rm hsup}\,Q_{\bullet}.$ This implies $\qpd_{R\times S}\,(M\times N)\leqslant {\rm max}\{\qpd_R\,M,\qpd_S\,N\}$. Thus $\qgd(R\times S)\leqslant{\rm max}\{\qgd(R),\,\qgd(S)\}$.

$(4)$ The inequality holds trivially if $\qgd(R\otimes_KS)=\infty$. So, we assume  $\qgd(R\otimes_KS)<\infty$. Let $M\in R\modcat$. Then $M\otimes_KS\in(R\otimes_KS)\modcat$. Let $P_{\bullet}\in\Cb{\prj{(R\otimes_KS)}}$ be any finite quasi-projective resolution of $M\otimes_KS$ with $H_i(P_{\bullet})\simeq (M\otimes_KS)^{a_i}$ for each $i$, where $a_i\in\mathbb{N}$ are not all zero. Since $S$ is a finite-dimensional $K$-algebra, each $P_i$ as an $R$-module is finitely generated and projective, and $H_{i}(P_{\bullet})\simeq M^{a_i {\rm dim}_K(S)}$ as $R$-modules. It follows that $P_{\bullet}\in\Cb{\prj{R}}$ is a finite quasi-projective resolution of ${_R}M$. Thus   $\qpd_{R}(M)\leqslant\qgd_{R}(M\otimes_KS)$, which forces $\qgd(R)\leqslant\qgd(R\otimes_KS)$. Similarly, $\qgd(S)\leqslant\qgd(R\otimes_KS)$.
\end{proof}

\begin{Koro}\label{QDZO}
Let $R$ be a quasi-Frobenius ring. Then $\qgd(R)=0$ or $\infty$. If $\qgd(R)=0$, then every $R$-module $M$ with $\Ext_R^i(M,M)=0$ for $i\geqslant 2$ is projective.
\end{Koro}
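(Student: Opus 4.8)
The whole statement will be obtained by combining Corollaries \ref{FB} and \ref{tach}. Since $R$ is quasi-Frobenius, $R\Modcat$ is a Frobenius abelian category (projective and injective $R$-modules coincide), so Corollary \ref{FB} applies and yields $\qpd_R(M)\in\{0,\infty\}$ for \emph{every} $R$-module $M$; in particular this holds for all finitely generated $M$. Hence the set $\{\qpd_R(M)\mid M\in R\modcat\}$ lies in $\{0,\infty\}$, and its supremum $\qgd(R)$ is $0$ when every finitely generated $R$-module admits a finite quasi-projective resolution, and $\infty$ otherwise. This proves the dichotomy.

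Now assume $\qgd(R)=0$ and let $M$ be an $R$-module with $\Ext_R^i(M,M)=0$ for all $i\geqslant 2$. The plan is to reduce to Corollary \ref{tach}, which says precisely that an $R$-module of \emph{finite} quasi-projective dimension satisfying this $\Ext$-vanishing is projective; hence it suffices to check $\qpd_R(M)<\infty$. By the previous paragraph we already know $\qpd_R(M)\in\{0,\infty\}$, so the only thing to exclude is $\qpd_R(M)=\infty$. When $M$ is finitely generated this is immediate: $\qpd_R(M)\leqslant\qgd(R)=0$ directly from the definition of $\qgd(R)$, so $\qpd_R(M)=0<\infty$ and Corollary \ref{tach} gives that $M$ is projective.

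For an arbitrary module $M$ the remaining task is to transfer the hypothesis $\qgd(R)=0$, which a priori only controls finitely generated modules, to the statement that \emph{every} $R$-module admits a finite quasi-projective resolution; after that, Corollary \ref{tach} finishes the argument exactly as above. I expect this transfer to be the main obstacle. The natural approach is to mimic the classical fact that global dimension is detected on finitely generated modules: one would try to assemble a finite quasi-projective resolution of a large $M$ from the finite quasi-projective resolutions available for its finitely generated subquotients, or to detect the existence of such a resolution from that finitely generated data. Should this step prove delicate, the conclusion is in any case fully established by the previous paragraph for all finitely generated $M$, which is the case relevant to Tachikawa's second conjecture.
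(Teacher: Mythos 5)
Your argument is correct and, for the substantive half of the statement, takes essentially the paper's route; the two proofs of the dichotomy differ only in which intermediary they use. You apply Corollary \ref{FB} to the Frobenius abelian category $R\Modcat$ to get $\qpd_R(M)\in\{0,\infty\}$ module by module and then take the supremum over finitely generated modules, whereas the paper observes that $\fd(R)=0$ for a quasi-Frobenius ring and invokes Proposition \ref{basic}(1), which identifies $\qgd(R)$ with $\fd(R)$ whenever the former is finite. Both routes ultimately rest on Proposition \ref{general 3}(a) (a module of finite quasi-projective dimension has a companion $N$ with $\pd_R N=\qpd_R M$, and over a quasi-Frobenius ring such an $N$ is projective), so the difference is cosmetic; your version is if anything the more direct one, since it bypasses the finitistic-dimension formalism. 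The reduction of the second assertion to Corollary \ref{tach} by verifying $\qpd_R(M)<\infty$ is exactly what the paper does.

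Concerning the step you flag as the main obstacle --- passing from $\qgd(R)=0$, which by definition only constrains finitely generated modules, to $\qpd_R(M)=0$ for an arbitrary $R$-module $M$: you are right that this does not follow formally from the definition, but the paper does not supply the missing argument either. Its proof simply asserts that $\qgd(R)=0$ implies every $R$-module has quasi-projective dimension $0$, with no justification for non-finitely-generated modules. So your proposal establishes exactly as much as the paper's own proof does, namely the finitely generated case, which is the one relevant to Tachikawa's second conjecture as formulated in Remark \ref{TSC}; your explicit acknowledgement of the issue is a point of care rather than a defect relative to the paper.
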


\begin{proof}
Since $R$ is quasi-Frobenius, each $R$-module of finite projective dimension is projective. This implies $\fd(R)=0$. By Proposition \ref{basic}(1), $\qgd(R)=0$ or $\infty$. If $\qgd(R)=0$, then each $R$-module $M$ has quasi-projective dimension $0$. If, in addition, $\Ext_R^i(M,M)=0$ for all $i\geqslant 2$, then $M$ is projective by Corollary \ref{tach}.
\end{proof}

\begin{Rem}\label{TSC}
Let $R$ be a self-injective Artin algebra. Recall that \emph{Tachikawa’s second conjecture} asserts that a finitely generated $R$-module $M$ is projective if $\Ext_R^i(M,M)=0$ for $i\geqslant 1$. By Corollary \ref{QDZO}, Tachikawa’s second conjecture holds for a self-injective Artin algebra with finite quasi-global dimension (or equivalently, with quasi-global dimension $0$). Note that if $R$ is of \emph{finite representation type}, that is, there are only finitely many isomorphism classes of indecomposable, finitely generated $R$-modules, then $\qgd(R)=0$. In this case, each $M\in R\modcat$ is periodic and $\qpd_R(M)=0$ by Theorem \ref{QPP}(3).

It would be interesting to classify all self-injective Artin algebras with quasi-global dimension $0$.
\end{Rem}

\begin{Prop}\label{STE}
Let $A$ and $B$ be quasi-Frobenius rings. Suppose that $F:A\modcat\to B\modcat$ is an exact functor and induces an equivalence $A\text{-}\underline{{\rm mod}}\to B\text{-}\underline{{\rm mod}}$ of stable module categories. Then $\qgd(B)\leqslant\qgd(A)$.
\end{Prop}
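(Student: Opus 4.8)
The plan is to exploit two facts proved earlier in the paper: over a quasi-Frobenius ring the quasi-global dimension is either $0$ or $\infty$ (Corollary \ref{QDZO}), and quasi-projective dimension over a Frobenius abelian category is insensitive to projective direct summands (Corollary \ref{summand}). By Corollary \ref{QDZO}, $\qgd(A),\qgd(B)\in\{0,\infty\}$, so the inequality is automatic when $\qgd(A)=\infty$, and it remains only to treat $\qgd(A)=0$ and show $\qgd(B)<\infty$, i.e. $\qpd_B(N)<\infty$ for every $N\in B\modcat$. Fixing such an $N$, I would first dispose of the case where $N$ is projective (then $\qpd_B(N)=0$) and assume henceforth that $N$ is non-projective.

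The next step is to record the relevant behaviour of $F$. Since $F$ induces a functor on stable module categories and, over a quasi-Frobenius ring, an object is isomorphic to $0$ in the stable category exactly when it is projective, $F$ must send projective $A$-modules to projective $B$-modules; being exact, $F$ then carries any bounded complex in $\mathscr{C}^{\rm b}(\prj{A})$ to one in $\mathscr{C}^{\rm b}(\prj{B})$ and commutes with the formation of homology, so $H_i(F(X_\bullet))\cong F(H_i(X_\bullet))$ for all $i$. Using the equivalence $\stmc{A}\to\stmc{B}$, I would pick $M\in A\modcat$ with $F(M)$ isomorphic to $N$ in $\stmc{B}$; then $M$ and $F(M)$ are non-projective, and by Krull--Schmidt in $B\modcat$ (valid since $B$ is Artinian) there are finitely generated projective $B$-modules $Q_1,Q_2$ with $F(M)\oplus Q_1\cong N\oplus Q_2$.

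Now I would produce a finite quasi-projective resolution of $F(M)$. Since $\qgd(A)=0$ gives $\qpd_A(M)=0$, Corollary \ref{general 3.4}(1) supplies a finite quasi-projective resolution $P_\bullet\in\mathscr{C}^{\rm b}(\prj{A})$ of $M$ with $\sup(P_\bullet)=\mathrm{hsup}(P_\bullet)$ and $H_i(P_\bullet)\cong M^{n_i}$ for integers $n_i\geqslant 0$ not all zero. Applying $F$, the complex $F(P_\bullet)\in\mathscr{C}^{\rm b}(\prj{B})$ has $H_i(F(P_\bullet))\cong F(M)^{n_i}$, and since $F(M)\neq 0$ this is a finite quasi-projective resolution of $F(M)$ in the sense of Definition \ref{def3.1(1)}; hence $\qpd_B(F(M))<\infty$, and so $\qpd_B(F(M))=0$ by Corollary \ref{FB} applied to the Frobenius abelian category $B\Modcat$. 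Finally, Corollary \ref{summand} in $B\Modcat$ yields $\qpd_B(N)=\qpd_B(N\oplus Q_2)=\qpd_B(F(M)\oplus Q_1)=\qpd_B(F(M))=0$, so $\qgd(B)=0\leqslant\qgd(A)$.

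The substantive points — rather than genuine obstacles — are the two structural observations about $F$ (that an exact functor inducing a stable equivalence preserves projectives and therefore commutes with homology of bounded projective complexes), and the fact that $F(P_\bullet)$ resolves $F(M)$ and not $N$ on the nose; the whole reason one needs to invoke the invariance of quasi-projective dimension under adding projective summands (Corollary \ref{summand}) is to bridge that last gap between $F(M)$ and $N$.
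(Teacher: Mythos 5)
Your proof is correct and follows essentially the same route as the paper's: both rest on the observations that $F$ preserves projectives and commutes with homology (so it transports finite quasi-projective resolutions), and both bridge the gap between $F(M)$ and $N$ via $F(M)\oplus Q_1\cong N\oplus Q_2$ together with Corollary \ref{summand}. The only cosmetic difference is that you first reduce to the dichotomy $\qgd\in\{0,\infty\}$ via Corollary \ref{QDZO}, whereas the paper proves the inequality $\qpd_B(Y)\leqslant\qpd_A(X)$ directly.
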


\begin{proof}
Since $F$ is an exact functor and induces an equivalence $A\text{-}\underline{{\rm mod}}\to B\text{-}\underline{{\rm mod}}$, we see that $F$ commutes with homology functors, and preserves and detects finitely generated projective modules. It follows that if $P_{\bullet}\coloneqq(P_i)_{i\in\mathbb{Z}}$ is a quasi-projective resolution of an $A$-module $M$, then $F(P_\bullet)\coloneqq\big(F(P_i)_{i\in\mathbb{Z}}\big)$ is a quasi-projective resolution of the $B$-module $F(M)$. This implies $\qpd_B(F(M))\leqslant \qpd_A(M)$. Let $Y\in B\modcat$. As $F:A\text{-}\underline{{\rm mod}}\to B\text{-}\underline{{\rm mod}}$ is dense, there exists $X\in A\modcat$ and $P_1, P_2\in\prj{B}$ such that $Y\oplus P_1\simeq F(X)\oplus P_2$. Since $A$ and $B$ are quasi-Frobenius, $\qpd_B(Y)=\qpd_B(F(X))$ by Corollary \ref{summand}. Thus $\qpd_B(Y)\leqslant \qpd_A(X)$. This forces $\qgd(B)\leqslant\qgd(A)$.
\end{proof}

Next, we apply Proposition \ref{STE} to special stable equivalences between self-injective algebras. For the definitions and constructions of stable equivalences of Morita type and derived equivalences, we refer to the survey article \cite{xi18}.

\begin{Koro}\label{moritatype}
Let $A$ and $B$ be finite-dimensional self-injective algebras over a field $K$. If $A$ and $B$ are stably equivalent of Morita type or derived equivalent, then they have the same quasi-global dimension.
\end{Koro}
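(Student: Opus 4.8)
The plan is to reduce Corollary \ref{moritatype} to Proposition \ref{STE} by checking that, in both scenarios, one obtains an exact functor between the finitely generated module categories that induces an equivalence of stable module categories. Once this is done, Proposition \ref{STE} gives the inequality $\qgd(B)\leqslant\qgd(A)$, and applying the same argument to the inverse equivalence (or to a quasi-inverse on the stable level, suitably realized) yields $\qgd(A)\leqslant\qgd(B)$, hence equality. Since $A$ and $B$ are finite-dimensional self-injective algebras, they are in particular quasi-Frobenius, so the hypotheses of Proposition \ref{STE} are available to us.

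First I would treat the case of a stable equivalence of Morita type. Recall that such an equivalence is given by bimodules ${}_BN_A$ and ${}_AM_B$, projective as one-sided modules on each side, with $N\otimes_A M\simeq B\oplus(\text{proj bimodule})$ and $M\otimes_B N\simeq A\oplus(\text{proj bimodule})$. The functor $F\coloneqq N\otimes_A-:A\modcat\to B\modcat$ is exact (since ${}_AN$, i.e. $N_A$, is projective as a right $A$-module the tensor functor is exact) and sends finitely generated modules to finitely generated modules. Moreover $F$ sends projectives to projectives, and the defining bimodule isomorphisms show that the induced functor $\underline{F}:A\text{-}\underline{{\rm mod}}\to B\text{-}\underline{{\rm mod}}$ is an equivalence with quasi-inverse induced by $M\otimes_B-$. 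Thus $F$ satisfies all the hypotheses of Proposition \ref{STE}, giving $\qgd(B)\leqslant\qgd(A)$; running the symmetric argument with $M\otimes_B-$ gives $\qgd(A)\leqslant\qgd(B)$.

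Next I would treat the derived equivalence case, reducing it to the previous one. By a theorem of Rickard (as recalled in the survey \cite{xi18}), a derived equivalence between finite-dimensional self-injective algebras $A$ and $B$ induces a stable equivalence between them, and in fact, by results of Rickard and of Dugas--Van den Bergh type (again see \cite{xi18}), this stable equivalence can be taken to be of Morita type. Hence the derived equivalence case follows immediately from the Morita-type case already handled. Alternatively, one could argue directly: a derived equivalence restricts to a triangle equivalence $\Db{A\modcat}\to\Db{B\modcat}$, which induces an equivalence of the singularity categories, and for self-injective algebras the singularity category coincides with the stable module category; one then checks that the resulting stable equivalence lifts to an exact functor on module categories (by composing with syzygy shifts so that the image of $A$ is a module, then using the tilting complex to produce an honest bimodule), but invoking the Morita-type reduction is cleaner.

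The main obstacle is the derived-equivalence step: one must be careful that the induced stable equivalence is realized by a genuine exact functor $A\modcat\to B\modcat$ (not merely an equivalence of the stable categories as abstract triangulated categories), since Proposition \ref{STE} requires exactness of $F$ on module categories in order to guarantee that $F$ commutes with homology and hence transports quasi-projective resolutions. For stable equivalences of Morita type this is automatic from the bimodule description; for derived equivalences it is exactly the content of the refinement that the induced stable equivalence is of Morita type, which is where I would lean on the literature rather than reprove it.
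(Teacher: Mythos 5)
Your proposal follows essentially the same route as the paper: reduce the Morita-type case to Proposition \ref{STE} via the exact tensor functors given by the defining bimodules (applied in both directions to get equality), and reduce the derived-equivalence case to the Morita-type case via Rickard's theorem that derived equivalent self-injective algebras are stably equivalent of Morita type (the paper cites \cite[Corollary 5.5]{1991Rickard} for this). The argument is correct; your concluding caution about needing a genuine exact functor on module categories, rather than a bare triangulated equivalence of stable categories, is exactly the point that makes the Morita-type reduction the right tool.
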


\begin{proof}
Clearly, finite-dimensional self-injective algebras are quasi-Frobenius rings. If $A$ and $B$ are stably equivalent of Morita type, then there exists an $A$-$B$-module $X$ and an $B$-$A$-module $Y$ such that the tensor functors $Y\otimes_A-: A\modcat\to B\modcat$ and $X\otimes_B-:B\modcat\to A\modcat$ are exact and induce equivalences $A\text{-}\underline{{\rm mod}}\simeq B\text{-}\underline{{\rm mod}}$ of stable module categories. In this case, $\qgd(B)=\qgd(A)$ by Proposition \ref{STE}. If $A$ and $B$ are derived equivalent, then they are stably equivalent of Morita type by \cite[Corollary 5.5]{1991Rickard} and thus $\qgd(B)=\qgd(A)$.
\end{proof}

Finally, we provide an example of a self-injective algebra with \emph{infinite} quasi-global dimension.

Let $K$ be a field with a non-zero element $q\in K$ that is not a root of unity. The local symmetric $K$-algebra $A$, defined by Liu–Schulz (see \cite{1994liu}), is generated by $x_0$, $x_1$, $x_2$ with the relations: $x_i^2=0$, and $x_{i+1}x_{i}+qx_{i}x_{i+1}=0$ for $i=0,1,2$.

\begin{Prop}
Let $A$ be Liu–Schulz algebra. Then $\qgd(A)=\infty$.
\end{Prop}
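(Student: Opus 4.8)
The plan is to combine the fact that the Liu--Schulz algebra $A$ is quasi-Frobenius with Corollary~\ref{QDZO} and Corollary~\ref{VP}. Being a finite-dimensional local symmetric $K$-algebra, $A$ is quasi-Frobenius and is not semisimple (its radical is nonzero). By Corollary~\ref{QDZO}, $\qgd(A)\in\{0,\infty\}$, so it suffices to exhibit a single finitely generated $A$-module $M$ with $\qpd_A(M)\neq 0$, equivalently (by Corollary~\ref{FB}) with $\qpd_A(M)=\infty$. The simple module $A/\rad A$ will not serve, since $\Ext_A^{*}(A/\rad A,A/\rad A)$ is nonzero in arbitrarily high degrees; one needs a more delicate module.

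By Corollary~\ref{VP} it is enough to produce a \emph{non-projective} finitely generated $A$-module $M$ with $\Ext_A^n(M,M)=0$ for all $n\geqslant 2$. Indeed, for such an $M$ a finite quasi-projective dimension would force $\pd_A(M)<\infty$ by Corollary~\ref{VP}, and since $A$ is self-injective but not semisimple this would make $M$ projective, a contradiction; hence $\qpd_A(M)=\infty$ and $\qgd(A)=\infty$. Note that one does \emph{not} ask here for $\Ext_A^n(M,M)=0$ for all $n\geqslant 1$ (that, together with $A$ self-injective, would contradict the Auslander--Reiten conjecture); the self-extension $\Ext_A^1(M,M)$ is allowed to be, and for the module chosen below will be, nonzero. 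Such a module can exist only because $A$ has infinite representation type: over a self-injective algebra of finite representation type every non-projective module is $\Omega$-periodic and hence has nonzero self-extensions in infinitely many degrees.

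The module $M$ I would use comes from the explicit family of graded modules over the Liu--Schulz algebra described in \cite{1994liu}. Recall that $A$ is $\mathbb{Z}$-graded with $\dim_KA=8$ and that there is a one-parameter family $\{M_\lambda\}$ of non-projective finitely generated graded modules whose isomorphism classes are permuted by the syzygy operator along an orbit of the Möbius transformation $\lambda\mapsto q\lambda$. Since $q$ is not a root of unity this orbit is infinite, so for a suitable $\lambda$ the module $M:=M_\lambda$ is non-periodic and non-projective, with pairwise non-isomorphic syzygies $\Omega^nM\cong M_{q^n\lambda}$. Using the standard identification $\Ext_A^n(M,M)\cong\StHom_A(\Omega^nM,M)$ for $n\geqslant 1$ (valid since $A$ is self-injective), what remains is to show $\StHom_A(M_{q^n\lambda},M_\lambda)=0$ for all $n\geqslant 2$.

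This last vanishing is the crux and the main obstacle; it is where the specific structure of the Liu--Schulz example enters. The relevant point is that the modules $\Omega^nM$ form an infinite family of pairwise non-isomorphic modules of bounded $K$-dimension, while $M$ is fixed, and the description of homomorphisms between members of the family in \cite{1994liu} pins down exactly for which pairs $(\mu,\lambda)$ one has $\StHom_A(M_\mu,M_\lambda)\neq 0$; the essential fact to extract is that this stable $\Hom$ vanishes once $M_\mu$ and $M_\lambda$ lie at least two steps apart in the syzygy orbit. Applying this with $\mu=q^n\lambda$ and $n\geqslant 2$ gives $\StHom_A(\Omega^nM,M)=0$, hence $\Ext_A^n(M,M)=0$, for all $n\geqslant 2$. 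Combined with the reduction of the first two paragraphs, this yields $\qpd_A(M)=\infty$ and therefore $\qgd(A)=\infty$.
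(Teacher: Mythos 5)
Your overall reduction is exactly the paper's: since $A$ is symmetric, any non-projective module has infinite projective dimension, so by Corollary~\ref{VP} (= Theorem~\ref{main1.1}(2)) it suffices to exhibit a non-projective finitely generated $A$-module $M$ with $\Ext_A^n(M,M)=0$ for all $n\geqslant 2$; such an $M$ then has $\qpd_A(M)=\infty$. (The detour through Corollary~\ref{QDZO} is harmless but unnecessary --- once one such $M$ is found, $\qgd(A)=\infty$ follows directly.) The difference is in how the existence of $M$ is secured. The paper simply cites \cite[Lemma 6.6]{2012Chen}, which produces a concrete non-projective module $I_0$ over the Liu--Schulz algebra with $\Ext_A^i(I_0,I_0)=0$ for $i\geqslant 2$, and is done. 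You instead try to rederive this from the one-parameter family $M_\lambda$ of \cite{1994liu}, using $\Omega M_\lambda\simeq M_{q\lambda}$ and $\Ext_A^n(M,M)\simeq\StHom_A(\Omega^nM,M)$, and you reduce everything to the claim that $\StHom_A(M_{q^n\lambda},M_\lambda)=0$ for $n\geqslant 2$.

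That last claim is precisely where your argument stops being a proof: you state it as ``the essential fact to extract'' from \cite{1994liu} but do not verify it, and it does not follow from the soft considerations you give (pairwise non-isomorphic syzygies of bounded dimension do not by themselves force the stable Hom spaces to vanish). The statement you need is true and is essentially the content of \cite[Lemma 6.6]{2012Chen}, but establishing it requires an explicit computation of $\Hom_A(M_\mu,M_\lambda)$ and of which maps factor through projectives, using the defining relations of $A$ and the hypothesis that $q$ is not a root of unity. So the proposal has the right architecture and identifies the correct crux, but as written it leaves the decisive computation undone; either carry out that Hom calculation or cite the lemma, as the paper does.
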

\begin{proof}

Since $A$ is a finite-dimensional symmetric $K$-algebra, every non-projective module has infinite projective dimension. By \cite[Lemma 6.6]{2012Chen}, there exists a finitely generated non-projective $A$-module $I_0$ such that $\Ext_A^i(I_0,I_0)=0$ for all $i\geqslant 2$. It follows from Theorem $\ref{main1.1}(2)$ that $\qpd_A(I_0)=\infty$. Thus $\qgd(A)=\infty$.
\end{proof}

\subsection{Nakayama algebras}\label{4.2}
In this subsection, we fix an arbitrary field $K$, an integer $n\geqslant 2$ and the following quiver \(Q\):

\[\begin{tikzpicture}[scale=0.75]
\foreach \ang\lab\anch in {90/1/north, 45/2/{north east}, 0/3/east, 270/i/south, 180/{n-1}/west, 135/n/{north west}}{
  \draw[fill=black] ($(0,0)+(\ang:3)$) circle (.08);
  \node[anchor=\anch] at ($(0,0)+(\ang:2.8)$) {$\lab$};
}

\foreach \ang\lab in {90/1,45/2,180/{n-1},135/n}{
  \draw[->,shorten <=7pt, shorten >=7pt] ($(0,0)+(\ang:3)$) arc (\ang:\ang-45:3);
  \node at ($(0,0)+(\ang-22.5:3.5)$) {$\boldsymbol\alpha_{\lab}$};
}

\draw[->,shorten <=7pt] ($(0,0)+(0:3)$) arc (360:325:3);
\draw[->,shorten >=7pt] ($(0,0)+(305:3)$) arc (305:270:3);
\draw[->,shorten <=7pt] ($(0,0)+(270:3)$) arc (270:235:3);
\draw[->,shorten >=7pt] ($(0,0)+(215:3)$) arc (215:180:3);
\node at ($(0,0)+(0-20:3.5)$) {$\boldsymbol\alpha_3$};
\node at ($(0,0)+(315-25:3.5)$) {$\boldsymbol\alpha_{i-1}$};
\node at ($(0,0)+(270-20:3.5)$) {$\boldsymbol\alpha_i$};
\node at ($(0,0)+(225-25:3.5)$) {$\boldsymbol\alpha_{n-2}$};

\foreach \ang in {310,315,320,220,225,230}{
 \draw[fill=black] ($(0,0)+(\ang:3)$) circle (.02);
}
\end{tikzpicture}.\]
Denote by
\[\rho_i=
\begin{cases}
\alpha_{n}\cdots\alpha_1& \text{for $i=1,$}\\
\alpha_{i-1}\cdots\alpha_1\alpha_n\cdots\alpha_{i}& \text{for $2\leqslant i\leqslant n,$}
\end{cases}\]
the composition of certain arrows in \(Q\). Let $m$ and $\lambda_j$ be integers with $1 \leqslant m \leqslant n$, $1\leqslant j\leqslant m$ and
$1 \leqslant \lambda_1 < \lambda_2 < \dots < \lambda_m \leqslant n$. We define
\[
A_{n,m} \coloneqq KQ / \langle \rho_{\lambda_1}, \rho_{\lambda_2}, \dots, \rho_{\lambda_m} \rangle,
\] the quotient algebra of the path algebra $KQ$ modulo the ideal generated by $\rho_{\lambda_i}$ for all $1\leqslant i\leqslant m$. Clearly, $A_{n,m}$ is a Nakayama algebra.
The main result of this subsection is to compare its quasi-global dimension with its global dimension.

\begin{Theo}\label{main}
$(1)$ $\gd(A_{n,1})=\qgd(A_{n,1})=2$.

$(2)$ $\gd(A_{n,n})=\infty$ and $\qgd(A_{n,n})=0$.

$(3)$ If $n>2$ and $1<m<n$, then $\gd(A_{n,m})=\infty$ and $\qgd(A_{n,m})=2$.
\end{Theo}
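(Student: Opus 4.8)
The plan is to compute the three Nakayama algebras $A_{n,1}$, $A_{n,n}$ and $A_{n,m}$ (with $1<m<n$) separately, using the description of indecomposable $A_{n,m}$-modules as $P/J^tP$ for indecomposable projectives $P$ together with the tools from Section \ref{3}, especially Theorem \ref{main1.1}(1)--(4) and Theorem \ref{QPP}(3).

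For part $(1)$ the algebra $A_{n,1}=KQ/\langle\rho_1\rangle$ is a Nakayama algebra with exactly one relation of length $n$, so I would first write down the composition series of its indecomposable projectives $Ae_i$ and check directly that $\gd(A_{n,1})=2$ (a one-relation Nakayama algebra on a cyclic quiver with relation going all the way around is well known to have global dimension $2$; alternatively compute $\pd$ of each simple). Since the global dimension is finite, Theorem \ref{main1.1}(1) (equivalently the remark right after the definition of $\qgd$, or Proposition \ref{basic}) immediately gives $\qgd(A_{n,1})=\gd(A_{n,1})=2$.

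For part $(2)$, $A_{n,n}=KQ/\langle\rho_1,\dots,\rho_n\rangle$ is the Nakayama algebra where \emph{every} $\rho_i$ is zero, i.e. $J^n=0$ while $J^{n-1}\neq0$; this is a self-injective Nakayama algebra. Then $\gd(A_{n,n})=\infty$ because it is self-injective but not semisimple (any non-projective simple has infinite projective dimension). For the quasi-global dimension, I would invoke Remark \ref{TSC}: a self-injective algebra of finite representation type has $\qgd=0$, since every finitely generated module is periodic and hence has quasi-projective dimension $0$ by Theorem \ref{QPP}(3). As $A_{n,n}$ is a Nakayama algebra it is of finite representation type, so $\qgd(A_{n,n})=0$.

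Part $(3)$ is the substantive case and will be the main obstacle. Here $A_{n,m}$ with $1<m<n$ has some $\rho_{\lambda_j}=0$ and some $\rho_i\neq0$, so it is neither self-injective nor of finite global dimension; indeed $\gd(A_{n,m})=\infty$ should follow by exhibiting a simple module with a periodic syzygy (the relations not among the $\rho_{\lambda_j}$ force a syzygy to loop back). To prove $\qgd(A_{n,m})=2$ I would argue in two halves. For the upper bound $\qgd(A_{n,m})\leqslant2$: every indecomposable module is some $M=P/J^tP$, and I would split into cases according to whether $\pd M<\infty$ (then $\qpd M=\pd M\leqslant 2$ by Theorem \ref{main1.1}(1), after checking the finite projective dimensions are at most $2$), whether $M$ is periodic (then $\qpd M=0$ by Theorem \ref{QPP}(3)), and the remaining ``eventually periodic'' modules, for which I would produce an explicit self-chain-map $g_\bullet:P_\bullet\to P_\bullet[n']$ of the deleted projective resolution inducing an isomorphism $\Omega^{n'+m'}(M)\xrightarrow{\sim}\Omega^{m'}(M)$ with $m'\leqslant 2$, and apply Theorem \ref{main1.1}(4) — exactly as in Example \ref{newcase}; alternatively use the projective-injective modules of $A_{n,m}$ and Theorem \ref{main1.1}(3) to reduce to the periodic case. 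For the lower bound $\qgd(A_{n,m})\geqslant2$: I would find an indecomposable module $X$ (a suitable simple or $P/J^2P$) whose socle consists only of simples $S$ with $\pd S=\infty$, so that $X$ cannot embed into any module of projective dimension $\leqslant1$; then Corollary \ref{general 3.4}(2) forces $\qpd_A(X)\geqslant2$. The delicate point throughout part $(3)$ is the bookkeeping: identifying precisely which syzygies stabilize and after how many steps, which requires tracking the lengths of the indecomposable projectives $Ae_i$ (determined by the positions $\lambda_1<\dots<\lambda_m$ of the zero relations) and the iterated syzygies of each $P/J^tP$ around the cycle. I expect this combinatorial syzygy analysis — and in particular locating the syzygy periodicity uniformly in $n$ and the $\lambda_j$ — to be the hard part; once the periodic pattern is pinned down, the homological conclusions are immediate from Theorem \ref{main1.1}.
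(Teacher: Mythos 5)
Your overall strategy coincides with the paper's: parts $(1)$ and $(2)$ are handled exactly as in the paper (compute $\pd$ of the simples and invoke Theorem \ref{main1.1}(1) for $A_{n,1}$; self-injectivity plus finite representation type and Remark \ref{TSC} for $A_{n,n}$), and for part $(3)$ the paper likewise classifies the indecomposables as $L(i,k)=P_i/J^kP_i$, splits into cases according to finite projective dimension, periodicity, and ``eventual periodicity,'' and applies Theorem \ref{main1.1}(4) with explicit self-chain-maps of the minimal projective resolution. So the plan is sound.

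The issue is that for part $(3)$ you have only stated the plan, and the step you yourself flag as the hard part --- ``locating the syzygy periodicity uniformly in $n$ and the $\lambda_j$'' --- is precisely where all the content of the paper's proof lives (its Lemmas \ref{basic2}, \ref{pdfinite} and \ref{Calculation}). Concretely, what is missing is: the formula $\ell(P_i)=n+\inf\{r\geqslant 0\mid \sigma^r(i)\in\Delta\}$ for the Loewy lengths, the syzygy formula $\Omega(L(i,k))\simeq L(\sigma^k(i),\ell(P_i)-k)$, the observation that $L(i,k)$ with $0<k<n$ is periodic of period $2$ exactly when $i,\sigma^k(i)\in\Delta$, and the resulting four-way case analysis producing the chain maps $g_\bullet$ with shift $n'=2$ and $m'\in\{1,2\}$. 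Without these the argument is not yet a proof. One further remark: for the lower bound $\qgd(A_{n,m})\geqslant 2$ you propose the socle-embedding argument via Corollary \ref{general 3.4}(2), which would require classifying all modules of projective dimension at most $1$; the paper avoids this by exhibiting a module of \emph{finite} projective dimension exactly $2$ (namely $L(i,n)$ for any $i$ with $\ell(P_i)>n$, which exists since $m<n$), whence $\qpd=\pd=2$ directly by Proposition \ref{qp=p1}. Your route is not wrong, but the paper's is strictly simpler and you may want to adopt it.
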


To show Theorem \ref{main}, we introduce some notation and establish several lemmas.

Let $J$ be the Jacobson radical of the algebra $A_{n,m}$. The Loewy length of an $A_{n,m}$-module $M$ is denote by $$\ell(M)\coloneqq\min\{s\in\mathbb{N}^{+}\,|\,J^sM=0\}.$$ Let $S_i$ and $P_i$ be the simple and indecomposable projective $A_{n,m}$-module corresponding to each vertex $i\in Q_0$, respectively. For simplicity, we denote by $L(i,k)$ the indecomposable $A_{n,m}$-module that has top $S_i$ and Loewy length $k$. In particular, $L(i,1)=S_i$. Moreover, we set $\Delta\coloneqq\{\lambda_1,\cdots\lambda_m\}$ and
define a bijection $\sigma:\{1,\cdots,n\}\rightarrow\{1,\cdots,n\}$ by \[\sigma(i)=
\begin{cases}
i+1 & \text{for $1\leqslant i\leqslant n-1$,}\\
1& \text{for $i=n.$}
\end{cases}\]
For $r\geqslant 1$, we denote the compositions $\underbrace{\sigma  \cdots  \sigma}_{r \text{ times}}$ and $\underbrace{\sigma^{-1}  \cdots  \sigma^{-1}}_{r \text{ times}}$ by $\sigma^r$ and $\sigma^{-r}$, respectively. As usual, $\sigma^0$ stands for the identity map of
$\{1,\cdots,n\}$. In the following, we always fix $i\in\{1, \dots, n\}$.

\begin{Lem}\label{basic2}
The following statements are true.

$(1)$ For any $A_{n,m}$-module $L(i,k)$, we have $0<k<2n$.

$(2)$ $\ell(P_i)=n + \inf\{r\in\mathbb{N}\mid\sigma^r(i) \in \Delta\}$. In particular,
$ n\leqslant \ell(P_i)<2n$.

$(3)$ If $n \leqslant k <\ell(P_{i})$, then $\ell(P_{\sigma^{k}(i)}) = \ell(P_{i}) - (k - n)$.

$(4)$ If $0<k < \ell(P_{i})$, then $\Omega(L(i,k)) \simeq L\big(\sigma^{k}(i),\ell(P_{i})-k\big)$.
\end{Lem}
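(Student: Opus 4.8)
The plan is to work directly with the combinatorics of the Nakayama algebra $A_{n,m}=KQ/\langle\rho_{\lambda_1},\dots,\rho_{\lambda_m}\rangle$, using the standard fact that over a Nakayama algebra every indecomposable module is uniserial, of the form $L(i,k)=P_i/J^kP_i$, and is completely determined by its top $S_i$ and its Loewy length $k$. The structure of the projectives comes from reading paths in the cyclic quiver $Q$: starting at vertex $i$, the composition series of $P_i$ runs through the vertices $i,\sigma(i),\sigma^2(i),\dots$ until a relation $\rho_{\lambda_j}$ forces a zero. Since each $\rho_{\lambda_j}$ is a path of length $n$ (one full loop around the $n$-cycle) starting at vertex $\lambda_j$, a path starting at $i$ becomes zero precisely when it has traversed $n$ arrows \emph{and} has reached a vertex $\lambda_j\in\Delta$ at the appropriate point; more precisely, the first relation encountered is $\rho_{\sigma^r(i)}$ where $r=\inf\{r\geqslant 0\mid \sigma^r(i)\in\Delta\}$, and this kills the path of length $n+r$. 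This is exactly the content of $(2)$: $\ell(P_i)=n+\inf\{r\in\mathbb N\mid\sigma^r(i)\in\Delta\}$, and since $0\leqslant r\leqslant n-1$ (some rotate of $i$ lies in the nonempty set $\Delta\subseteq\{1,\dots,n\}$, and $r=n$ would already have been caught at $r=0$ up to relabelling — more carefully, $r<n$ because $\sigma^n=\mathrm{id}$ so if $\sigma^r(i)\in\Delta$ for some $r$ then it happens for some $r\in\{0,\dots,n-1\}$), we get $n\leqslant\ell(P_i)<2n$.

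For $(1)$: any $L(i,k)$ is a quotient $P_i/J^kP_i$ with $1\leqslant k\leqslant\ell(P_i)$, so $0<k\leqslant\ell(P_i)<2n$ by $(2)$; this is immediate once $(2)$ is in hand. For $(3)$: if $n\leqslant k<\ell(P_i)$, then writing $\ell(P_i)=n+r$ with $r=\inf\{r\mid\sigma^r(i)\in\Delta\}$, the hypothesis says $n\leqslant k<n+r$, i.e. $0\leqslant k-n<r$. We must compute $\ell(P_{\sigma^k(i)})=n+\inf\{t\mid\sigma^t(\sigma^k(i))\in\Delta\}=n+\inf\{t\mid\sigma^{k+t}(i)\in\Delta\}$. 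Since $k-n<r$ and $r$ is the \emph{minimal} rotation of $i$ landing in $\Delta$, none of $\sigma^{k-n}(i),\sigma^{k-n+1}(i),\dots$ up to $\sigma^{r-1}(i)$... here I must be careful: the indices $k+t$ need to be reduced mod $n$. The cleanest way is to observe that $\sigma^k(i)=\sigma^{k-n}(i)$ is meaningless unless we reduce, but $\sigma^n=\mathrm{id}$, so $\sigma^k(i)=\sigma^{k \bmod n}(i)$, and with $n\leqslant k<n+r\leqslant 2n-1$ we have $k\bmod n=k-n$, so $\sigma^k(i)=\sigma^{k-n}(i)$. Then $\inf\{t\geqslant 0\mid\sigma^t(\sigma^{k-n}(i))\in\Delta\}=\inf\{t\geqslant 0\mid\sigma^{t+(k-n)}(i)\in\Delta\}=r-(k-n)$, because $r$ is minimal for $i$ and $k-n<r$. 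Hence $\ell(P_{\sigma^k(i)})=n+r-(k-n)=\ell(P_i)-(k-n)$, as claimed.

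Finally $(4)$: for $0<k<\ell(P_i)$ the module $L(i,k)=P_i/J^kP_i$ is a proper quotient of $P_i$, so its projective cover is $P_i\twoheadrightarrow L(i,k)$ and $\Omega(L(i,k))=J^kP_i$. Now $J^kP_i$ is uniserial (submodule of the uniserial $P_i$), with top $S_{\sigma^k(i)}$ (the $(k+1)$-st layer of $P_i$) and Loewy length $\ell(P_i)-k$ (the number of layers of $P_i$ below level $k$). Therefore $\Omega(L(i,k))\simeq L(\sigma^k(i),\ell(P_i)-k)$. The main obstacle, and the step deserving the most care, is the bookkeeping in $(2)$ and $(3)$: one must pin down precisely which relation $\rho_{\lambda_j}$ is ``hit first'' when reading paths out of vertex $i$, and handle the modular reduction of exponents of $\sigma$ so that the infimum in $(3)$ is computed over the correct range; the boundary hypothesis $k<\ell(P_i)$ in $(3)$ and $(4)$ is exactly what guarantees $k\bmod n=k-n$ and that $\Omega(L(i,k))\neq 0$, so it must not be dropped. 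Everything else is a routine consequence of the uniserial structure of modules over Nakayama algebras.
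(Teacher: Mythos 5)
Your proposal is correct and follows essentially the same route as the paper: identify $\ell(P_i)$ by locating the first rotate $\sigma^r(i)$ landing in $\Delta$ (which is exactly the paper's part (2)), reduce exponents modulo $n$ for part (3), and read off $\Omega(L(i,k))=J^kP_i$ from the uniserial structure for part (4). The only differences are cosmetic — you deduce (1) from (2) rather than proving it directly by exhibiting a path of length $2n-1$ containing every $\rho_j$, and you argue (4) via Loewy layers instead of the paper's explicit surjection $P_{\sigma^k(i)}\twoheadrightarrow J^ke_i$ — neither of which affects correctness.
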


\begin{proof}
$(1)$ Clearly, $k > 0$. Conversely, assume $k \geqslant 2n$. Since $L(i,k)$ is a quotient of $P_i$, we have $\ell(P_{i}) \geqslant k\geqslant 2n$. This implies that the path
$p_i:=\alpha_{\sigma^{n-2}(i)} \cdots\alpha_{\sigma(i)}\alpha_{i} \alpha_{\sigma^{n-1}(i)}\cdots  \alpha_{\sigma(i)} \alpha_i$ is nonzero in $A_{n,m}$. In the quiver $Q$, any path of length $n$ (for example, $\rho_j$ for $1\leqslant j\leqslant n$) is a subpath of $p_i$. Thus $\rho_j \neq 0$ in $A_{n,m}$, which is a contradiction to the definition of $A_{n,m}$.

$(2)$ By definition of $A_{n,m}$, we have $\ell(P_{i}) \geqslant n$. Clearly, $\ell(P_{i}) = n$ if and only if $i \in \Delta$. Suppose $\ell(P_{i}) = n + r$ for some $r > 0$. Then
\[\alpha_{\sigma^{n+r-2}(i)}  \cdots  \alpha_{\sigma(i)}  \alpha_i \neq 0\quad\text{and}\quad\alpha_{\sigma^{n+r-1}(i)}\alpha_{\sigma^{n+r-2}(i)}  \cdots  \alpha_{\sigma(i)}  \alpha_i =0.\]This implies \[\alpha_{\sigma^{n+r-2}(i)}  \cdots  \alpha_{\sigma^r(i)} \neq 0\quad\text{and}\quad\alpha_{\sigma^{n+r-1}(i)}\alpha_{\sigma^{n+r-2}(i)}  \cdots  \alpha_{\sigma^r(i)}=0.\]
Thus $\sigma^r(i)\in\Delta$.

We now prove that if there exists an integer $t>0$ such that $\sigma^{t}(i) \in \Delta$, then $t \geqslant r$.

In fact, if $t< r$, then $\alpha_{\sigma^{n+r-2}(i)}  \cdots   \alpha_{\sigma^r(i)}\cdots\alpha_{\sigma^t(i)} \neq 0$, and therefore $n+r-2-t+1=n+(r-t)-1\geqslant n$, which contradicts  $\sigma^t(i)\in\Delta$.

$(3)$ Assume $\ell(P_{i}) = n + r$ with $r\in\mathbb{N^{+}}$. Then $\alpha_{\sigma^{n+r-2}(i)} \cdots  \alpha_{\sigma(i)}  \alpha_i\neq 0$. By $(1)$, $r < n$. By $(2)$, $\sigma^r(i) \in \Delta$. It follows that \[\alpha_{\sigma^{n+r-1}(i)}\alpha_{\sigma^{n+r-2}(i)}  \cdots\alpha_{\sigma^n(i)}\cdots \alpha_{\sigma^r(i)} =0.\]
The case $k = n$ is clear. Now assume $k = n + s$ with $s>0$. Then $s<r$. This implies
\[\alpha_{\sigma^{n+r-2}(i)}  \cdots  \alpha_{\sigma^r(i)} \cdots \alpha_{\sigma^{s}(i)}\neq 0\quad\text{and}\quad\alpha_{\sigma^{n+r-1}(i)}\alpha_{\sigma^{n+r-2}(i)}  \cdots\alpha_{\sigma^r(i)}\cdots \alpha_{\sigma^s(i)} =0.\]
Thus $(3)$ holds.

$(4)$ Note that $\Omega(L(i,k))=J^{k}e_{i}$. Let \(f \colon P_{\sigma^{k}(i)} \to J^{k}e_i\) be the \(A_{n,m}\)-module homomorphism induced by $e_{\sigma^k(i)}\mapsto\alpha_{\sigma^{k-1}(i)}\alpha_{\sigma^{k-2}(i)}\cdots\alpha_i$. Then \(\Ker f = J^{\ell(P_i) - k}e_{\sigma^{k}(i)}\) and thus $(4)$ holds.
\end{proof}

\begin{Lem}\label{pdfinite}
Let $n \leqslant k <\ell(P_{i})$. Then $\qpd_{A_{n,m}}(L(i,k))=\pd_{A_{n,m}}(L(i,k))=2$.
\end{Lem}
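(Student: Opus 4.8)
The plan is to first pin down $\pd_{A_{n,m}}(L(i,k)) = 2$ by an explicit syzygy computation using Lemma \ref{basic2}(4), and then invoke Theorem \ref{main1.1}(1) to transfer the equality to the quasi-projective dimension. Concretely, fix $i$ and $k$ with $n\leqslant k<\ell(P_i)$, and write $\ell(P_i)=n+r$ with $1\leqslant r<n$ (the strict bound $r<n$ coming from Lemma \ref{basic2}(1), since $\ell(P_i)<2n$). First I would apply Lemma \ref{basic2}(4) to $L(i,k)$: since $0<k<\ell(P_i)$, we get $\Omega(L(i,k))\simeq L(j,\ell(P_i)-k)$ where $j=\sigma^k(i)$. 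Note $\ell(P_i)-k = n+r-k$, and because $n\leqslant k<n+r$ we have $0 < \ell(P_i)-k \leqslant r < n$, so the first syzygy is a module of Loewy length strictly less than the Loewy length of its own projective cover $P_j$ — in particular it is non-projective, so $\pd_{A_{n,m}}(L(i,k))\geqslant 1$, and we must compute one more syzygy.

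Next I would compute $\Omega^2(L(i,k)) \simeq \Omega(L(j, \ell(P_i)-k))$. Here the key point is to check that $L(j,\ell(P_i)-k)$ has projective dimension exactly $1$, i.e.\ that its first syzygy is projective. By Lemma \ref{basic2}(4) again (applicable since $0<\ell(P_i)-k<\ell(P_j)$, the latter because $\ell(P_j)\geqslant n > \ell(P_i)-k$), we have $\Omega\big(L(j,\ell(P_i)-k)\big)\simeq L\big(\sigma^{\ell(P_i)-k}(j),\ \ell(P_j)-(\ell(P_i)-k)\big)$. Now I would use Lemma \ref{basic2}(3): applying it with the index $i$ and exponent $k$ (valid since $n\leqslant k<\ell(P_i)$) gives $\ell(P_j) = \ell(P_{\sigma^k(i)}) = \ell(P_i)-(k-n) = n + r - (k-n) = 2n + r - k$. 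Hence $\ell(P_j) - (\ell(P_i)-k) = (2n+r-k) - (n+r-k) = n$. So the second syzygy is $L(\ell,n)$ for the appropriate vertex $\ell = \sigma^{\ell(P_i)-k}(j)$, and a module of Loewy length $n$ whose top sits at a vertex $\ell$ is exactly the projective $P_\ell$ provided $\ell\in\Delta$ — which holds by Lemma \ref{basic2}(2), since $\ell(P_\ell) = \ell(P_j)-((\ell(P_i)-k)-n)$... I would instead argue more directly: $L(\ell,n)$ is a quotient of $P_\ell$ of Loewy length $n$, and $\ell(P_\ell)\geqslant n$ always; to conclude $L(\ell,n)\simeq P_\ell$ it suffices that $\ell(P_\ell)=n$, equivalently $\ell\in\Delta$, which follows because $\Omega\big(L(j,\ell(P_i)-k)\big)$ is a submodule of the projective $P_{\sigma^{\ell(P_i)-k}(j)}$ of the form $J^{\ell(P_i)-k}e_{\sigma^{\ell(P_i)-k}(j)}$ and this submodule has Loewy length $n$ forcing its ambient projective to have Loewy length exactly $n$. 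Thus $\Omega^2(L(i,k))$ is projective and $\pd_{A_{n,m}}(L(i,k))=2$.

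Finally, since $\pd_{A_{n,m}}(L(i,k)) = 2 < \infty$, Theorem \ref{main1.1}(1) gives $\qpd_{A_{n,m}}(L(i,k)) = \pd_{A_{n,m}}(L(i,k)) = 2$, completing the proof. The main obstacle is the bookkeeping in the second step: one must carefully verify that all the applications of Lemma \ref{basic2}(3) and (4) are within their stated ranges (the inequalities $0<\ell(P_i)-k<\ell(P_j)$ and $n\leqslant k<\ell(P_i)$), and that the Loewy length of the second syzygy lands exactly on $n$ so that it is genuinely projective rather than merely a proper quotient of a projective — this is where the precise formula $\ell(P_{\sigma^k(i)}) = \ell(P_i)-(k-n)$ from Lemma \ref{basic2}(3) is essential and must be applied with the right indices.
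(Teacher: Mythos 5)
Your overall strategy is exactly the paper's: compute $\Omega^2(L(i,k))$ via Lemma \ref{basic2}(3)(4), show it is projective, conclude $\pd_{A_{n,m}}(L(i,k))=2$, and transfer the equality to quasi-projective dimension by Theorem \ref{main1.1}(1) (i.e.\ Proposition \ref{qp=p1}). The syzygy bookkeeping up to the identification $\Omega^2(L(i,k))\simeq L\big(\sigma^{n+r-k}(\sigma^k(i)),n\big)=L(\sigma^{n+r}(i),n)$ is correct, as is the non-projectivity of the first syzygy. But the final step --- projectivity of the second syzygy --- is where your argument breaks down. Writing $j=\sigma^k(i)$ and $s=\ell(P_i)-k$, you assert that $\Omega(L(j,s))$ is ``a submodule of the projective $P_{\sigma^{s}(j)}$ of the form $J^{s}e_{\sigma^{s}(j)}$'' whose Loewy length $n$ ``forces its ambient projective to have Loewy length exactly $n$''. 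Both halves are off. The syzygy is $J^{s}e_j$, a submodule of $P_j$; its isomorphic copy $L(\sigma^s(j),n)$ is a \emph{quotient} of $P_{\sigma^s(j)}$, not the submodule $J^{s}e_{\sigma^{s}(j)}$. And the ambient projective $P_j$ has Loewy length $2n+r-k>n$ for every $k$ in the range $n\leqslant k<n+r$, so the claimed ``forcing'' is false: a submodule of $P_j$ having Loewy length $n$ only tells you $\ell(P_j)=n+s$, which you already knew. (Your earlier, abandoned attempt to apply Lemma \ref{basic2}(3) with exponent $s$ is also out of range, since $s=n+r-k\leqslant r<n$.)

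The missing ingredient is Lemma \ref{basic2}(2) applied to the vertex $i$ itself: $\ell(P_i)=n+r$ means $r$ is the least $t\geqslant 0$ with $\sigma^t(i)\in\Delta$, so $\sigma^r(i)\in\Delta$; since $\sigma^n$ is the identity, the top of $\Omega^2(L(i,k))$ sits at $\sigma^{n+r}(i)=\sigma^r(i)$, whence $\ell(P_{\sigma^{n+r}(i)})=n$ and $L(\sigma^{n+r}(i),n)=P_{\sigma^r(i)}$ is projective. This is precisely how the paper closes the argument. With that one sentence substituted for your ``more direct'' claim, your proof is complete and coincides with the paper's.
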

\begin{proof}
 Let $\ell(P_{i}) = n + r$ with $r>0$. By Lemma \ref{basic2}(2), \( i \notin \Delta \). Since $k <\ell(P_{i})$, we see from Lemma \ref{basic2}(4) that $\Omega(L(i,k))\simeq L(\sigma^{k}(i),n+r-k)$. By  $k\geqslant n$ and Lemma \ref{basic2}(3), $\ell(P_{\sigma^{k}(i)})=n+r-(k-n)=2n+r-k$. Since $2n+r-k>n+r-k$, it is clear that $L(\sigma^{k}(i),n+r-k)$ is non-projective. Thus $\Omega(L(i,k))$ is non-projective.
 Now, there is a series of isomorphisms:
 \[\Omega^2(L(i,k))\simeq \Omega(L(\sigma^{k}(i),n+r-k))\simeq L(\sigma^{n+r}(i),(2n+r-k)-(n+r-k))=L(\sigma^{n+r}(i),n).
 \] Note that $\sigma^{n+r}(i)=\sigma^r(i)$. By Lemma \ref{basic2}(2), $\sigma^r(i)\in\Delta$, which forces $\ell(P_{\sigma^{n+r}(i)})=n$. This implies that  $\Omega^2(L(i,k))$ is projective, and therefore
 $\pd_{A_{n,m}}(L(i,k))=2$.  By Proposition \ref{qp=p1}, $\qpd_{A_{n,m}}(L(i,k))=\pd_{A_{n,m}}(L(i,k))$.
\end{proof}

\begin{Lem}\label{Calculation}
Let $0 < k < n$. Then

\[\qpd_{A_{n,m}}(L(i,k))\leqslant
\begin{cases}
0& \text{if $\,i,\,\sigma^k(i)\in\Delta$;}\\
1& \text{if $\,i\notin\Delta,\, \sigma^k(i)\in\Delta$;}\\
2& \text{if $\,\sigma^k(i)\notin\Delta$.}
\end{cases}\]
\end{Lem}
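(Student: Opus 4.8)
The plan is to analyze the syzygy $\Omega\big(L(i,k)\big)$ in each of the three cases using the structure results in Lemma \ref{basic2}, and then invoke the general quasi-projective dimension bounds already available. Throughout, write $\ell(P_i)=n+r_i$ with $r_i\coloneqq\inf\{r\in\mathbb{N}\mid\sigma^r(i)\in\Delta\}$, so that $i\in\Delta$ is equivalent to $r_i=0$, i.e. $\ell(P_i)=n$, by Lemma \ref{basic2}(2). Since $0<k<n\leqslant\ell(P_i)$, Lemma \ref{basic2}(4) applies and gives $\Omega\big(L(i,k)\big)\simeq L\big(\sigma^k(i),\,\ell(P_i)-k\big)$.

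For the first case, $i,\sigma^k(i)\in\Delta$: here $\ell(P_i)=n$, so $\Omega\big(L(i,k)\big)\simeq L\big(\sigma^k(i),n-k\big)$. I would observe that $L\big(\sigma^k(i),n-k\big)$ is a top quotient of $P_{\sigma^k(i)}$, and since $\sigma^k(i)\in\Delta$ means $\ell(P_{\sigma^k(i)})=n$, iterating the syzygy formula should produce a periodic module: applying Lemma \ref{basic2}(4) repeatedly to $L(\sigma^k(i),n-k)$ cycles the Loewy length back up and returns to $L(i,k)$ after a bounded number of steps (the vertices traversed all lie in a segment controlled by $\Delta$, and the Loewy lengths stay equal to $n$ at the $\Delta$-vertices). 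Once $L(i,k)$ is shown to be periodic, Theorem \ref{QPP}(3) gives $\qpd_{A_{n,m}}\big(L(i,k)\big)=0$. I expect the bookkeeping here — verifying that the chain of syzygies actually closes up — to be the most delicate point, and I would handle it by tracking the pair (top vertex, Loewy length) and showing it is forced to repeat.

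For the second case, $i\notin\Delta$ but $\sigma^k(i)\in\Delta$: now $\ell(P_{\sigma^k(i)})=n$, and $\Omega\big(L(i,k)\big)\simeq L\big(\sigma^k(i),\,\ell(P_i)-k\big)$ with $\ell(P_i)-k = n+r_i-k$. Since $\sigma^k(i)\in\Delta$, I would argue $\Omega\big(L(i,k)\big)$ is itself periodic by the same cycling argument as in case one (it is a quotient of a projective whose top vertex lies in $\Delta$), hence has $\qpd=0$ by Theorem \ref{QPP}(3); then Corollary \ref{summand} is not available (the category is not Frobenius in general), so instead I use the fact that $\Omega\big(L(i,k)\big)$ having finite $\qpd$ forces, via a finite quasi-projective resolution of it placed one step to the right together with the projective cover sequence $0\to\Omega(L(i,k))\to P_i\to L(i,k)\to 0$, the bound $\qpd_{A_{n,m}}\big(L(i,k)\big)\leqslant\qpd_{A_{n,m}}\big(\Omega(L(i,k))\big)+1 = 1$. (Note that we cannot cite Theorem \ref{main1.1}(3) directly unless $P_i$ is injective, so I would instead splice a finite quasi-projective resolution of $\Omega(L(i,k))$ onto the stalk $P_i$ by hand, exactly as in the shifting discussion after Definition \ref{def3.1(1)}.)

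For the third case, $\sigma^k(i)\notin\Delta$: then $\Omega\big(L(i,k)\big)\simeq L\big(\sigma^k(i),\ell(P_i)-k\big)$ with $\sigma^k(i)\notin\Delta$. I would split on whether $\ell(P_i)-k\geqslant n$ or $<n$. If $\ell(P_i)-k\geqslant n$, then $n\leqslant \ell(P_i)-k<\ell(P_{\sigma^k(i)})$ (the latter because $\ell(P_{\sigma^k(i)})>n$ as $\sigma^k(i)\notin\Delta$, and in fact one checks $\ell(P_i)-k<\ell(P_{\sigma^k(i)})$ from $r_i<n$), so Lemma \ref{pdfinite} gives $\pd_{A_{n,m}}\big(\Omega(L(i,k))\big)=\qpd_{A_{n,m}}\big(\Omega(L(i,k))\big)=2$ — wait, I would rather apply Lemma \ref{pdfinite} one step earlier by noting $\Omega(L(i,k))$ has finite projective dimension $2$, so $L(i,k)$ has $\pd\leqslant 3$; this does not immediately give the bound $2$. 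So instead, in the subcase $\ell(P_i)-k\geqslant n$, I apply Lemma \ref{pdfinite} directly to conclude $\qpd = 2$ only if $k$ itself satisfies $n\leqslant k$, which it does not. The cleaner route: show $\Omega\big(L(i,k)\big)\simeq L\big(\sigma^k(i),\ell(P_i)-k\big)$ has $\qpd\leqslant 1$ (recursively, since its top vertex may or may not be in $\Delta$, but its Loewy length has dropped), and then add $1$. Concretely I would set up a downward induction on $n-k$: if $\sigma^k(i)\in\Delta$ we are in case two for the syzygy and get $\qpd\big(\Omega(L(i,k))\big)\leqslant 1$; if $\sigma^k(i)\notin\Delta$ and $\ell(P_i)-k<n$, induction applies to $\Omega(L(i,k))$; if $\ell(P_i)-k\geqslant n$, Lemma \ref{pdfinite} applies after one more syzygy step, giving $\qpd\big(\Omega(L(i,k))\big)\leqslant 2$ but with the second syzygy having finite projective dimension, hence $\qpd\big(\Omega(L(i,k))\big)= \pd=2$ is too big — so actually in that last subcase I get $\qpd_{A_{n,m}}\big(L(i,k)\big)\leqslant \pd_{A_{n,m}}\big(L(i,k)\big)$ directly and must show this is $2$, which follows because $\Omega^2\big(L(i,k)\big)$ lands in the range of Lemma \ref{pdfinite} or becomes projective. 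The main obstacle, and the place I would spend the most care, is this case-three subcase analysis: pinning down exactly when the Loewy length of the successive syzygies crosses the threshold $n$, and checking in the boundary subcase that two syzygy steps suffice to reach a projective or a module covered by Lemma \ref{pdfinite}, so that $\pd_{A_{n,m}}\big(L(i,k)\big)=2$ and hence $\qpd\leqslant 2$ by Proposition \ref{qp=p1}.
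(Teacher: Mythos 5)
Your Case 1 is essentially the paper's: show $\Omega^2\bigl(L(i,k)\bigr)\simeq L(i,k)$ and invoke Theorem \ref{QPP}(3). The subcases where a syzygy becomes projective (so that $\pd_{A_{n,m}}\bigl(L(i,k)\bigr)\leqslant 2$ and Proposition \ref{qp=p1} applies) are also fine. But the core of your argument in Cases 2 and 3 rests on the inequality $\qpd_{A_{n,m}}\bigl(L(i,k)\bigr)\leqslant\qpd_{A_{n,m}}\bigl(\Omega(L(i,k))\bigr)+1$, obtained by ``splicing a finite quasi-projective resolution of $\Omega(L(i,k))$ onto the stalk $P_i$ by hand.'' This is a genuine gap: that inequality is \emph{false} in general, and the paper's Example \ref{Counterexample} is an explicit counterexample (there $\Omega(S_1)=S_2$ is periodic with $\qpd_A(S_2)=0$, yet $\qpd_A(S_1)=\infty$). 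The splicing cannot work because a quasi-projective resolution $Q_{\bullet}$ of $N=\Omega(M)$ has \emph{all} of its homologies isomorphic to powers of $N$, not of $M$; to convert it into a quasi-projective resolution of $M$ one must map each $\Ker(d_i^Q)\twoheadrightarrow N^{a_i}$ into $P_i^{a_i}$ and lift to the projective term, which requires the middle term of $0\to N\to E\to M\to 0$ to be \emph{injective} --- exactly the hypothesis of Theorem \ref{main1.1}(3) that you correctly note is unavailable here, since the indecomposable projectives of $A_{n,m}$ are not injective unless $m=n$.

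The missing idea is Theorem \ref{main1.1}(4). The paper's route in Cases 2--4 (when no syzygy is projective) is: show that $\Omega\bigl(L(i,k)\bigr)$ (resp.\ $\Omega^2\bigl(L(i,k)\bigr)$) is periodic of period $2$, use the explicit resolution $(*)$ of the periodic tail to write down a chain map $g_{\bullet}\colon P_{\bullet}\to P_{\bullet}[2]$ on the full projective resolution of $L(i,k)$ that is the identity in high degrees and induces an isomorphism $\Omega^{2+m}\bigl(L(i,k)\bigr)\to\Omega^{m}\bigl(L(i,k)\bigr)$ with $m=1$ (resp.\ $m=2$); the mapping cone of the truncation is then a finite quasi-projective resolution of $L(i,k)$ itself, yielding $\qpd\leqslant m$. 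Your closing suggestion in Case 3 --- that $\Omega^2\bigl(L(i,k)\bigr)$ lands in the range of Lemma \ref{pdfinite} or becomes projective, forcing $\pd_{A_{n,m}}\bigl(L(i,k)\bigr)=2$ --- also does not hold: in the generic subcase $\Omega^2\bigl(L(i,k)\bigr)$ is periodic and non-projective, so $\pd_{A_{n,m}}\bigl(L(i,k)\bigr)=\infty$ and no bound via Proposition \ref{qp=p1} is possible; the bound $\qpd\leqslant 2$ there can only come from the mapping-cone construction of Theorem \ref{main1.1}(4).
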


\begin{proof}
Since $0<k<n$, we have $\Omega(L(i,k))\simeq L\big(\sigma^{k}(i),\ell(P_{i})-k\big)$ by Lemma \ref{basic2}(2)(4). In the following, we divide the proof of Lemma \ref{Calculation} into four cases.

\textbf{Case 1.} $i,\,\sigma^{k}(i)\in\Delta$.

By Lemma \ref{basic2}(2), $\ell(P_{i})=\ell(P_{\sigma^{k}(i)})=n$ and $L\big(\sigma^{k}(i),\ell(P_{i})-k\big)=L(\sigma^k(i),n-k)$ which is non-projective. It follows from
Lemma \ref{basic2}(4) that \[\Omega^{2}(L(i,k))\simeq\Omega(L(\sigma^k(i),n-k))\simeq L(\sigma^{n}(i),n-(n-k))=L(i,k).\]
Thus $L(i,k)$ is periodic. By Theorem \ref{QPP}(3),  $\qpd_{A_{n,m}}(L(i,k))=0$.

In the later proof, we will use the following (minimal) projective resolution of the periodic module $L(i,k)$ for several times:
\[\begin{tikzcd}
\cdots \arrow[r] & P_{\sigma^{k}(i)} \arrow[r, "g"] & P_{i} \arrow[r, "h"] & P_{\sigma^{k}(i)} \arrow[r, "g"] & P_{i} \arrow[r] & {L(i,k)} \arrow[r] & 0\tag*{$(*)$}
\end{tikzcd}\]where $g$ and $h$ are induced by $e_{\sigma^k(i)}\mapsto\alpha_{\sigma^{k-1}(i)}\cdots\alpha_{\sigma(i)}\alpha_i$ and $e_i\mapsto\alpha_{\sigma^{n-1}(i)}\cdots\alpha_{\sigma^{k+1}(i)}\alpha_{\sigma^{k}(i)}$, respectively.

\textbf{Case 2.} $i\not\in\Delta$ and $\sigma^{k}(i)\in\Delta$.

By Lemma $\ref{basic2}(2)$, $\ell(P_{i}) = n + r$ for some $r\in\mathbb{N}^{+}$ such that $\sigma^{r}(i)\in\Delta$ and $k\geqslant r$.

If $k=r$, then $L(\sigma^{k}(i),n+r-k)=L(\sigma^{r}(i), n)$ which is projective by Lemma $\ref{basic2}(2)$. In this case, $\qpd_{A_{n,m}}(L(i,k))=\pd_{A_{n,m}}(L(i,k))=1$, due to Proposition \ref{qp=p1}.

Suppose $k>r$. Then $\sigma^{k}(i)\in\Delta$ and $\sigma^{n+r-k}(\sigma^k(i))=\sigma^{n+r}(i)=\sigma^r(i)\in\Delta$. By Case $1$, $L(\sigma^{k}(i),n+r-k)$ is periodic, and
so is $\Omega(L(i,k))$. Applying $(*)$ to $\Omega(L(i,k))$, we can construct a projective resolution of $L(i,k)$ as follows: \[\begin{tikzcd}
\cdots \arrow[r, "g_{1}"] & P_{\sigma^k(i)} \arrow[r, "h_{1}"] & P_{\sigma^{r}(i)} \arrow[r, "g_{1}"] & P_{\sigma^{k}(i)} \arrow[r, "f"] & P_{i} \arrow[r] & {L(i,k)} \arrow[r] & 0
\end{tikzcd}\]where $f$, $g_1$ and $h_1$ are induced by
$$e_{\sigma^{k}(i)}\mapsto \alpha_{\sigma^{k-1}(i)}\cdots\alpha_{\sigma(i)}\alpha_{i},\quad e_{\sigma^{r}(i)}\mapsto \alpha_{\sigma^{n+r-1}(i)}\cdots\alpha_{\sigma^{k+1}(i)}\alpha_{\sigma^{k}(i)},\quad
e_{\sigma^{k}(i)}\mapsto \alpha_{\sigma^{k-1}(i)}\cdots\alpha_{\sigma^{r+1}(i)}\alpha_{\sigma^{r}(i)},$$
respectively. Let $\beta: P_{\sigma^{r}(i)}\to P_i$ be the $A_{n,m}$-module homomorphism induced by
 $e_{\sigma^{r}(i)}\mapsto \alpha_{\sigma^{r-1}(i)}\cdots\alpha_{\sigma(i)}\alpha_{i}$. Then we obtain the following commutative diagram:
\[\begin{tikzcd}
\cdots \arrow[r, "h_{1}"] & P_{\sigma^{r}(i)} \arrow[r, "g_{1}"] \arrow[d, no head, shift right] \arrow[d, no head] & P_{\sigma^{k}(i)} \arrow[r, "h_{1}"] \arrow[d, no head, shift right] \arrow[d, no head] & P_{\sigma^{r}(i)} \arrow[r, "g_{1}"] \arrow[d, "\beta"] & P_{\sigma^{k}(i)} \arrow[r, "f"] \arrow[d] & P_{i} \arrow[r] \arrow[d] & 0 \arrow[d] \\
\cdots \arrow[r, "h_{1}"] & P_{\sigma^r(i)} \arrow[r, "g_{1}"]                                                    & P_{\sigma^{k}(i)} \arrow[r, "f"]                                                        & P_{i} \arrow[r]                                                         & 0 \arrow[r]                                & 0 \arrow[r]               & 0.
\end{tikzcd}\]
By Theorem \ref{main1.1}(4),  $\qpd_{A_{n,m}}(L(i,k))\leqslant1$.

\textbf{Case 3.} $i\in\Delta$ and $\sigma^{k}(i)\not\in\Delta$.

By Lemma $\ref{basic2}(2)$, $\ell(P_i)=n$ and $\ell(P_{\sigma^{k}(i)}) = n + r$ for some $r>0$ with $\sigma^r(\sigma^{k}(i))=\sigma^{r+k}(i)\in\Delta$. Then $\Omega(L(i,k))\simeq L\big(\sigma^{k}(i),\ell(P_{i})-k\big)=L(\sigma^{k}(i),n-k)$ which is non-projective by $n-k<n$. It follows that
\[
\Omega^2(L(i,k))\simeq \Omega(L(\sigma^{k}(i),n-k)) \simeq L(\sigma^n(i),r+k)=L(i, r+k).
\]
If $L(i, r+k)$ is projective, then $\qpd_{A_{n,m}}(L(i,k))=\pd_{A_{n,m}}(L(i,k))=2$. Now, assume that
$L(i, r+k)$ is non-projective. Since $\ell(P_i)=n$ and $L(i, r+k)$ is a quotient of $P_i$, we have $0<r+k<n$. Recall that $i\in\Delta$ and $\sigma^{r+k}(i)\in\Delta$. By Case $1$, the module $L(i, r+k)$ is periodic. Similarly, by $(*)$, we can construct a projective resolution of $L(i,k)$ as follows:
\[
\begin{tikzcd}
\cdots \arrow[r, "g_2"] & P_i \arrow[r, "h_2"] & P_{\sigma^{r+k}(i)} \arrow[r, "g_2"] & P_i \arrow[r, "f_1"] & P_{\sigma^k(i)} \arrow[r, "f_0"] & {P_{i}} \arrow[r] & L(i,k) \arrow[r] & 0,
\end{tikzcd}
\]where $f_0$, $f_1$, $g_2$ and $h_2$ are induced by
$$
e_{\sigma^k(i)}\mapsto\alpha_{\sigma^{k-1}(i)}\cdots\alpha_{\sigma(i)}\alpha_i,\quad
e_i\mapsto \alpha_{\sigma^{n-1}(i)}\cdots\alpha_{\sigma^{k+1}(i)}\alpha_{\sigma^{k}(i)},
$$
$$
e_{\sigma^{r+k}(i)}\mapsto\alpha_{\sigma^{r+k-1}(i)}\cdots\alpha_{\sigma(i)}\alpha_i,\quad e_i\mapsto\alpha_{\sigma^{n-1}(i)}\cdots\alpha_{\sigma^{r+k+1}(i)}\alpha_{\sigma^{r+k}(i)},$$
respectively. Further, let $\gamma: P_{\sigma^{r+k}(i)}\to P_{\sigma^k(i)}$ be the $A_{n,m}$-module homomorphism induced by
$e_{\sigma^{r+k}(i)}\mapsto\alpha_{\sigma^{r+k-1}(i)}\cdots\alpha_{\sigma^{k+1}(i)}\alpha_{\sigma^{k}(i)}$.
Then there is a  commutative diagram:
\[\begin{tikzcd}
\cdots \arrow[r, "g_2"] & P_i \arrow[r, "h_2"] \arrow[d, no head, shift right] \arrow[d, no head] & P_{\sigma^{r+k}(i)} \arrow[r, "g_2"] \arrow[d, "\gamma"] & P_i \arrow[r, "f_1"] \arrow[d, no head, shift right] \arrow[d, no head] & P_{\sigma^k(i)} \arrow[r, "f_0"] \arrow[d] & P_i \arrow[r] \arrow[d] & 0 \arrow[d] \\
\cdots \arrow[r, "g_2"] & P_i \arrow[r, "f_1"]                                                  & P_{\sigma^k(i)} \arrow[r, "f_0"]                     & P_i \arrow[r]                                                           & 0 \arrow[r]                                & 0 \arrow[r]             & 0.
\end{tikzcd}\]
By Theorem \ref{main1.1}(4), $\qpd_{A_{n,m}}(L(i,k))\leqslant2$.

\textbf{Case 4.} $i\not\in\Delta$ and $\sigma^{k}(i)\not\in\Delta$.

By Lemma $\ref{basic2}(2)$, $\ell(P_{i}) = n + r_1$ and $\ell(P_{\sigma^{k}(i)}) = n + r_2$ for $0<r_1<n$ and  $0<r_2<n$ such that $\sigma^{r_1}(i), \sigma^{r_2+k}(i)\in\Delta$. By Lemma $\ref{basic2}(4)$,  $\Omega(L(i,k))\simeq L(\sigma^{k}(i),n+r_1-k)$.

If either $\Omega(L(i,k))$ or  $\Omega^2(L(i,k))$ is projective, then $\pd_{A_{n,m}}(L(i,k))\leqslant2$.
In this case, $\qpd_{A_{n,m}}(L(i,k))=\pd_{A_{n,m}}(L(i,k))\leqslant 2$ by Proposition \ref{qp=p1}.

Now, assume that both $\Omega(L(i,k))$ and $\Omega^2(L(i,k))$ are non-projective.
Then $n+r_1-k<\ell(P_{\sigma^{k}(i)})=n+r_2$. It follows that
$$\Omega^2(L(i,k))\simeq \Omega\big(L(\sigma^{k}(i),n+r_1-k)\big)\simeq L(\sigma^{n+r_{1}}(i),r_{2}-r_{1}+k)=L(\sigma^{r_1}(i),r_{2}-r_1+k).$$
By $\sigma^{r_1}(i)\in\Delta$, we have $\ell(P_{\sigma^{r_1}(i)})=n$ by Lemma $\ref{basic2}(2)$.
Since $\Omega^2(L(i,k))$ is non-projective, it is clear that $0<r_2-r_1+k<n$.
Note that $\sigma^{r_1}(i)\in\Delta$ and $\sigma^{r_2-r_1+k}(\sigma^{r_1}(i))=\sigma^{r_2+k}(i)\in\Delta$. Thus $L(\sigma^{r_1}(i),r_{2}-r_{1}+k)$ is periodic by Case $1$. Still by $(*)$, we can construct a projective resolution of $L(i,k)$ as follows:
\[
\begin{tikzcd}
\cdots \arrow[r, "g_3"] & P_{\sigma^{r_{1}}(i)} \arrow[r, "h_3"] & P_{\sigma^{r_2+k}(i)} \arrow[r, "g_3"] & P_{\sigma^{r_{1}}(i)} \arrow[r, "d_1"] & P_{\sigma^{k}(i)} \arrow[r, "d_0"] & P_{i} \arrow[r] & L(i,k) \arrow[r] & 0
\end{tikzcd}\]
where $d_0, d_1, g_3$ and $h_3$ are induced by
$$
e_{\sigma^k(i)}\mapsto\alpha_{\sigma^{k-1}(i)}\cdots\alpha_{\sigma(i)}\alpha_{i},\quad e_{\sigma^{r_{1}}(i)}\mapsto\alpha_{\sigma^{n+r_{1}-1}(i)}\cdots\alpha_{\sigma^{k+1}(i)}\alpha_{\sigma^{k}(i)},
$$
$$
e_{\sigma^{r_2+k}(i)}\mapsto\alpha_{\sigma^{r_2+k-1}(i)}\cdots\alpha_{\sigma^{r_1+1}(i)}\alpha_{\sigma^{r_1}(i)},\quad e_{\sigma^{r_{1}}(i)}\mapsto\alpha_{\sigma^{n+r_{1}-1}(i)}\cdots\alpha_{\sigma^{r_2+k+1}(i)}\alpha_{\sigma^{r_2+k}(i)},
$$
Further, let $\delta: P_{\sigma^{r_2+k}(i)}\to P_{\sigma^{k}(i)}$ and $\epsilon: P_{\sigma^{r_{1}}(i)}\to P_i$ be the $A_{n,m}$-module homomorphisms induced by
$$e_{\sigma^{r_{2}+k}(i)}\mapsto\alpha_{\sigma^{r_2+k-1}(i)}\cdots\alpha_{\sigma^{k+1}(i)}\alpha_{\sigma^{k}(i)}
\quad e_{\sigma^{r_1}(i)}\mapsto\alpha_{\sigma^{r_1-1}(i)}\cdots\alpha_{\sigma(i)}\alpha_{i},$$
respectively. Then there is a commutative diagram:
\[
\begin{tikzcd}
\cdots \arrow[r, "g_3"] & P_{\sigma^{r_{1}}(i)} \arrow[r, "h_3"] \arrow[d, no head, shift right] \arrow[d, no head] & P_{\sigma^{r_2+k}(i)} \arrow[r, "g_3"] \arrow[d, "\delta"] & P_{\sigma^{r_{1}}(i)} \arrow[r, "d_1"] \arrow[d, "\epsilon"] & P_{\sigma^{k}(i)} \arrow[r, "d_0"] \arrow[d] & P_{i} \arrow[r] \arrow[d] & 0 \arrow[d] \\
\cdots \arrow[r, "g_3"] & P_{\sigma^{r_{1}}(i)} \arrow[r, "d_1"]                                                  & P_{\sigma^{k}(i)} \arrow[r, "d_0"]             & P_{i} \arrow[r]             & 0 \arrow[r]                                      & 0 \arrow[r]                              & 0.
\end{tikzcd}
\]
Thus $\qpd_{A_{n,m}}(L(i,k))\leqslant2$ by Theorem \ref{main1.1}(4).
\end{proof}

\textbf{Proof of Theorem \ref{main}.}
$(1)$ Without loss of generality, we can assume $\lambda_1=1$. It can be checked that
\[
\pd_{A_{n,m}}(S_{i}) =
\begin{cases}
2, & \text{if } i = 1, \\
1, & \text{otherwise}.
\end{cases}
\]

$(2)$ $A_{n,n}$ is a self-injective algebra of finite representation type. By Remark \ref{TSC}, $\qgd(A_{n,n})=0$.

$(3)$ Note that $\{L(i,k)\mid 1\leqslant i\leqslant n, 1\leqslant k\leqslant \ell(P_i)\}$ is the set of
isomorphism classes of indecomposable $A_{n,m}$-modules. By Theorem \ref{QPP}(1), $$\qgd(A_{n,m})=\sup\{\qpd_{A_{n,m}}(L(i,k))\mid 1\leqslant i\leqslant n, 0< k\leqslant \ell(P_i)\}.$$
Combining Lemma \ref{pdfinite} with Lemma \ref{Calculation}, we have $\qpd_{A_{n,m}}(L(i,k))\leqslant 2$ for $1\leqslant i\leqslant n$ and $0<k\leqslant \ell(P_i)$. This implies $\qgd(A_{n,m})\leqslant 2$.
Moreover, since $\Delta\not=\{1,2,\cdots,n\}$, there exists an indecomposable projective $A_{n,m}$-module $P_i$ such that $\ell(P_i)>n$ by Lemma \ref{basic2}(2). It follows from Lemma \ref{pdfinite} that $\qpd_{A_{n,m}}(L(i,n))=\pd_{A_{n,m}}(L(i,n))=2$. Thus $\qgd(A_{n,m})=2$. Since $n>2$ and $1<m<n$, there exists an integer $i\in\Delta$ such that $\sigma^{k}(i)\in\Delta$ for some integer $k$ with $0<k<n$.
According to Case $1$ in the proof of Lemma \ref{Calculation}, the module $L(i,k)$ is periodic but non-projective. This implies $\gd(A_{n,m})=\infty$.
\hfill $\square$

\medskip
{\bf Acknowledgements.} The research work was partially supported by the National Natural Science Foundation of China (Grant 12031014).

{\footnotesize
\medskip

Hongxing Chen,

School of Mathematical Sciences  \&  Academy for Multidisciplinary Studies, Capital Normal University, 100048 Beijing,

P. R. China

{\tt Email: chenhx@cnu.edu.cn (H.X.Chen)}

\smallskip

Xiaohu Chen,

School of Mathematical Sciences, Capital Normal University, 100048
Beijing, P. R. China

{\tt Email: xiaohu.chen@cnu.edu.cn (X.H.Chen)}

\smallskip

Mengge Liu,

School of Mathematical Sciences, Capital Normal University, 100048
Beijing, P. R. China;

{\tt Email: 1602567185@qq.com (M.G.Liu)}}


\begin{thebibliography}{99}

\bibitem{2006ASS}{{\sc  I. Assem}, {\sc  D. Simson}, {\sc  A. Skowroński}, \emph{Elements of the
representation theory of associative algebras. Vol. 1. Techniques of representation theory}, London Mathematical Society Student Texts, {\bf 65}, Cambridge University Press, Cambridge, 2006.}

\bibitem{1961auslander} {{\sc  M. Auslander}, Modules over unramified regular local rings, \textit{Illinois
J. Math.} \textbf{5} (1961) 631–647.}

\bibitem{2021}{{\sc B. Briggs}, {\sc E. Grifo} and {\sc J. Pollitz}, Constructing nonproxy small test
modules for the complete intersection property, {\it Nagoya Math. J.} {\bf 246} (2022) 412-429.}

\bibitem{xcf}{{\sc H. X. Chen}, {\sc M. Fang} and {\sc C. C. Xi}, Tachikawa's second conjecture, derived
recollements, and gendo-symmetric algebras, \emph{Compos. Math.} \textbf{160} (2024) 2704-2734.}

\bibitem{2012Chen}{{\sc H. X. Chen}, {\sc S. Y. Pan} and {\sc C.C. Xi}, Inductions and restrictions for
stable equivalences of Morita type, {\it J. Pure Appl. Algebra} {\bf 216} (2012) 643-661.}

\bibitem{xc}{{\sc H. X. Chen} and {\sc C. C. Xi}, Homological theory of self-orthogonal modules,
\textit{Trans. Amer. Math. Soc.} {\bf 378} (2025) 7287-7335.}

\bibitem{G2024}{{\sc M. Gheibi}, Quasi-injective dimension, {\it J. Pure Appl. Algebra} {\bf 228} (2024),
Paper No. 107468.}

\bibitem{QPD}{{\sc M. Gheibi}, {\sc D. A. Jorgensen} and {\sc R. Takahashi}, Quasi-projective dimension,
{\it Pacific J. Math.} {\bf 312} (2021) 113-147.}

\bibitem{2024}{{\sc V. H. Jorge-p\'{e}rez}, {\sc P. Martins} and {\sc V. D. Mendoza-rubio},
A generalized depth formula for modules of finite quasi-projective dimension, arXiv:2409.08996v2, 2024.}

\bibitem{1994liu} {{\sc S. P. Liu} and {\sc R. Schulz}, The exsitence of bounded infinite  \textit{D}Tr-orbits. \textit{Proc. Amer. Math. Soc.} \textbf{122} (1994) 1003-1005.}

\bibitem{1991Rickard} {{\sc J. Rickard}, Derived equivalences as derived functors, \textit{J. London Math. Soc. (2)} \textbf{43} (1991) 37-48.}

\bibitem{NMT}{{\sc N. M. Tri}, Chouinard's formula for quasi-injective dimension,
\emph{Rev. R. Acad. Cienc. Exactas Fís. Nat. Ser. A Mat. RACSAM } \textbf{119} (2025) Paper No. 52.}

\bibitem{1994Weibel}{{\sc C. A. Weibel}, \emph{An introduction to homological algebra},  Cambridge Studies in Advanced Mathematics \textbf{38}, Cambridge University Press, Cambridge, 1994.}

\bibitem{xi18}{{\sc C. C. Xi}, Derived equivalences of algebras, \emph{Bull. Lond. Math. Soc.} \textbf{50} (2018) 945-985.}

\end{thebibliography}
\end{document}